\newtheorem{theorem}{Theorem}
\newtheorem{proposition}{Proposition}
\newcommand{\commentout}[1]{}
\newcommand{\ba}{\mathbf{a}}
\newcommand{\bb}{\mathbf{b}}
\newcommand{\be}{\mathbf{e}}
\newcommand{\bz}{\mathbf{z}}
\newcommand{\bg}{\mathbf{g}}
\newcommand{\bx}{\mathbf{x}}
\newcommand{\by}{\mathbf{y}}
\newcommand{\bbeta}{\boldsymbol{\eta}}
\newcommand{\calD}{\mathcal{D}}
\newcommand{\hF}{\widehat{F}}
\newcommand{\opt}{{\rm opt}}
\newcommand{\supp}{{\rm supp}}
\begin{document}

\title{IDENT: Identifying Differential Equations \\ with Numerical Time evolution}
\author{Sung Ha Kang \thanks{School of Mathematics, Georgia Institute of Technology. Email: kang@math.gatech.edu. Research is supported in part by Simons Foundation grant 282311 and 584960.}
\and
 Wenjing Liao
 \thanks{School of Mathematics, Georgia Institute of Technology. Email: wliao60@gatech.edu. Research is supported in part by the NSF grant DMS 1818751.}
 \and
  Yingjie Liu
  \thanks{School of Mathematics, Georgia Institute of Technology. Email: yingjie@math.gatech.edu. Research is supported in part by NSF grants DMS-1522585 and DMS-CDS\&E-MSS-1622453.} }

\maketitle

\begin{abstract}

Identifying unknown differential equations from a given set of discrete time dependent data is a challenging problem.   
A small amount of noise can make the recovery unstable, and nonlinearity and differential equations with varying coefficients add complexity to the problem. 
We assume that the governing partial differential equation (PDE) is a linear combination of a subset of a prescribed dictionary containing different  differential terms, and the objective of this paper is to find the correct coefficients.   

We propose a new direction  based on  the fundamental idea of convergence analysis of numerical PDE schemes.  We utilize Lasso for efficiency, and a performance guarantee is established based on an incoherence property.   The main contribution is to validate and correct the results by Time Evolution Error (TEE).  The new algorithm, called Identifying Differential Equations with Numerical Time evolution (IDENT), is  explored for  data with non-periodic boundary conditions, noisy data and PDEs with varying coefficients.  
From the recovery analysis of Lasso, we propose a new definition of Noise-to-Signal ratio, which better represents the level of noise in the case of PDE identification. 
We systematically analyze the effects of data generations and downsampling, and propose an order preserving denoising method called Least-Squares Moving Average (LSMA), to preprocess the given data.  For  the identification of PDEs with varying coefficients, we propose to add  Base Element Expansion (BEE) to aide the computation.  Various numerical experiments from basic tests to noisy data, downsampling effects and varying coefficients are presented.

 \end{abstract}

\section{Introduction}

Physical laws are often presented  by the means of differential equations. The original discoveries of differential equations associated with real-world physical processes typically require a good understanding of the physical laws, and supportive evidence from empirical observations. 
We consider an inverse problem of this -  from the experimental real data, how to directly recognize the underlying PDE.  We combine tools from machine learning and numerical PDEs to explore the given data and automatically identify the underlying dynamics.

Let $ \{u_{i}^n | i=1, \dots, N_1 \text{ and } n= 1, \dots, N_2\}$ be the given discrete time dependent data, where the index $i$ and $n$ represent the spacial and time discrete domain, respectively. The objective is to find the differential equation, i.e., an operator $\mathcal{F}$:
\[  u_t = \mathcal{F}(x,u,u_x,u_{xx})  \text{ such that }  u(x_i,t_n) \approx u_i^n.  \]

Recently there have been a number of important works on learning dynamical systems or differential equations. Two pioneering works can be found in \cite{bongard2007automated,schmidt2009distilling}, where symbolic regression was used to recover the underlying physical systems from experimental data. 
In \cite{brunton2016discovering}, Brunton, et al. considered the discovery of nonlinear dynamical systems with sparsity-promoting techniques. The underlying dynamical systems are assumed to be governed by a small number of active terms in a prescribed dictionary, and sparse regression is used to identify these active terms.
Various extensions of this sparse regression approach can be found in \cite{kaiser2018sparse,loiseau2018constrained,mangan2017model,rudy2017data}.
In \cite{schaeffer2017learning}, Schaeffer considered the problem of learning PDEs using spectral method, and focused on the benefit of using $L^1$ minimization for sparse coefficient recovery. 
Highly corrupted and undersampled data are considered in \cite{tran2017exact,schaeffer2018extracting} for the recovery of dynamical systems.
In \cite{schaeffer2018extracting}, Schaeffer et al. developed a random sampling theory for the selection dynamical systems from undersampled data. 
These nice series of works focused on the benefit and power of using $L^1$ minimization to resolve dynamical systems or PDEs with certain sparse pattern \cite{schaeffer2013sparse}. 
A Bayesian approach was considered in  \cite{zhang2018robust} where Zhang et al. used dimensional analysis and sparse Bayesian regression to recover the underlying dynamical systems. Another related problem is to infer the interaction function in a system of agents from the trajectory data. In \cite{bongini2017inferring,lu2018nonparametric}, nonparametric regression was used to predict the interaction function and a theoretical guarantee was established.

There are approaches using deep learning techniques. In \cite{long2017pde}, Long et al. proposed  a PDE-Net to learn differential operators by learning convolution kernels. 
In \cite{raissi2017physics}, Raissi et al. used neural networks to learn and predict the solution of the equation without finding its explicit form.  In \cite{raissi2018hidden}, neural networks were further used to learn certain parameters in the PDEs from the given data. In \cite{qin2018data},  Residual Neural Networks (ResNet) are used as building blocks for equation approximation. In \cite{khoo2018switchnet}, neural networks are used to solve the  wave equation based inverse scattering problems by providing maps between the scatterers and the
scattered field (and vice versa).  Related works showing the advantages of deep learning include \cite{khoo2018switchnet,lusch2018deep,qin2018data,raissi2018deep}. 

In this paper,  we propose a new algorithm based on the convergence analysis of numerical PDE  schemes. We assume that the governing PDE is a linear combination of a subset of a prescribed dictionary containing different  differential terms,  and the objective is to find the correct set of coefficients.   We use finite difference methods, such as the 5-point  ENO scheme, to approximate the spatial derivatives in the dictionary.  While we utilize $L^1$ minimization to aid the efficiency of the approach,  the main idea is to validate and correct the results by Time Evolution Error (TEE).  This approach, we call Identifying Differential Equations with Numerical Time evolution (IDENT) is  explored for  data with non-periodic boundary conditions, noisy data and PDEs with varying coefficients for nonlinear PDE identification.  
For noisy data, we propose an order preserving denoising method called Least Square Moving Average (LSMA) to effectively denoise the given data.  To tackle varying coefficients, we expand the number of coefficients in terms of finite element bases. This procedure called Base Element Expansion (BEE), again uses the fundamental idea of convergence in finite element approximation.   
From a theoretical perspective, we establish a performance guarantee based on an incoherence property, and define a new noise-to-signal ratio for the PDE identification problem.  Contributions of this paper include:
\begin{enumerate}\vspace{-0.2cm}
\item{establishing a new direction of using numerical PDE techniques for PDE identification,  } \vspace{-0.2cm}
\item{proposing a flexible approach which can handle different boundary conditions, are more robust against noise, and can identify nonlinear PDEs with varying coefficients,  }\vspace{-0.2cm}
\item{establishing a recovery theory of Lasso for weighted $L^1$ minimization, which leads to the new definition of noise-to-signal ratio for PDE identification,}\vspace{-0.2cm}
\item{systematically analyzing the noise and downsampling, and proposing a new denoising method called Least Square Moving Average (LSMA).  }
\end{enumerate}

This paper is organized as follows: The main algorithm is presented in Section \ref{sec:ident}, aspects of denoising and downsampling effects are in Section \ref{sec:noise}, and PDEs with varying coefficients are in Section \ref{sec:varying}, followed by a concluding remark in  \ref{sec:summary} and some details in the Appendix. 
Specifically, the set-up of the problem is presented in subsection \ref{ssec:setup}; details of the IDENT algorithm are in subsection \ref{ssec:algo};  a recovery theory for Lasso and the new noise-to-signal ratio are  in subsection \ref{ssec:recovery};  and  the first set of numerical experiments are in subsection \ref{ssec:num_nonoise}.  
In Section \ref{sec:noise} of denoising and downsampling,  LSMA denoising method is introduced in subsection \ref{subsec:lsma},  numerical experiments for noisy data are presented in subsection \ref{subsecnoisyexperiment}, and downsampling effects are considered in subsection \ref{subsec:down}.
In Section \ref{sec:varying}, we consider nonlinear PDEs with varying coefficients and introduce BEE  motivated by finite element approximation.

\section{Identifying Differential Equations with Numerical Time evolution (IDENT)} \label{sec:ident}

We start with general notations in Section \ref{ssec:notation} and the set-up of the problem in Section \ref{ssec:setup}, then present our IDENT algorithm with the time evolution error check  in Section \ref{ssec:algo}.  A recovery theory is established in Section \ref{ssec:recovery}, and the first set of numerical experiments is presented in Section \ref{ssec:num_nonoise}.

\subsection{Notations}  \label{ssec:notation}

We use bold letter to denote vectors, such as $\ba,\bb$. The support of a vector $\mathbf{x}$ is the set of indices at which it is nonzero: $\supp(\mathbf{x}) := \{j : x_j \neq 0\}$. 
We use $A^T$ and $A^*$ to denote the transpose and the conjugate transpose of the matrix $A$.
We use $x \rightarrow \varepsilon^+$ to denote $x > \varepsilon$ and $x\rightarrow \varepsilon$.
Let $\mathbf{f} = \{f(x_i,t_n) | i=1,\ldots,N_1, n=1,\ldots,N_2\} \in \mathbb{R}^{N_1N_2}$ be samples of a function $f: \calD \times [0,\infty) \rightarrow \mathbb{R}$ with spatial spacing $\Delta x$ and time spacing $\Delta t$.  The integers $N_1$ and $N_2$ are the total number of spatial and time discretization respectively.   We assume PDEs are simulated on the grid with time spacing $\delta t$ and spatial spacing $\delta x$, while data are sampled on the grid with time spacing $\Delta t$ and spatial spacing $\Delta x$.
The vector $L^p$ norm of $\mathbf{f}$ is $\|\mathbf{f}\|_p = (\sum_{i=1}^{N_1}\sum_{n=1}^{N_2}|f(x_i,t_n)|^p )^{1/p} $. Denote $\|\mathbf{f}\| = \|\mathbf{f}\|_2$. The function $L^p$ norm of $\mathbf{f}$ is $\|\mathbf{f}\|_{L^p} = (\sum_{i=1}^{N_1}\sum_{n=1}^{N_2}|f(x_i,t_n)|^p \Delta x \Delta t )^{1/p} $. Notice that $\|\mathbf{f}\|_{L^p} = \|\mathbf{f}\Delta x^{1/p} \Delta t^{1/p}\|_p$.

\subsection{The set-up of the problem} \label{ssec:setup}

We consider the parametric model of PDEs where $\mathcal{F}(x,u,u_x,u_{xx})$ is a linear combination of monomials such as $1$, $u$, $u^2$, $u_x$, $u_x^2$, $uu_x$, $u_{xx}$, $u_{xx}^2$, $uu_{xx}$, $u_x u_{xx}$ with coefficients $\ba = \{a_j\}_{j=1}^{10}$: 
\begin{equation}\label{E:constant_a}
u_t = a_1 + a_2 u + a_3 u^2 + a_4 u_x + a_5 u_x^2+ a_6 u u_x + a_7 u_{xx} + a_8 u^2_{xx} + a_9 u u_{xx} + a_{10} u_x u_{xx}.
\end{equation}
We refer to each monomial as a feature, and  let $N_3$ be the number of features, i.e.,  $N_3 = 10$ in \eqref{E:constant_a}. The right hand side can be viewed as a second-order Taylor expansion of $\mathcal{F}(u,u_x,u_{xx})$. It can easily be generalized to higher-order Taylor expansions, and operators $\mathcal{F}(u,u_x,u_{xx},u_{xxx}, \partial_x^4 u,\ldots)$ depending on higher order derivatives.  This model contains a rich class of differential equations, e.g., the heat equation, transport equation, Burger's equation, KdV equation, Fisher's equation that models gene propagation.

Evaluating \eqref{E:constant_a} at discrete time and space $(x_i,t_n), i=1,\ldots,N_1, n = 1,\ldots,N_2$ yields the discrete linear system 
$$ F \textbf{a} = \bb,$$ 
where $$\mathbf{b} = \{u_t(x_i,t_n)| i=1,\ldots,N_1, n = 1,\ldots,N_2\} \in \mathbb{R}^{N_1N_2},$$ and $F$ is a $N_1N_2 \times N_3 $ feature matrix in the form of
{\footnotesize
\begin{equation} \label{E:constant_F}
 F = \left( \begin{array}{cccccccccc}
\vdots & \vdots & \vdots & \vdots &\vdots & \vdots &\vdots & \vdots &\vdots & \vdots \\
1 & u(x_i,t_n) & u^2(x_i,t_n) & u_x(x_i,t_n) & u_x^2(x_i,t_n) & uu_x(x_i,t_n) & u_{xx}(x_i,t_n) & u_{xx}^2(x_i,t_n) & u u_{xx}(x_i,t_n) & u_x u_{xx}(x_i,t_n) \\
\vdots & \vdots & \vdots & \vdots &\vdots & \vdots &\vdots & \vdots &\vdots & \vdots \\
\end{array}  \right).
\end{equation}
}
We use $F[j]$ to denote the $j$th column vector associated with the $j$th feature evaluated at $(x_i,t_n), {i=1,\ldots,N_1, n=1,\ldots,N_2}$

The \textbf{objective} of PDE identification is to recover  the unknown coefficient vector  $\mathbf{a} \in \mathbb{R}^{N_3}$ from given data.  Real world physical processes are often presented with a few number of features in the right hand side of \eqref{E:constant_a}, so it is reasonable to assume that the coefficients are sparse. 

For differential equations with varying coefficients, we consider PDEs of the form
\begin{equation}\label{E:general_a}
u_t = a_1(x) + a_2 (x) u + a_3 (x) u^2 + a_4(x) u_x + a_5(x) u_x^2+ a_6(x) u u_x + a_7(x) u_{xx} + a_8 (x) u^2_{xx} + a_9 (x) u u_{xx} + a_{10}(x) u_x u_{xx}
\end{equation}
where each $a_j(x)$ is a function on the spatial domain of the PDE.
We expand the coefficients in terms of finite element bases $\{ \phi_l\}_{l=1}^{L} $ such that 
\begin{equation}\label{E:L}
a_j (x) \approx \sum_{l=1}^{L} a_{j,l} \phi_l(x) \text{ for } j=1,\dots,N_3,
\end{equation}
where $L$ is the number of finite element bases used to approximate $a_j(x)$. 
Let $y_1<y_2<\cdots <y_L$ be a partition of the spatial domain. We use a typical finite element basis function, e.g.,  $\phi_l(x)$ is  continuous, and  linear within each subinterval $(y_i, y_{i+1})$,  and $\phi_l(y_i)=\delta_{li}=1$ if $i=l$; $0$ otherwise. 
If the $a_j(x)$'s are Lipchitz functions, and finite element bases are defined on a grid with spacing $O(1/L)$. The approximation error of the $a_j(x)$'s satisfies 
\begin{equation}\label{E:approxError}
\|a_j - \sum_{l=1}^{L} a_{j,l} \phi_l\|_{L^p} \le O(1/L), \ p \in (0,\infty).
\end{equation}
In the case of varying coefficients, the feature matrix $F$ is of size $N_1 N_2 \times N_3 L$,
{\footnotesize
\begin{align} \label{E:general_F}
F = \left( \begin{array}{ccc|ccc|c|ccc}
\vdots &  & \vdots &  \vdots &  & \vdots &     & \vdots &  & \vdots \\
\phi_1(x_i) & \dots & \phi_{L}(x_i) & u(x_i,t_n)\phi_1(x_i) & \dots  & u(x_i,t_n)\phi_{L}(x_i) & \dots  & u_xu_{xx}(x_i,t_n)\phi_1(x_i) & \dots &u_x u_{xx}(x_i,t_n)\phi_{L}(x_i) \\
\vdots &  & \vdots &  \vdots &  & \vdots &     & \vdots &  & \vdots \\
\end{array}  \right),
\end{align}
}
and the vector to be identified is 
\[ 
\textbf{a} = \left(a_{1,1},\dots,a_{1,L} | a_{2,1}, \dots, a_{2,L} | \dots \dots \dots | a_{N_3,1}, \dots, a_{N_3,L} \right)^T
\in \mathrm{R}^{N_3 L}.  
\]
The feature matrix $F$ has a block structure. We use $F[j,l]$ to denote the column of $F$ associated with the $j$th feature and the $l$th basis. 
To be clear, $F[j]$ is the $j$th column of \eqref{E:constant_F}, and $F[j,l]$ is the $(j-1)L+l$th column of \eqref{E:general_F}. 
Evaluating \eqref{E:general_a} at $(x_i,t_n), i=1,\ldots,N_1, n = 1,\ldots,N_2$ yields the discrete linear system 
\[
F \ba = \bb+ \bbeta,
\]
where $\bbeta = \{\eta(x_i,t_n) | i=1,\ldots,N_1, n=1,\ldots,N_2\} \in \mathbb{R}^{N_1 N_2}$ represents the approximation error of the $a_j(x)$'s by finite element bases such that 
$$\eta(x_i,t_n) = \left(\sum_{l=1}^L a_{1,l}\phi_l(x_i) - a_1(x_i)\right) + \ldots + \left(\sum_{l=1}^L a_{10,l}\phi_l(x_i) - a_{10}(x_i)\right) u_xu_{xx} (x_i,t_n).$$
In the case that $u,u_x,u_{xx}$ are uniformly bounded, 
$$\|\bbeta\|_{L^p} \le O(1/L), \  p \in (0,\infty),$$
and $\bbeta = 0$ when all coefficients are constants.

\subsection{The proposed algorithm: IDENT} \label{ssec:algo}

In this paper, we assume that only the discrete data $\{u_{i}^n | i=1, \dots, N_1\text{ and } n= 1, \dots, N_2\}$ and the boundary conditions are given. If data are perfectly generated and there is no measurement noise, $u_{i}^n = u(x_i,t_n)$ for every $i$ and $n$, and we outline the proposed IDENT algorithm in this section assuming the given data do not have noise. 

\textbf{The first step} of IDENT is to construct the empirical version of the  feature matrix $F$ and the vector $\mathbf{b}$ containing time derivatives from the given data.  The derivatives are approximated by finite difference methods which gives  flexibility in dealing with different types of PDEs and boundary conditions (e.g. non-periodic).  
We approximate the time derivative $u_t$ by a first-order backward difference scheme:
$$u_t (x_j,t_k) \approx \widehat{u_t} (x_j,t_n) := \frac{u(x_j,t_n) - u(x_j,t_{n-1})}{\Delta t},$$
which yields the error
\[
\widehat{u_t} (x_j,t_n) = {u_t} (x_j,t_n) + O(\Delta t).
\]
Let $\widehat\bb$ be the empirical version of $\bb$ constructed from data: $$\widehat{\mathbf{b}} = \{\widehat{u_t}(x_i,t_n): i=1,\ldots,N_1, n = 1,\ldots,N_2\} \in \mathbb{R}^{N_1N_2}.$$
We approximate the spatial derivative $u_x$ through the five-point ENO method proposed by Harten, Engquist, Osher and Chakravarthy \cite{ENO87}. 
Let $\widehat{u_x} (x_j,t_n)$ and $\widehat{u_{xx}} (x_j,t_n)$ be approximations of ${u_x} (x_j,t_n)$ and ${u_{xx}} (x_j,t_n)$ by the five-point ENO method 
which yields the error:
\[
\widehat{u_x} (x_j,t_n) = {u_x} (x_j,t_n) + O(\Delta x^4), \quad 
\widehat{u_{xx}} (x_j,t_n) = {u_{xx}} (x_j,t_n) + O(\Delta x^3).
\]
Putting $\widehat{u_x} (x_j,t_n)$'s and $\widehat{u_{xx}} (x_j,t_n)$'s to the feature matrix $F$ in \eqref{E:general_F} gives rise to the empirical feature matrix, denoted by $\widehat{F}$. 
For example, the second column of $\widehat{F}$ is given by $\{u_i^n | i=1,\ldots,N_1 \text{ and } n=1,\ldots,N_2\}$ as an approximation of $\{u(x_i,t_n) | i=1,\ldots,N_1 \text{ and } n=1,\ldots,N_2\}$ as follows 
\[
(u_1^1, u_2^1, \dots,u_{N_1}^1,  u_1^2,  \dots, u_{N_1}^2, \dots, u_1^{N_2},  \dots,u_{N_1}^{N_2})^T \in \mathrm{R}^{N_1 N_2}.
\]
These empirical quantities give rise to the linear system
\begin{equation}
\label{E:linear1}
\widehat F \ba= \widehat\bb + \be , \quad \mathbf{e} =  \bb -\widehat{\bb} + (\widehat{F}-F)\ba +\bbeta,
\end{equation}
where the terms $\bb -\widehat{\bb}$, $(\widehat{F}-F)\ba$ and $\bbeta$ arise from errors in approximating time and spatial derivatives, and the finite element expansion of varying coefficients, respectively. The total error $\be$ satisfies
\begin{equation}
\label{E:errorbound}
\|\be\|_{L^2} \le  \varepsilon \text{ such that } \varepsilon  = O(\Delta t + \Delta x^3 + 1/L).
\end{equation}

\textbf{The second step}  is to find possible candidates for the non-zero coefficients of $ \ba$. 
We  utilize $L^1$-regularized minimization, also known as Lasso \cite{tibshirani1996regression} or group Lasso \cite{yuan2006model}, solved by Alternating Direction Method of Multipliers \cite{boyd2011distributed} to get a sparse or block-sparse vector.  We minimize the following energy:
\begin{equation}\label{eqgLasso}
\widehat{\ba}_{\text{G-Lasso}}(\lambda) ={\textstyle \arg \min_{\bz}} \left\{ \frac{1}{2} \| \widehat\bb - \widehat F_{\infty} \textbf{z} \|^2_2 + \lambda \sum_{j=1}^{N_3} \left(\sum_{l=1}^L |z_{j,l}|^2 \right)^{\frac{1}{2}} \right\},
\end{equation}
where $\lambda$ is a balancing parameter between the first fitting term and the second regularization term. The matrix  $\hF_\infty$ is obtained from $\hF$ with each column divided by the maximum magnitude of the column, namely, $\hF_\infty[j,l] = \hF[j,l]/\|\hF[j,l]\|_{\infty}$.    We use Lasso for the constant coefficient case where $L=1$, and group Lasso  for the varying coefficient case $L>1$.    
A set of possible active features is selected by thresholding the normalized coefficient magnitudes: 
\begin{equation}
\label{algthresholding}
\widehat\Lambda_{\tau} : = \left\{j : \|\hF[j]\|_{L^1}\left\| \sum_{l=1}^L \frac{\widehat{\ba}_{\text{G-Lasso}}(\lambda)_{j,l}}{\|\hF[j,l]\|_\infty} \phi_l \right\|_{L^1}   \ge \tau \right\}.
\end{equation}
with a fixed thresholding parameter $\tau\ge 0$. 

\textbf{The final step}  is to identify the correct support using the Time Evolution Error (TEE).   (i) From the candidate coefficient index set $\widehat\Lambda_{\tau}$, consider every subset $\Omega \subseteq \widehat\Lambda_{\tau}$.  For each $\Omega= \{j_1, j_2, \ldots, j_k\}$,  find the coefficients  $\widehat{\textbf{a}} = (0, 0, \widehat{a}_{j_1}, 0, \dots, \widehat{a}_{j_k}, \dots )$ by a  least-square fit such that $\widehat{\ba}_{\Omega} = \widehat{F}_{\Omega}^\dagger \widehat \bb$ and $\widehat{\ba}_{\Omega^\complement} = \mathbf{0}$.    
(ii)  Using these coefficients, construct the differential equation and numerically time evolve  
\[ u_t =  \mathcal{F} \widehat{\textbf{a}},\]
starting from the given initial data, for each $\Omega$.   It is crucial to use a smaller time step $\widetilde{\Delta t}\ll \Delta t$, where $\Delta t$ is the time spacing of the given data.  We use first-order forward Euler time discretization of the time derivative with time step 
$\widetilde{\Delta t}=O(\Delta x^r)$ where $r$ is the highest order of the spatial derivatives  associated with $\widehat{\ba}$.  
(iii) Finally,  calculate the time evolution error for each $\widehat{\textbf{a}}$:
\[ 
\text{ TEE} (\widehat{\textbf{a}}) := \sum_{i=1}^{N_1} \sum_{n=1}^{N_2} |\bar{u}_i^n - u_i^n| \Delta x \Delta t,
\]
where $\bar{u}_i^n$ is the numerically time evolved solution at $(x_i,t_n)$ of the PDE with the support $\Omega$ and coefficient $\widehat{\textbf{a}}$. 
We pick the subset  $\Omega$ and the corresponding coefficients  $\widehat{\ba}$, which give the smallest TEE, and  denote the recovered support as  $\widehat{\Lambda}$.  This is the output of the algorithm, which is the identified PDE. 
Algorithm \ref{algident} summarizes this procedure.
\begin{algorithm}
\caption{Identifying Differential Equations with Numerical Time evolution (IDENT) }
\label{algident}
\textbf{Input}: The discrete data $\{u_i^n | i=1, \dots, N_1 \text{ and } n=1,\dots,N_2\}$. \\ 
\textbf{[Step 1]}  Construct the empirical feature matrix $\hF$ and the empirical vector $\widehat\bb$ using ENO schemes. \\
\textbf{[Step 2]} Find a set of possible active features by the $L^1$ minimization \eqref{eqgLasso} followed by thresholding.  
\\
\textbf{[Step 3]} Pick the coefficient vector $\widehat{\textbf{a}}$, with minimum Time Evolution Error (TEE).\\ 
\textbf{Output:} The identified coefficients $\widehat{\ba}$ where $\widehat{\ba}_{\widehat{\Lambda}} = \widehat{F}_{\widehat{\Lambda}}^{\dagger} \widehat{\bb}$.
\end{algorithm}

We note that it is possible to skip the $L^1$ minimization step, and use TEE to recover the support of coefficients by considering all possible combinations from the beginning, however, the computational cost is very high.  The $L^1$ minimization helps to reduce the number of combinatorial trials, and make IDENT more computationally efficient. 
On the other hand, while $L^1$ minimization is effective in finding a sparse vector, $L^1$ alone is often not enough: (i) Zero coefficients in the true PDE may become non-zero in the minimizer of $L^1$. (ii) If active terms are chosen by a thresholding, results are sensitive to the choice of thresholding parameter, e.g., $\tau$ in (\ref{algthresholding}).
 (iii) The balancing parameter  $\lambda$ can effect the results.   (iv) If some columns of the empirical feature matrix $\widehat{F}$ are highly correlated, Lasso is known to have a larger support than the ground truth \cite{fannjiang2012coherence}.   TEE refines the results from Lasso, and relaxes the dependence on  the parameters.  
  
There are two fundamental ideas behind TEE:
\begin{enumerate}\vspace{-0.1cm}
\item{For nonlinear PDEs, it is impossible to isolate each term separately to identify each coefficient.  Any realization of PDE must be understood as a set of terms. }  \vspace{-0.1cm}
\item{If the underlying dynamics are identified by the true PDE, any refinement in the discretization of the time domain should not deviate from the given data.  This is the fundamental idea of the consistency, stability and convergence of a numerical scheme.  
}
\end{enumerate}
Therefore, the main effect of TEE is to evolve the numerical error from the wrongly identified differential terms. 
This method can be applied to linear or nonlinear PDEs.  The effectiveness of TEE can be demonstrated with an example. Assume that the solution $u$ is smooth and decays sufficiently fast at infinity, and consider the following linear equation with constant coefficients:
\[
\frac{\partial u}{\partial t} = a_0 u + a_1 \frac{\partial u}{\partial x} +\cdots + a_m \frac{\partial^m u}{\partial x^m}.
\]
After taking the Fourier transform for the equation and solving the ODE,  one can obtain the transformed solution: 
\[
\hat{u}(\xi,t)=\hat{u}(\xi,0) e^{a_0 t} e^{a_1\mathbf{i} \xi t} e^{-a_2 \xi^2 t }\cdots e^{a_m (\mathbf{i}\xi)^m t},
\]%
where $\mathbf{i}=\sqrt{-1}$ and $\xi$ is the variable in the Fourier domain.
If a term with an even-order derivative, such as $a_2 \frac{\partial^2 u}{\partial x^2}$, is mistakenly included in the PDE, 
it will make every frequency mode grow or decrease exponentially in time; if a term with an odd-order derivative, such as $a_1 \frac{\partial u}{\partial x}$,
is mistakenly included in the solution, it will introduce a wrong-speed ossicilation of the solution. In either case, the deviation from the correct solution grows fast in time, providing an efficient way to  distinguish the wrong terms.   Our numerical experiments show that TEE is an effective tool to correctly identify the coefficients.  Our first set of experiments are presented in subsection \ref{ssec:num_nonoise}.

\subsection{Recovery theory of Lasso, and new Noise-to-Signal Ratio (NSR) } \label{ssec:recovery}

In this subsection, we establish a performance guarantee of Lasso for the identification of PDEs with constant coefficients.  In the Step 2 of IDENT, Lasso is applied as  $L^1$ regularization in (\ref{eqgLasso}).  We consider the incoherence property proposed in \cite{donoho2001uncertainty}, and follow the ideas in \cite{fuchs2004sparse,tropp2004just,tropp2006just} to establish  a recovery theory.  While the details of the proof is presented in  Appendix \ref{A:recovery}, here we state the result which leads to the new definition of noise-to-signal ratio. 

For PDEs with constant coefficients, we set $L=1$ in (\ref{E:L}), and consider the standard Lasso:
\begin{equation}
\label{eqLasso}
\tag{Lasso}
\widehat{\ba}_{\text{Lasso}}(\lambda) ={\textstyle \arg \min_{\bz}} \left\{ \frac{1}{2} \| \widehat\bb - \widehat F_\infty \textbf{z} \|^2_2 + \lambda \|\bz\|_1 \right\}.
\end{equation}
If all columns of $\hF$ are uncorrelated, $\ba$ can be robustly recovered by Lasso. Let $\hF = [\hF[1] \ \hF[2] \ \ldots \ \hF[N_3]]$ where $\hF[j]$ stands for the $j$th column of $\hF$ in \eqref{E:constant_F}. To measure the correlation between the $j$th and the $l$th column of $\hF$, we use the pairwise coherence 
$$ \mu_{j,l}(\hF) = \frac{|\langle \hF[j] , \hF[l] \rangle|}{\|\hF[j]\|_2 \|\hF[l]\|_2}$$
and the mutual coherence of $\hF$ as in \cite{donoho2001uncertainty}:
\[
\mu(\hF) = \max_{j \neq l} \mu_{j,l}(\hF) = \max_{j\neq l} \frac{|\langle \hF[j] , \hF[l] \rangle|}{\|\hF[j]\|_2 \|\hF[l]\|_2}.
\] 
Since normalization does not affect the coherence, we have $ \mu_{j,l}(\hF_\infty)= \mu_{j,l}(\hF)$ and $\mu(\hF_\infty)=\mu(\hF)$.
The smaller $\mu(\hF)$, the less correlated are the columns of $\hF$, and $\mu(\hF) = 0$ if and only if the columns are orthogonal. Lasso will recover the correct coefficients if $\mu(\hF)$ is sufficiently small.

\begin{theorem}
\label{thmLasso}
Let $\mu =\mu(\hF)$, $w_{\max} = \max_j \|\hF[j]\|_\infty \|\hF[j]\|_{L^2}^{-1}$ and $w_{\min} = \min_j \|\hF[j]\|_\infty \|\hF[j]\|_{L^2}^{-1}$. Suppose the support of $\ba$ contains no more than $s$ indices, $\mu(s-1) < 1$ and 
$$\frac{\mu s}{1-\mu(s-1)} < \frac{w_{\min}}{w_{\max}}.$$
Let 
\begin{equation}
\label{thmlambda}
\lambda = \frac{[1-(s-1)\mu] }{w_{\min}[1-\mu(s-1)] - w_{\max} \mu s  }\cdot \frac{\varepsilon^+}{\Delta x \Delta t}.
\end{equation}
Then
\begin{enumerate}
\item[1)] the support of $\widehat{\ba}_{\text{Lasso}}(\lambda)$ is contained in the support of $\ba$;

\item[2)] the distance between $\widehat{\ba}_{\text{Lasso}}(\lambda)$ and $\ba$ satisfies
\begin{equation}
\label{thmdist}
\max_j \|\hF[j]\|_{L^2} \left|\|\hF[j]\|_\infty^{-1}\widehat{\ba}_{\text{Lasso}}(\lambda)_j-a_j\right| 
\le \frac{w_{\max} + \varepsilon/\sqrt{\Delta t \Delta x}}{w_{\min}[1-\mu(s-1)] -w_{\max} \mu s } \varepsilon;
\end{equation}

\item[3)] if 
\begin{equation}
\label{thmnsr}
\min_{j:\ a_j\neq 0} \|\hF[j]\|_{L^2} | a_j| >  \frac{w_{\max}+\varepsilon/\sqrt{\Delta t \Delta x}}{w_{\min}[1-\mu(s-1)] -w_{\max} \mu s } \varepsilon,
\end{equation}
then the support of $\widehat{\ba}_{\text{Lasso}}(\lambda)$ is exactly the same as the support of $\ba$.
\end{enumerate}
\end{theorem}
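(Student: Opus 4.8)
The plan is to establish this as a standard sparse-recovery result for Lasso under a mutual-coherence hypothesis, following the Fuchs/Tropp program cited in the excerpt. The natural vehicle is the \emph{primal-dual witness} (or \emph{inverse Lipschitz}) construction: I would guess the correct support $S = \supp(\ba)$, solve a restricted optimization problem on $S$ only, and then verify that the resulting candidate is in fact the global Lasso minimizer by checking the KKT optimality conditions on the off-support columns. Throughout I work with the normalized matrix $\hF_\infty$ appearing in (\ref{eqLasso}), translating back to $\hF$ at the end via the column weights $\|\hF[j]\|_\infty$; the quantities $w_{\max}$ and $w_{\min}$ are precisely the ratios $\|\hF[j]\|_\infty \|\hF[j]\|_{L^2}^{-1}$ that let me pass between the two normalizations.

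First I would write the KKT conditions for (\ref{eqLasso}): a vector $\widehat{\bz}$ minimizes the objective iff there is a subgradient $\bg \in \partial \|\widehat{\bz}\|_1$ with $\widehat{F}_\infty^T(\widehat{F}_\infty \widehat{\bz} - \widehat\bb) + \lambda \bg = 0$, where $g_j = \sign(\widehat z_j)$ on the support and $|g_j| \le 1$ off it. Recalling from (\ref{E:linear1}) that $\widehat\bb = \hF \ba + \be$ with $\|\be\|_{L^2}\le \varepsilon$, I would define the restricted solution $\widehat{\bz}_S$ by solving the least-squares-plus-penalty problem using only the columns indexed by $S$, and set $\widehat{\bz}_{S^\complement} = 0$. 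The Gram matrix $G_S = (\widehat{F}_\infty)_S^T (\widehat{F}_\infty)_S$ is invertible because the coherence bound $\mu(s-1) < 1$ forces it to be diagonally dominant (Gershgorin), which is the standard way coherence controls conditioning; this invertibility is what makes the restricted solve well-posed and gives part (1), that $\supp(\widehat{\ba}_{\text{Lasso}}) \subseteq \supp(\ba)$.

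Next I would estimate two things. For the deviation bound (part (2) and the displayed (\ref{thmdist})), I would bound $\|G_S^{-1}\|$ via the Neumann series $\|G_S^{-1}\| \le 1/(1 - \mu(s-1))$ and propagate the error terms $\varepsilon$ (from $\be$) and $\lambda$ (from the penalty) through the explicit restricted solution $\widehat{\bz}_S = G_S^{-1}[(\widehat F_\infty)_S^T \widehat\bb - \lambda \bg_S]$. The residual and penalty contributions combine into the numerator $w_{\max} + \varepsilon/\sqrt{\Delta t \Delta x}$ and the denominator $w_{\min}[1-\mu(s-1)] - w_{\max}\mu s$, where the $\Delta x \Delta t$ factors enter through the conversion between the $\|\cdot\|_2$ norm used inside Lasso and the $\|\cdot\|_{L^2}$ norm in the statement (recall $\|\mathbf{f}\|_{L^2} = \|\mathbf{f}\sqrt{\Delta x \Delta t}\|_2$ from the Notations). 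For the crucial off-support feasibility check, I would verify $|g_l| < 1$ for $l \notin S$, which amounts to showing $\|(\widehat F_\infty)_{S^\complement}^T \widehat F_\infty(\widehat{\bz}_S - \ba_S) + \text{(error)}\|_\infty < \lambda$; this is exactly where the hypothesis $\mu s/(1-\mu(s-1)) < w_{\min}/w_{\max}$ and the precise choice (\ref{thmlambda}) of $\lambda$ are needed, since $\lambda$ must be large enough to dominate the correlation leakage but small enough to keep the deviation bound meaningful. Finally, part (3) is a short corollary: given part (2), if every nonzero $a_j$ exceeds the deviation bound (\ref{thmnsr}), then no true coefficient can be shrunk to zero, so the recovered support equals $S$ exactly.

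The main obstacle is the off-support dual-feasibility verification, balancing the two competing constraints on $\lambda$: the cross-correlation term $(\widehat F_\infty)_{S^\complement}^T(\widehat F_\infty)_S G_S^{-1}$, whose $\ell_\infty \to \ell_\infty$ operator norm I must bound by $\mu s/(1 - \mu(s-1))$, has to leave strictly positive slack below $1$, and matching this slack against the error-driven contribution is what pins down the exact constant in (\ref{thmlambda}). Keeping the weighted normalization bookkeeping ($w_{\max}/w_{\min}$ factors and the $\sqrt{\Delta x \Delta t}$ conversions) consistent between the $\ell_2$ geometry of the solver and the $L^2$ geometry of the statement will be the most error-prone part of the calculation, but it is mechanical once the coherence estimates above are in place.
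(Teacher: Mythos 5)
Your proposal follows essentially the same route as the paper: the appendix proves a weighted-$L^1$ generalization of Tropp's coherence-based recovery theorem by constructing the minimizer restricted to the true support and then verifying its global optimality via an off-support perturbation (equivalently, the KKT dual-feasibility check you describe), with the same key ingredients — invertibility of the restricted Gram matrix under $\mu(s-1)<1$, the bound $\max_{\omega\notin\Lambda}\|\Phi_\Lambda^\dagger\phi[\omega]\|_1<\mu s/(1-\mu(s-1))$ pinning down $\lambda$, the Neumann-series estimate $\|(\Phi_\Lambda^*\Phi_\Lambda)^{-1}\|_{\infty,\infty}\le[1-\mu(s-1)]^{-1}$ for the deviation bound, and the $\sqrt{\Delta x\,\Delta t}$ conversion between $\ell^2$ and $L^2$ norms. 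Part (3) as an immediate corollary of part (2) also matches the paper, so the approaches coincide.
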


Theorem \ref{thmLasso} shows that Lasso will give rise to the correct support when the empirical feature matrix $\hF$ is incoherent, i.e. $\mu(\hF) \ll 1$, and all underlying coefficients are sufficiently large compared to  noise. 
When the empirical feature matrix is coherent, i.e., some columns of $\hF$ are correlated, it has been observed that $\widehat{\ba}_{\text{Lasso}}(\lambda)$ are usually supported on $\supp(\ba)$ and the indices that are highly correlated with $\supp(\ba)$ \cite{fannjiang2012coherence}. We select possible features by thresholding in \eqref{algthresholding} which is equivalent to $\widehat\Lambda_{\tau} : = \left\{j : \|\hF[j]\|_{L^1} \|\hF[j]\|_{\infty}^{-1} |\widehat{\ba}_{\text{Lasso}}(\lambda)_j| \ge \tau \right\}$ in the case of constant coefficients. After this, TEE  is an effective tool in complement of Lasso to distinguish the correct features from the wrong ones.  The details of Theorem \ref{thmLasso} can be found in Appendix \ref{A:recovery}.

This analysis also gives rise to a \textbf{new noise-to-signal ratio}:
\begin{equation}
\label{eqnsr}
\text{Noise-to-Signal Ratio (NSR)} := \frac{\|\hF \ba -\widehat\bb\|_{L^2}}{\min_{j:\ a_j\neq 0} \|\hF[j]\|_{L^2} | a_j| }.
\end{equation}
The definition is derived from \eqref{thmnsr}, showing that the signal level is contributed by the minimum of the product of the coefficient and the column norm in the feature matrix - $\min_{j:\ a_j\neq 0} \|\hF[j]\|_{L^2} | a_j|$.  This term represents the dynamics resulted from the feature.
It is important to consider the multiplication rather than the magnitude of the coefficient only.    We also use this new definition of NSR to measure the level of noise in the following sections, which gives a more consistent representation.

\subsection{First set of  IDENT experiments }\label{ssec:num_nonoise}

We present the first set of  numerical experiments to illustrate the effects of IDENT.  Here data are sampled from exact or simulated solutions of PDEs with constant coefficients.   For boundary conditions, we use zero Dirichlet boundary conditions throughout the paper.  Modification to periodic or other boundary conditions is trivial, and numerical schemes with periodic boundary conditions  can achieve higher accuracy, for the cases without noise.   We observe that the Lasso results are not very sensitive to the choice of $\lambda$ using TEE, and we set $\lambda =500$ in all experiments.

\begin{figure}
\centering
\begin{tabular}{ccc}
(a) Given data & 
(b) Coherence pattern & 
(c) Result from Lasso \\
\includegraphics[width = 2.05in]{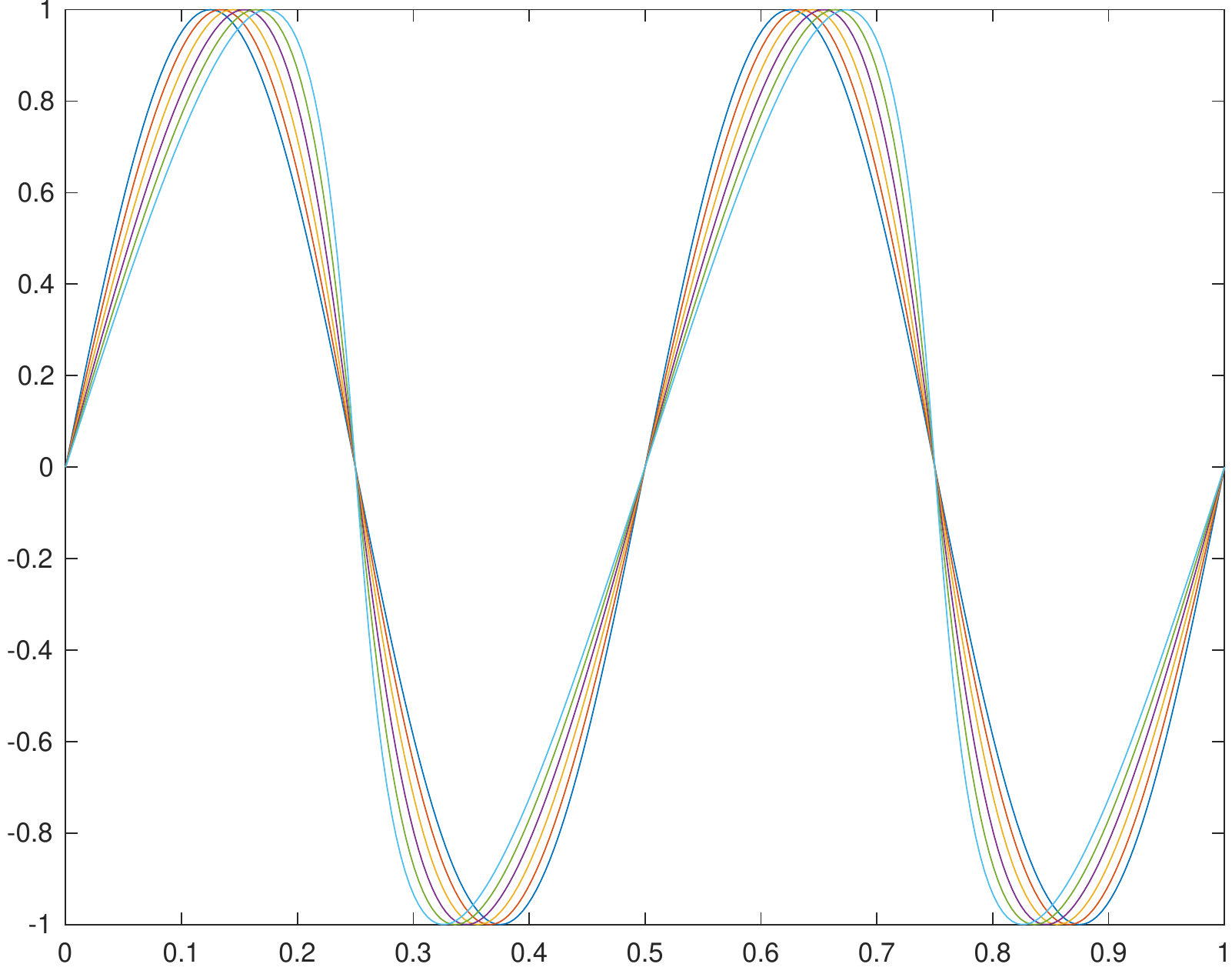} &
\includegraphics[width = 2.05in]{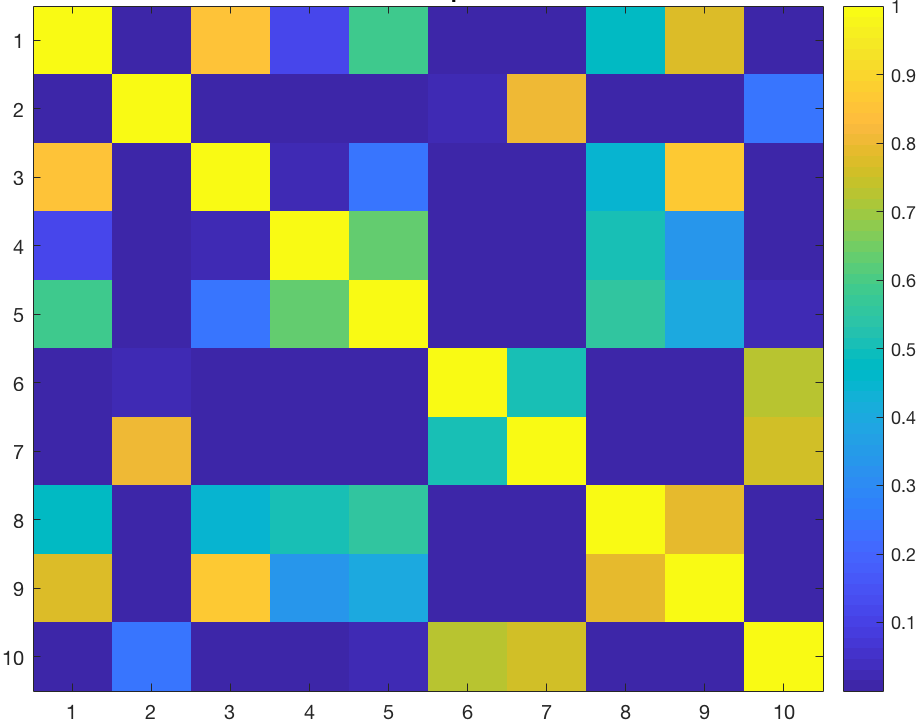} &
\includegraphics[width = 2.05in]{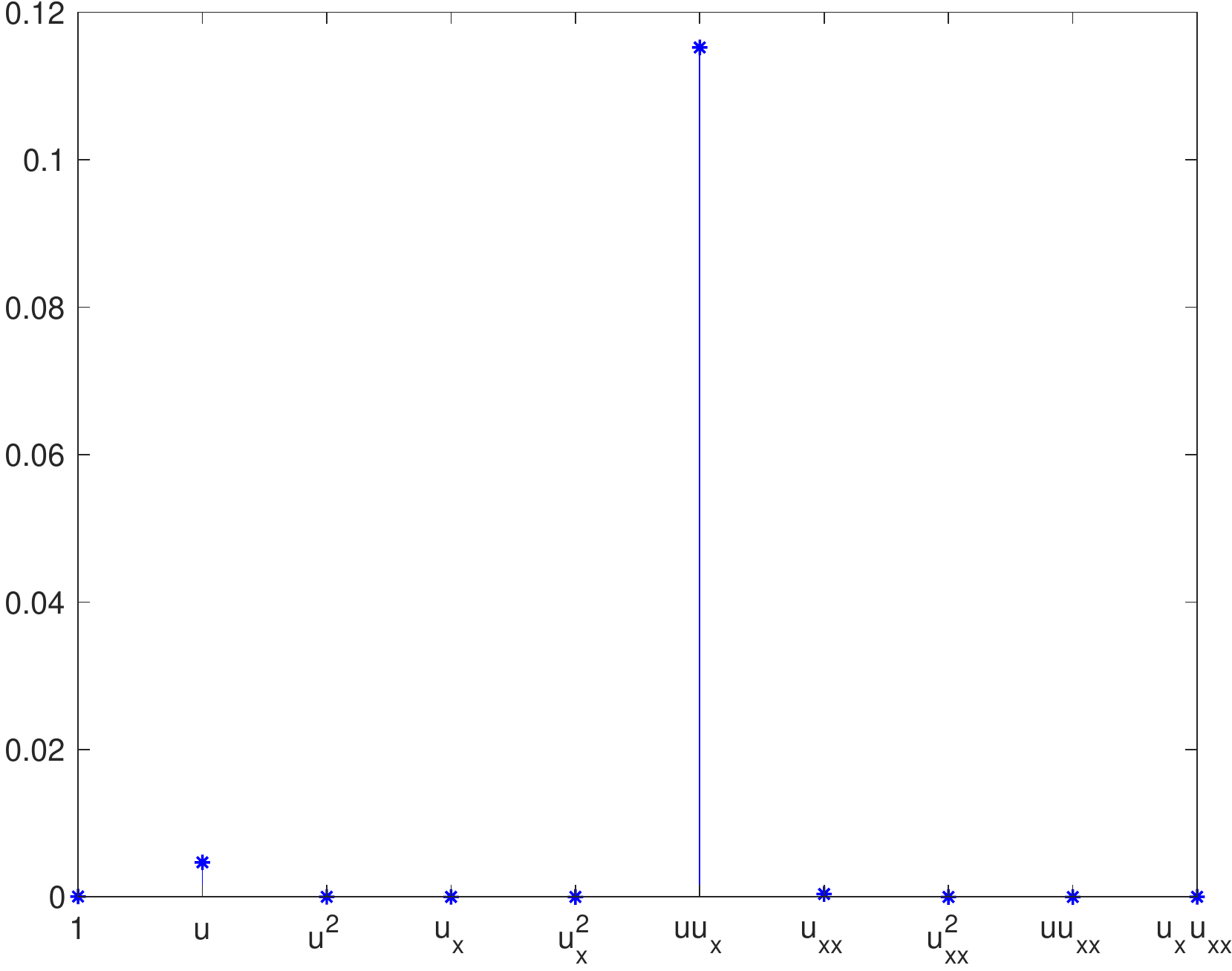}
\end{tabular}
\caption{Experiment with the Burger's equation \eqref{E:burger}.  (a) The given data  are sampled from true analytic solution. (b) The coherence pattern of $\hF$.  (c) Normalized coefficient magnitudes from Lasso.  Two possible features are identified, which are $u$ and $uu_x$.  }
\label{Fig-BurgerExactDemo}
\end{figure}

The first experiment is on the  Burger's equation with Dirichlet boundary conditions:
\begin{align}
& u_t + \left( \frac{u^2}{2}\right)_x =0, \ x \in [0,1] 
 \label{E:burger} \\
 &u(x,0)= \sin 4\pi x   \text{ and }u(0,t) = u(1,t) = 0. \nonumber
\end{align}
The given data are sampled from the true  analytic solution, shown in Figure \ref{Fig-BurgerExactDemo} (a), with $\Delta x = 1/56$ and $\Delta t = 0.004$, for  $t \in [0,0.05]$. Figure \ref{Fig-BurgerExactDemo} (b) displays the coherence pattern of the empirical feature matrix: the absolute values of $\hF_{\rm unit}^* \hF_{\rm unit}$ where $\hF_{\rm unit}$ is obtained from $\hF$ with column normalized to unit $L^2$ norm. This pattern shows the correlation between any pair of the columns in $\hF$. (c) shows the normalized coefficient magnitudes $\{\|\hF[j]\|_{L^1} \|\hF[j]\|_{\infty}^{-1} |\widehat{\ba}_{\text{Lasso}}(\lambda)_j|\}$ after $L^1$ minimization. 
The magnitudes of $u$ and $uu_x$ are not negligible, so they are picked as a possible set of active features in $\widehat\Lambda_{\tau}$.  Then,  TEE is computed for all subsets $\Omega \subseteq \widehat\Lambda_{\tau}$, i.e., $u_t = a u$, $u_t = b uu_x$ and $u_t = cu +d uu_x$ where the coefficients $a,b,c,d$ are calculated by least-squares:
\begin{center} 
\begin{tabular}{ | c | c | c |   }    
\hline
     Active terms  &  Coefficients of active terms by least-squares & TEE \\ \hline
    $u$ & $0.27$ & $78.76$ \\ 
   \textcolor{red}{ $uu_x$} & \textcolor{red}{$ -0.99$} & \textcolor{red}{$0.48$}
     \\ 
    $[u \ uu_x]$ & $ [0.10 \ -0.99]$ & $1.40$
    \\
    \hline
  \end{tabular}
\end{center}
The red line with only $uu_x$ term has the smallest TEE, and therefore is identified as the result of IDENT.  Since the true PDE is $u_t = - u u_x$,  the computed result shows a small coefficient error.

The second experiment is on the Burger's equation with a diffusion term:
\begin{align}
& u_t + \left( \frac{u^2}{2}\right)_x =0.1u_{xx},\ x \in [0,1]  
\label{E:burger_diff}
 \\
 &u(x,0)= \sin 4\pi x   \text{ and }u(0,t) = u(1,t) = 0. \nonumber
\end{align}
The given data are simulated with a first-order explicit method where $\delta x = 1/256$ and $\delta t = (\delta x)^2$ for  $t \in [0,0.1]$. Data are  downsampled from the numerical simulation by a factor of $4$ such that $\Delta x = 4\delta x$ and $\Delta t = 4\delta t$.  (We explore the effects of downsampling in more detail  in Section \ref{sec:noise}.)
\begin{figure}
\centering
\begin{tabular}{ccc}
(a) Given data & 
(b) Coherence pattern & 
(c) Result from Lasso \\
\includegraphics[width = 2.05in]{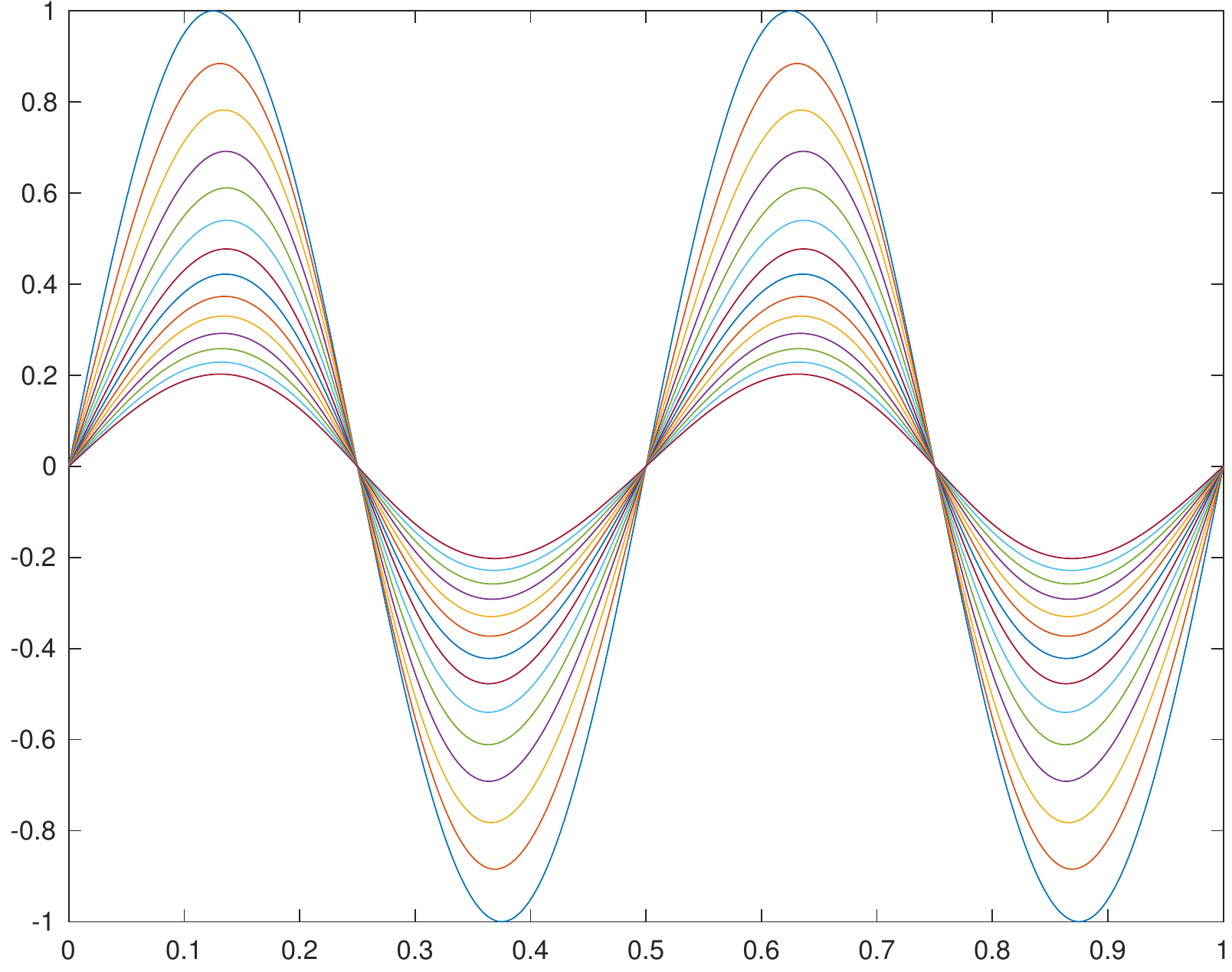} &
\includegraphics[width = 2.05in]{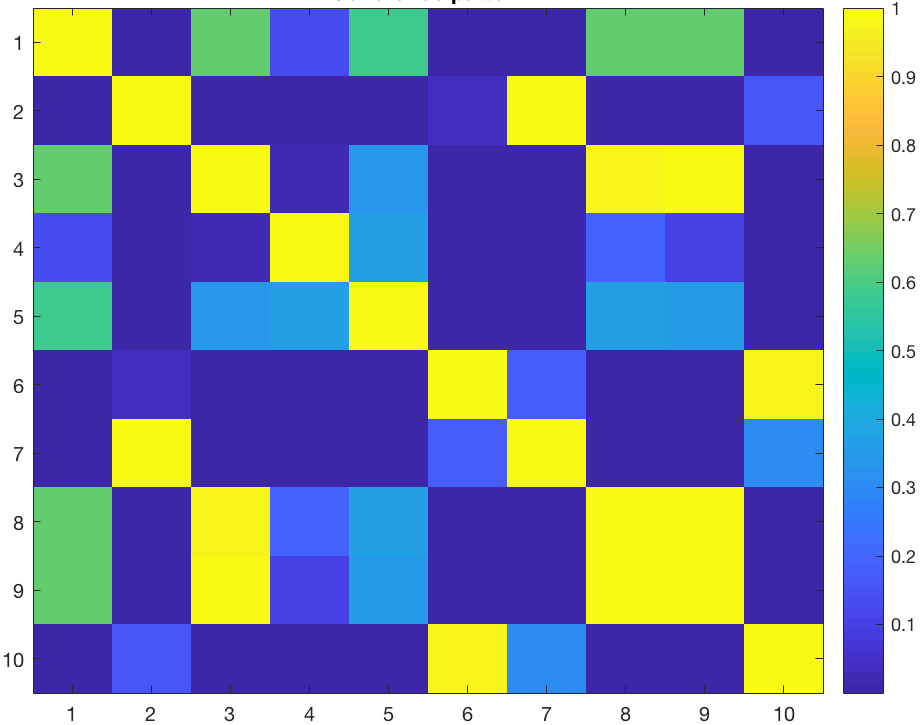} &
\includegraphics[width = 2.05in]{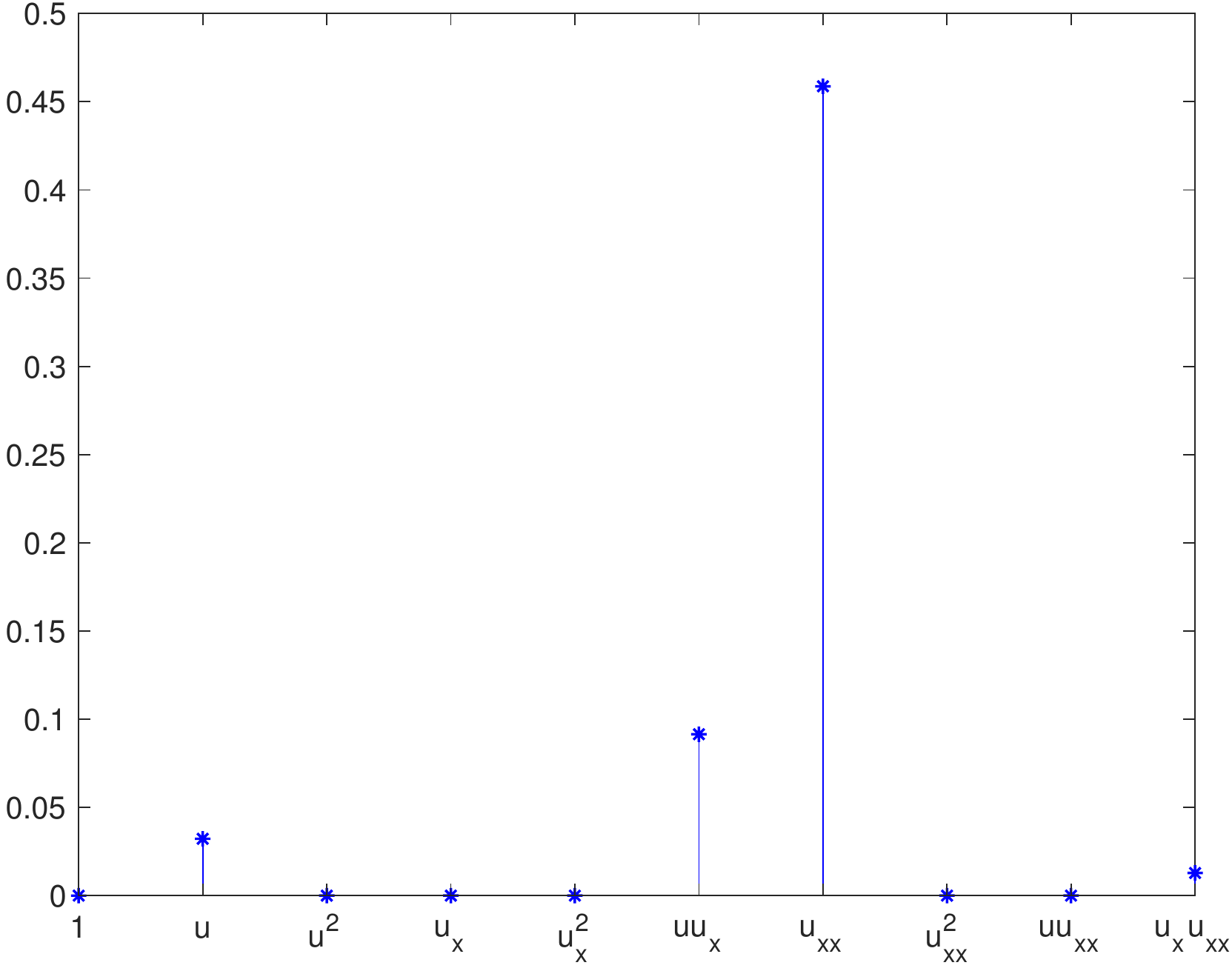}
\end{tabular}
\caption{Experiment with Burger's equation with a diffusion term \eqref{E:burger_diff}.  (a) The given data are numerically simulated and downsampled.   (b) shows that $u$ and $u_x u_{xx}$ are highly correlated with $u_{xx}$ and $u u_x$, respectively. From (c), four terms $u, uu_x, u_{xx} $ and $u_x u_{xx}$ are selected for TEE.   }
\label{Fig-BurgerDiffDemo}
\end{figure}
Figure \ref{Fig-BurgerDiffDemo} (a) shows the given data,  (b) displays the coherence pattern of $\hF$, and (c) shows the normalized coefficient magnitudes $\{\|\hF[j]\|_{L^1} \|\hF[j]\|_{\infty}^{-1} |\widehat{\ba}_{\text{Lasso}}(\lambda)_j|\}$.  In this case, the coherence pattern in (b) shows that $u$ and $u_x u_{xx}$ are highly correlated with $u_{xx}$ and $u u_x$, respectively, and therefore all four terms $u,uu_x,u_{xx},u_x u_{xx}$ are identified as meaningful ones by Lasso in (c).   Considering TEE for each subset refines these results:
\begin{center}
 \begin{tabular}{ | c | c | c |  }
    \hline
 Active terms  & Coefficients of active terms by least-squares & TEE  \\ \hline
    $u$ & $-16.08$ & $3709.77$      \\ 
    $u u_x$ & $-0.34$ & $67092.21$      \\ 
     $u_{xx}$ & $0.10$ & $4345.98$      \\ 
     $u_xu_{xx}$ & $-0.0008$ & $\infty$      \\ 
    $[u \ uu_x]$ & $ [-16.17 \ -0.50]$ & $2120.14$   \\
    $[u \ u_{xx}]$ & $ [-21.42 \ -0.03]$ & $1.49 \times 10^{26}$   \\
     $[u \ u_x u_{xx}]$ & $ [-16.44 \ -0.003]$ & $\infty$   \\
    \textcolor{blue}{$[uu_x \ u_{xx}]$} & \textcolor{blue}{$ [-1.00 \ 0.10]$} & \textcolor{blue}{$82.33$}   \\
      $[uu_{x} \ u_x u_{xx}]$ & $ [-12.03 \ -0.07]$ & $\infty$   \\
         $[u_{xx} \ u_x u_{xx}]$ & $ [-0.10 \ -0.006]$ & $371.08$   \\
    $[u \ uu_x \ u_{xx}]$ & $ [-0.10\ -1.00 \ 0.10]$ & $83.73$   \\
    $[u \ uu_x \ u_x u_{xx}]$ & $ [-15.86\ -1.03 \ -0.003]$ & $\infty$   \\
    $[u \ u_{xx} \ u_x u_{xx}]$ & $ [-0.58\ 0.10 \ 0.006]$ & $367.68$   \\
       \textcolor{red}{ $[uu_x \ u_{xx} \ u_x u_{xx}]$} & \textcolor{red}{$ [-1.00 \ 0.10 \ -1.35\times 10^{-5}]$} & \textcolor{red}{$82.29$}   \\
         $[u \ uu_x \ u_{xx} \ u_x u_{xx}]$ & $ [-0.11 \ -1.00 \ 0.10 \ -2.8\times 10^{-5}]$ & $83.85$   \\
    \hline
  \end{tabular}
\end{center}
The red line is the result of IDENT, while the blue line is the ground truth.   The TEE of  $[uu_x \ u_{xx} \ u_x u_{xx}]$ is the smallest, which is comparable with the TEE of the true equation with $[ uu_x \  u_{xx}]$.   One wrongly identified term in red, $u_x u_{xx}$, has a coefficient magnitude of $ -1.35 \times 10^{-5}$ which is negligible.  The level of error in the identification is also related to the total error to be explored in (\ref{E:enoise}).   Without TEE,  if all four terms are used from $L^1$ minimization, an additional wrong term $u$ is identified  with the coefficient $-0.11$. This is comparable to other terms with coefficients like -1 or 0.1, and cannot be ignored. 

Theorem \ref{thmLasso} proves that the identified coefficients from Lasso will converge to the ground truth as $\Delta t \rightarrow 0$ and $\Delta x \rightarrow 0$ (see Equation \eqref{E:errorbound} and \eqref{thmdist}), when there is no noise and the empirical feature matrix has a small coherence. Figure \ref{Fig-BurgerCoeffVersusDeltax}  shows the recovered  coefficients from Lasso versus $\Delta t$ and $\Delta x$ for the Burger's equation \eqref{E:burger} and Burger's equation with diffusion \eqref{E:burger_diff}.  
In Figure \ref{Fig-BurgerCoeffVersusDeltax} (a), data are sampled from the analytic solution of the Burger's equation \eqref{E:burger} with spacing $\Delta x = 2^{k}$ for $k=-12, -11, \dots, -5$ respectively and $\Delta t = \Delta x$ for $t \in [0,0.05]$.   Figure \ref{Fig-BurgerCoeffVersusDeltax} (a) shows the result from Lasso, namely, $\{ \|\hF[j]\|_{\infty}^{-1} \widehat{\ba}_{\text{Lasso}}(\lambda)_j\}$, versus $\log_2 \Delta x$. Notice that the coefficient of $u u_x$ converges to $-1$ and all other coefficients converge to $0$ as  $\Delta t$ and $\Delta x$ decrease.
For Figure \ref{Fig-BurgerCoeffVersusDeltax} (b), data are sampled from the numerical simulation of the Burger's equation with diffusion in \eqref{E:burger_diff}, where the PDE is solved by a first-order method with $\delta x = 2^{-10}$ and $\delta t = (\delta x)^2$ for $t \in [0,0.1]$.  Data are sampled with $\Delta x = 2^{-10},2^{-9},2^{-8},2^{-7},2^{-6}$ respectively, and $\Delta t = (\Delta x)^2$ for  $t \in [0,0.1]$.  Figure \ref{Fig-BurgerCoeffVersusDeltax} (b) shows the recovered coefficients from Lasso versus $\log_2 \Delta x$.  Here the coefficients of $u u_x$ and $u_{xx} $ converge to $-1$ and $0.1$ respectively, and all other coefficients except the one of $u$, converge to $0$, as $\Delta t$ and $\Delta x$ decrease.  
The coefficient of $u$ does not converge to $0$ because data are generated by a first order numerical scheme with the error $O[\delta t + (\delta x)^2]$, i.e.,  the error for Lasso $\|\be\|_{L^2}$ does not decay to $0$ as $\Delta t$ and $\Delta x$ decrease. We further discuss this aspect of data generations in Section \ref{sec:noise}.
 
\begin{figure}
\centering
\begin{tabular}{cc}
(a) Burger's equation & 
(b) Burger's equation with diffusion \\
\includegraphics[width = 2.7in]{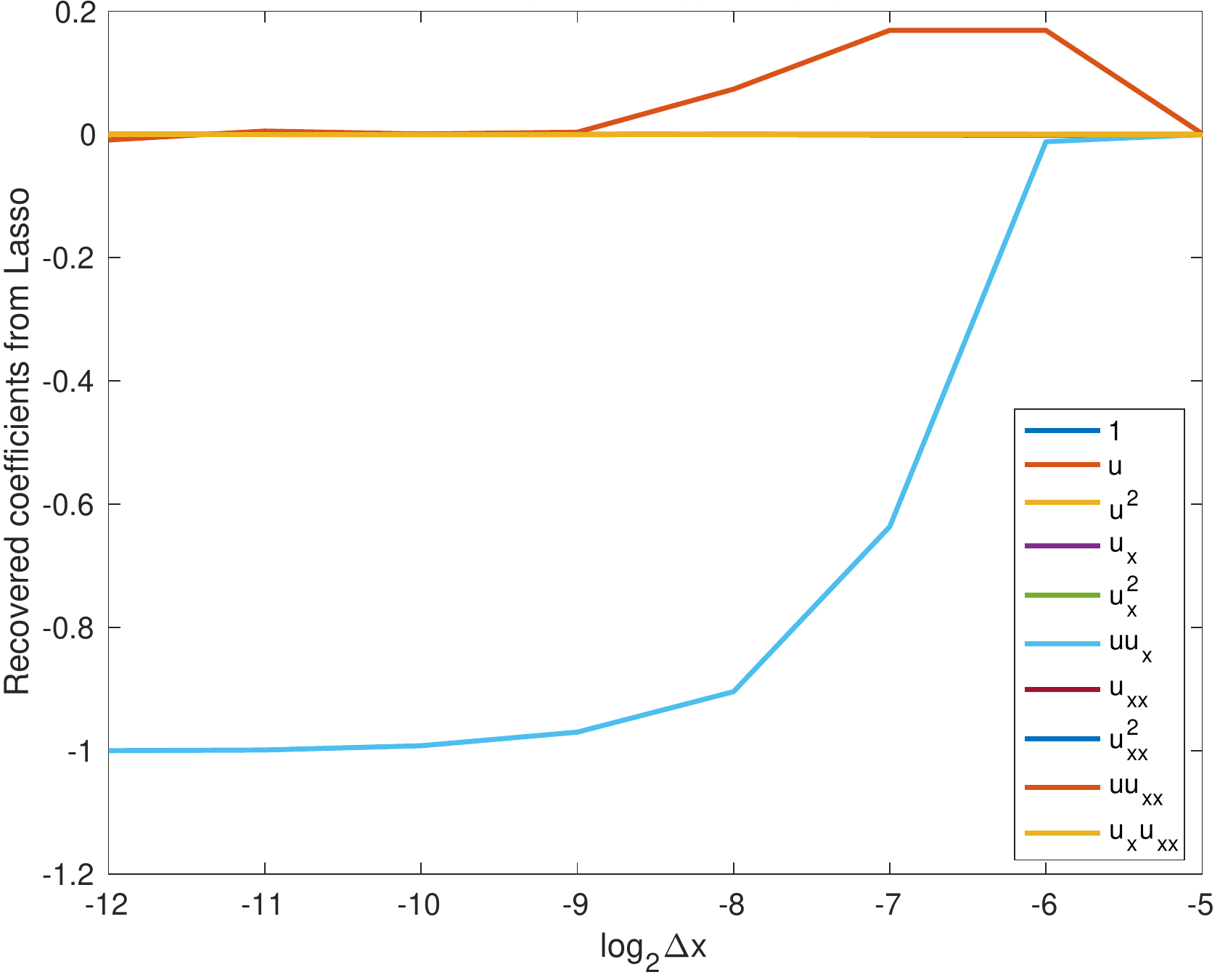} &
\includegraphics[width = 2.7in]{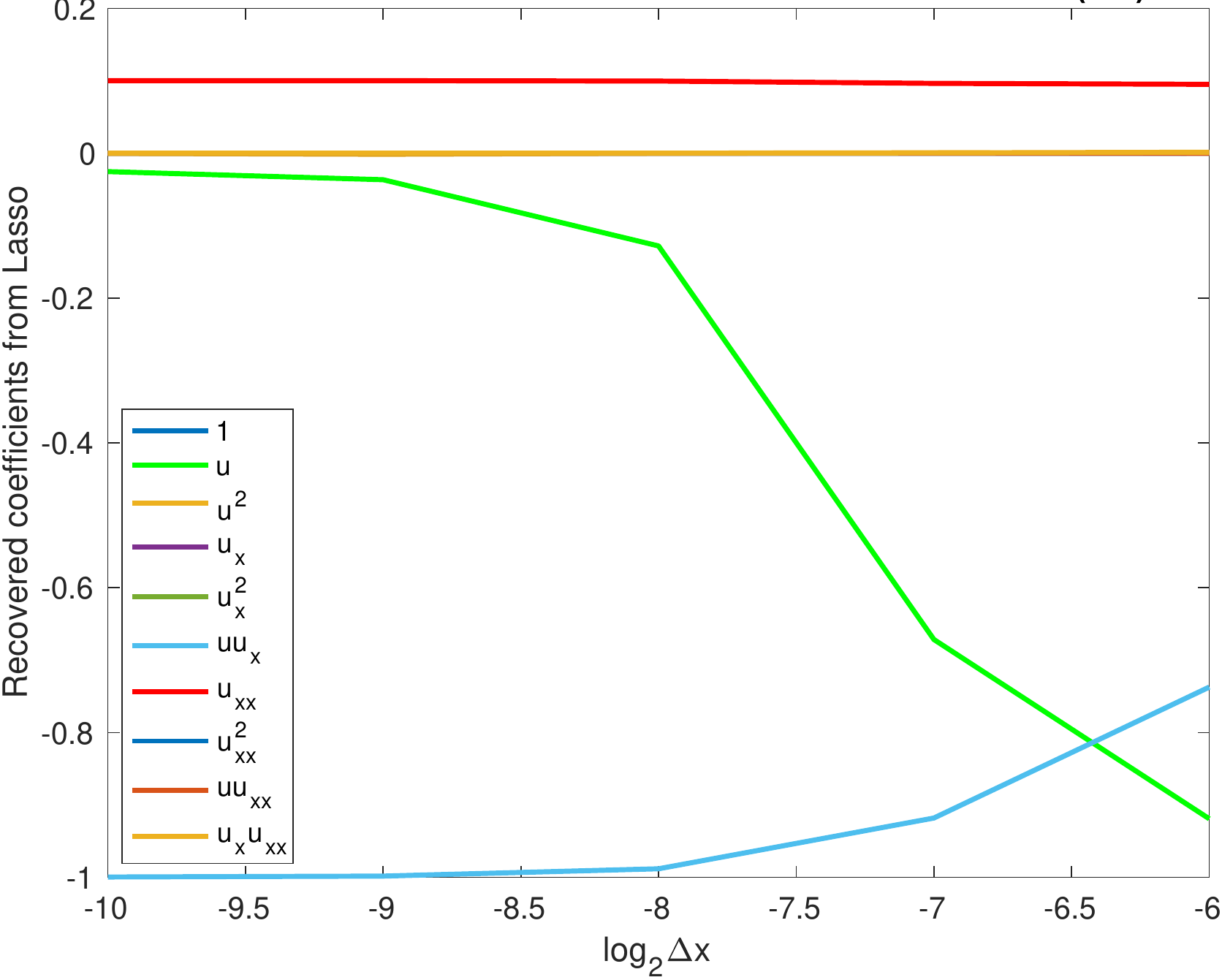} 
\end{tabular}
\caption{Identified coefficients from Lasso (Step 2 only) versus $\log_2\Delta x$.  In (a),  as $\Delta t$ and $\Delta x$ decrease (from right to left),  the coefficient of $u u_x$  correctly converges to 1, and all other terms correctly converge to 0.  In (b), as $\Delta t$ and $\Delta x$ decrease (from right to left), while the coefficients of $u_{xx}$ and $u u_x$ correctly converge to 0.1 and -1 respectively, one wrong term $u$ does not converge to 0, due to the error from data generations. }
\label{Fig-BurgerCoeffVersusDeltax}
\end{figure}

 \section{Noisy data, Downsampling and IDENT}\label{sec:noise}  

As noticed above, identification results depend on the accuracy of the given data.  
In this section,  we explore the effects of inaccuracy in data generations, noise and downsampling.  We derive an error formula to incorporate the errors arising from these three aspects, which provides a theoretical guidance of the difficulty of identification.  

The given data $\{\widetilde u_i^n\}$ may contain noise, such that 
$$\widetilde u_i^n = u_i^n + \xi_i^n,$$
where the noise $\xi_i^n$ arises from inaccuracy in data generations and/or the measurement error. 
Consider a $r$th order PDE with the highest-order spatial derivative $\partial_x^r u$.  
Suppose data are numerically simulated by a $q$th-order method with time step $\delta t$ and spacial spacing $\delta x$, and the measurement error is independently drawn from the normal distribution with mean $0$ and variance $\sigma^2$. Then
$$\xi_i^n =  O(\delta t + \delta x^q + \sigma).$$
We use the five-point ENO method to approximate the spatial derivatives in the empirical feature matrix $\hF$ in Section \ref{sec:ident}. In general, one could interpolate the data with a $p$th-order polynomial and use the derivatives of the polynomial to approximate $u_x$ and $u_{xx}$, etc. In this case, the error for the $k$th-order spacial derivative $\partial_x^k u$ is $O(\Delta x^{p+1-k})$.

The error for Lasso given by \eqref{E:linear1} is $\mathbf{e} =  \bb -\widehat{\bb} + (\widehat{F}-F)\ba +\bbeta $, where 
$\bb -\widehat{\bb}$ is from the approximation of $u_t$,  $(\widehat{F}-F)\ba$ is from the approximation of the spatial derivatives of $u$, and $\bbeta$ arrises from the finite element basis expansion for the varying coefficients. If $u,u_x,u_{xx},\ldots$ are bounded, these terms satisfy
\begin{align*}
\| (\widehat{F}-F)\ba\|_\infty \le O\left(\Delta x^{p+1-r} + \frac{\delta t + \delta x^q +\sigma}{\Delta x^r}\right) \text{ and } 
\|\bb -\widehat{\bb}\|_\infty \le O\left(\Delta t + \frac{\delta t + \delta x^q +\sigma}{\Delta t}\right),
\end{align*}
and $\|\bbeta\|_\infty  = O\left( 1/ L \right)$
so that 
\begin{equation}
\label{E:enoise}
\|\be\|_{L^2} \le \varepsilon, \text{ with }
\varepsilon = O\left(\Delta t + \Delta x^{p+1-r}+ \underbrace{\frac{\delta t + \delta x^q}{\Delta t}+ \frac{\delta t + \delta x^q }{\Delta x^r}}_{\text{errors from data generations}} + \underbrace{\frac{\sigma}{\Delta t} + \frac{\sigma}{\Delta x^r}}_{\text{measurement noise}} + \frac 1 L\right). 
\end{equation}
This error formula suggests the followings: 
\begin{description}

\item[Sensitivity to measurement noise] Finite differences are sensitive to measurement noise since  Gaussian noise with mean $0$ and variance $\sigma^2$ results in $O(\sigma/\Delta t + \sigma/\Delta x^r)$ in the error formula. Higher-order PDEs are more sensitive to measurement noise than lower-order PDEs. Denoising the given data is helpful to Lasso in general. 

\item[Downsampling of data] In applications, the given data are downsampled such that $\Delta t =  C_t \delta t$ and $\Delta x = C_x \delta x$ where $C_t$ and $C_x$ are the downsampling factors in time and space. Downsampling could help to reduce the error depending on the balances among the terms in \eqref{E:enoise}.
\end{description}

We further explore these effects below.  We propose an order preserving denoising method in Section \ref{subsec:lsma}, experiment IDENT with noisy data in Section \ref{subsecnoisyexperiment}, and discuss the downsampling of data in Section \ref{subsec:down}.

\subsection{An order preserving denoising method: Least-Squares Moving Average}   \label{subsec:lsma}

Our error formula in \eqref{E:enoise} shows that a small amount of noise can quickly increase the complexity of the recovery, especially for higher-order PDEs. Denoising is helpful in general. We propose an order preserving method which keeps the order of the approximation to the underlying function, while smooths out possible noise.  

Let the data $\{d_i\}$ be given on a one-dimensional uniform grid $\{x_i\}$ and define its five-point moving average as  $\tilde{d_i} = \frac{1}{5}\sum_{l=0,\pm1, \pm2} d_{i+l}$ for all $i$. 
At each grid point $x_i$, we determine a quadratic polynomial $p(x)= a_0 + a_1 (x-x_i) + a_2 (x-x_i)^2$ fitting the local data, which preserves the order of smoothness, up to the degree of polynomial. 
There are a few possible choices for denoising, such as (i) Least-Squares Fitting (LS): find $a_0, a_1$ and $a_2$ to minimize the functional
$ F( a_0, a_1, a_2 )=\sum_{{\rm some} \; j\; {\rm near} \; i} (p(x_j)- d_j)^2$;
(ii) Moving-Average Fitting (MA): find $a_0, a_1$ and $a_2$, such that the local average of the fitted polynomial matches with the local average of the data, $ {1}/{5} \sum_{l=0,\pm1, \pm2} p(x_{j+l}) = \tilde{d_j},$ for $j=i, i\pm 1$ (or another set of $3$ grid points near $\{x_i\}$).  
The polynomial generated by LS may not represent the underlying true dynamics.  Moving average fitting is better in keeping the underlying dynamics, however, the matrix may be ill-conditioned when a linear system is solved to determine $a_0,a_1,a_2$.  

We propose to use (iii) Least-Squares Moving Average (LSMA): find $a_0, a_1$ and $a_2$ to minimize the functional
\[G( a_0, a_1, a_2 )=  \sum_{j=i,i\pm1, i\pm2} \left\{\left[\frac{1}{5} \sum_{l=0,\pm1, \pm2} p(x_{j+l})\right] - \tilde{d_j}\right\}^2.\]
The condition number of this linear system tends to be better in comparison with MA, because $j$ is chosen from a larger set of indices. 
%
This LSMA denoising method preserves the approximation order of data and can easily be incorporated into numerical PDE techniques. MA fitting and LSMA are similar to the non-oscillatory polynomial reconstruction from cell averages which is a key step in high-resolution shock capturing schemes, see e.g. 
\cite{barth1990higher,ENO87,hu1999weighted}. The quadratic polynomials computed by the methods above are locally third-order approximation to the underlying function. We prove that, if the given data are sampled from a third-order approximation to a smooth function, then LSMA will keep the same order of the approximation. This theorem can be easily generalized to any higher order, we kept to $3$rd order to be consistent with our experiments in this paper.

\begin{theorem}
If data are given as a $3$rd order approximation to a smooth function, with or without additive noise, then denosing the data (to obtain a piecewise quadratic function) with the  Least-Squares Moving Average (LSMA) method will keep the same order of accuracy to the function. 
\end{theorem}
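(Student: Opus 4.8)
The plan is to exploit the fact that LSMA is a \emph{linear} least-squares reconstruction, so the map from data $\{d_i\}$ to the fitted quadratic coefficients $(a_0,a_1,a_2)$ is linear. Writing $h=\Delta x$ and decomposing the data as $d_i = f(x_i) + \epsilon_i + \xi_i$, where $f$ is the smooth target, $\epsilon_i = O(h^3)$ is the deterministic third-order approximation error, and $\xi_i$ is the additive noise, it suffices by linearity to analyze the three contributions separately. The heart of the matter is the noiseless smooth piece: I would show that LSMA applied to the samples $\{f(x_i)\}$ returns a quadratic $p$ with $p(x)=f(x)+O(h^3)$ throughout the five-point stencil, i.e. the reconstruction reproduces the degree-two Taylor polynomial of $f$ to one order beyond its degree. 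The $\epsilon$-part is then absorbed by the same estimate since it is already $O(h^3)$, and the noise part is treated at the end.

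The first technical ingredient is a set of exact moving-average identities on the stencil. Denoting $(Mq)(x_j)=\frac15\sum_{l=-2}^2 q(x_{j+l})$, direct summation gives, for the monomials centered at $x_i$, $M[1](x_{i+m})=1$, $M[(x-x_i)](x_{i+m})=mh$, $M[(x-x_i)^2](x_{i+m})=(m^2+2)h^2$, and $M[(x-x_i)^3](x_{i+m})=(m^3+6m)h^3$, using $\sum_{l=-2}^2 l^2 = 10$. Let $T$ be the degree-two Taylor polynomial of $f$ at $x_i$, so that $f(x)=T(x)+\tfrac16 f'''(x_i)(x-x_i)^3+O(h^4)$. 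Applying $M$ and the identities above, the averaged data $\tilde d_{i+m}=(Mf)(x_{i+m})$ differs from $(MT)(x_{i+m})$ by exactly $\tfrac16 f'''(x_i)(m^3+6m)h^3+O(h^4)$. Thus, in the least-squares normal equations, feeding in the \emph{Taylor coefficient vector} $\mathbf t=(f(x_i),f'(x_i),\tfrac12 f''(x_i))^{T}$ leaves a residual vector $\mathbf r$ of size $O(h^3)$ with an explicit, $h$-independent stencil shape $\mathbf v=(m^3+6m)_{m=-2}^{2}$.

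The second ingredient, and the crux of the argument, is to track the $h$-scaling through the least-squares solve. The $5\times3$ design matrix $A$ (rows $m$, columns the moving-averaged basis values $1$, $mh$, $(m^2+2)h^2$) factors as $A=BD$ with $D=\mathrm{diag}(1,h,h^2)$ and $B$ an $h$-independent matrix whose columns $1$, $m$, $m^2+2$ are linearly independent, so $B$ has full column rank and $B^\dagger=(B^TB)^{-1}B^T$ is bounded independently of $h$. Consequently the reconstruction map is $A^\dagger=D^{-1}B^\dagger$, and the coefficient error is $\mathbf a-\mathbf t = A^\dagger\mathbf r = D^{-1}B^\dagger\mathbf r$. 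Since $B^\dagger\mathbf r = \tfrac16 f'''(x_i)h^3\,B^\dagger\mathbf v+O(h^4)=O(h^3)$, the factor $D^{-1}$ produces the \emph{graded} bounds $a_0-f(x_i)=O(h^3)$, $a_1-f'(x_i)=O(h^2)$, $a_2-\tfrac12 f''(x_i)=O(h)$. The apparent loss of accuracy in $a_1,a_2$ is exactly compensated on the stencil: for $|x-x_i|=O(h)$, $p(x)-T(x)=(a_0-f(x_i))+(a_1-f'(x_i))(x-x_i)+(a_2-\tfrac12 f''(x_i))(x-x_i)^2=O(h^3)$, and since $T(x)=f(x)+O(h^3)$ we conclude $p(x)=f(x)+O(h^3)$.

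Finally, the third-order approximation error $\epsilon_i$ yields averaged data of size $O(h^3)$, which the same bounded operator $D^{-1}B^\dagger$ maps to a graded $O(h^3)$ reconstruction perturbation, so it does not degrade the order. The noise enters through $p^{(\xi)}$, whose coefficients are fixed linear combinations of the averaged noise $\tilde\xi_{i+m}=\frac15\sum_l \xi_{i+m+l}$; here the five-point average reduces the variance rather than the order, which is precisely the denoising effect, and it leaves the third-order accuracy to $f$ intact. I expect the main obstacle to be the scaling bookkeeping of the previous paragraph: the individual coefficient errors (the error in $a_2$ is only $O(h)$) look fatal, and the proof works only because the errors are graded in matching powers of $h$ and recombine with the $O(h)$ factors $(x-x_i)^k$ to restore $O(h^3)$. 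Making this rigorous requires the factorization $A=BD$ together with the $h$-uniform bound on $B^\dagger$, i.e. verifying that $B$ stays well-conditioned as $h\to0$; the secondary point is confirming the full column rank of $B$ so that the minimizer is unique.
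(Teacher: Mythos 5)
Your proposal is correct and follows essentially the same route as the paper's proof: comparing the fitted coefficients to the Taylor coefficient vector at the stencil center, factoring the design matrix as $A=BH$ with $H=\mathrm{diag}(1,\Delta x,\Delta x^2)$ and $B$ independent of $\Delta x$, and concluding $H(c-s)=(B^TB)^{-1}B^T\cdot O(\Delta x^3)$ so that the graded coefficient errors recombine to give $|p(x)-f(x)|=O(\Delta x^3)$ on the stencil. Your additional details (the explicit moving-average identities, the leading $(m^3+6m)h^3$ residual, and the full-column-rank check on $B$) merely make explicit what the paper leaves implicit.
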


\begin{proof} 
Let $f(x)$ be the smooth function. The proof can be done by comparing the quadratic function to that of the Taylor expansion of $f(x)$ at a grid point $x_{i_0}$, see e.g., \cite{LSTZ07}.
Let $p(x)=a_0+a_1(x-x_{i_0})+a_2(x-x_{i_0})^2$ be the quadratic function to be determined near $x_{i_0}$. The least-squares method solves the linear system $A^T(Ac-b)=0$ for the coefficient vector $c=[a_0, a_1,a_2]^T $, where $A$ is a $5\times 3$ matrix whose rows can be written as 
$[1, \frac15 \sum_{j=0,\pm 1, \pm 2}(x_{i+j}-x_{i_0}), \frac15 \sum_{j=0,\pm 1, \pm 2}(x_{i+j}-x_{i_0})^2], $
 for $i=i_0-2, \cdots, i_0+2$, and
$b=[\tilde{d}_{i_0-2},\cdots, \tilde{d}_{i_0+2}]^T.$

According to the assumption we have 
$$\tilde{d_i} = f(x_{i_0})+f'(x_{i_0})\frac15 \sum_{j=0,\pm 1, \pm 2}(x_{i+j}-x_{i_0})+
\frac12 f''(x_{i_0})\frac15 \sum_{j=0,\pm 1, \pm 2}(x_{i+j}-x_{i_0})^2+O(\Delta x^3),
$$
for any grid point $x_i$ near $x_{i_0}$, i.e.,  $|x_i-x_{i_0}|= O(\Delta x)$. Let $s=[f(x_{i_0}), f'(x_{i_0}), \frac12 f''(x_{i_0})]^T$. We have
$$A^T(Ac-b)= HB^T\{BH(c-s)+O(\Delta x^3)\},$$ 
where
$H$ is the $3\times 3$ diagonal matrix ${\rm diag}\{1, \Delta x, \Delta x^2\} $, and
$$B=A H^{-1}=\left [
\begin{array}{lcr}
&\vdots & \\
1 & \frac15 \sum_{j=0,\pm 1, \pm 2}\frac{x_{i_0+j}-x_{i_0}}{\Delta x} & \frac15 \sum_{j=0,\pm 1, \pm 2}\frac{(x_{i_0+j}-x_{i_0})^2}{\Delta x^2} \\
&\vdots & 
\end{array}
\right ].
$$
Therefore $H(c-s)= (B^TB)^{-1}B^T\cdot O(\Delta x^3)$. Note that $B$ is independent of $\Delta x$, we have $|p(x)-f(x)|= O(\Delta x^3)$ for all $x$ such that $|x-x_{i_0}|= O(\Delta x)$.
\end{proof}


\subsection{IDENT experiments for noisy data}\label{subsecnoisyexperiment}
We next present numerical experiments with noisy data. We say $P$ percent Gaussian noise is added to the noise-free data $\{u_i^n : i=1,\ldots,N_1 \text{ and } n=1,\ldots,N_2\}$, if the observed data are $\{\widetilde u_i^n\}$ where $\widetilde u_i^n = u_i^n + \xi_i^n$ and  $\xi_i^n \sim \mathcal{N}(0,\sigma^2)$ with 
$\sigma = \frac{P}{100} \sqrt{\sum_{i=1}^{N_1} \sum_{n=1}^{N_2} |u_i^n|^2}/\sqrt{N_1 N_2}$.

\begin{figure}
\centering
\begin{tabular}{ccc}
(a) Given data & 
(b) Coherence pattern & 
(c) Result from Lasso \\
\includegraphics[width = 2.05in]{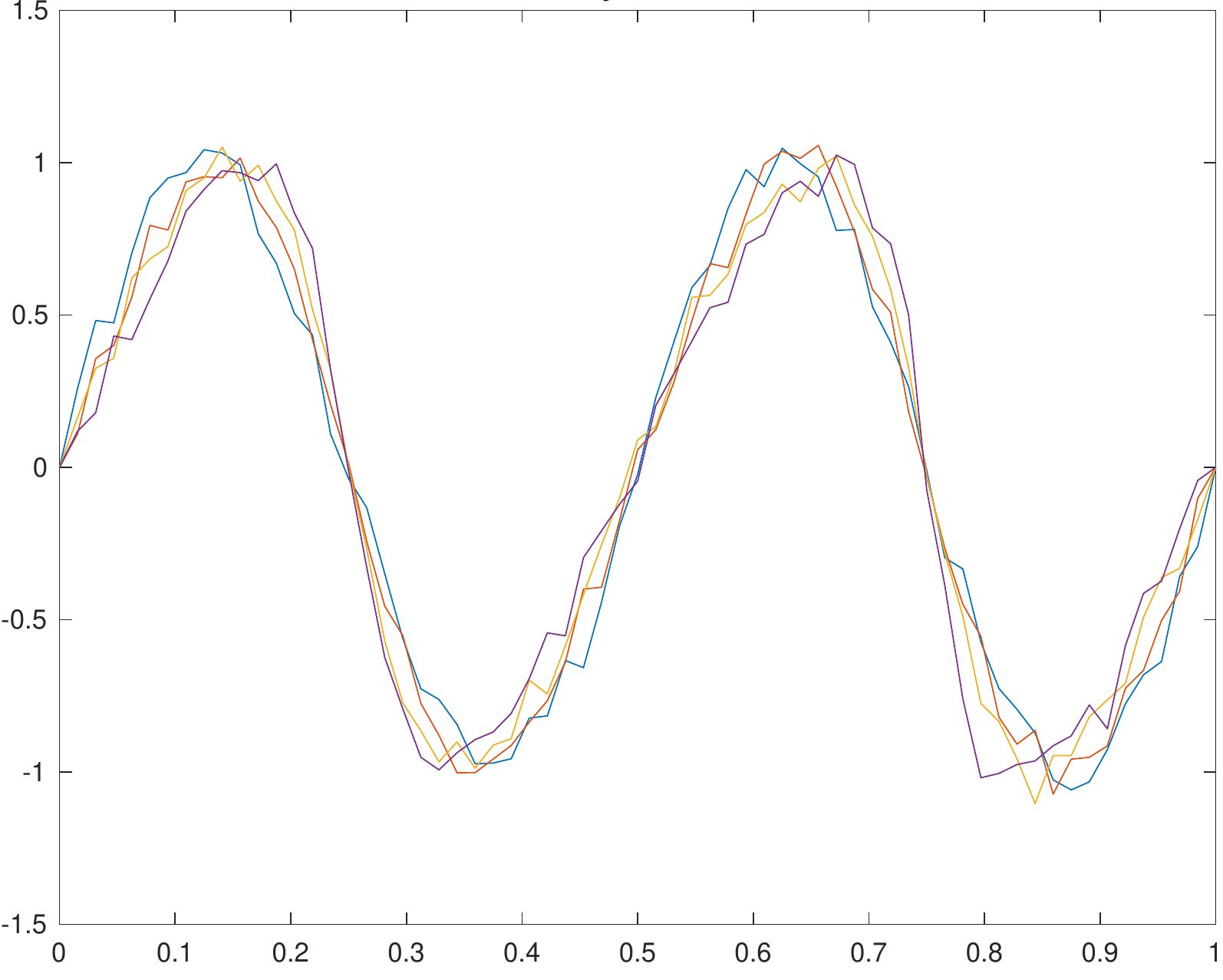} &
\includegraphics[width = 2.05in]{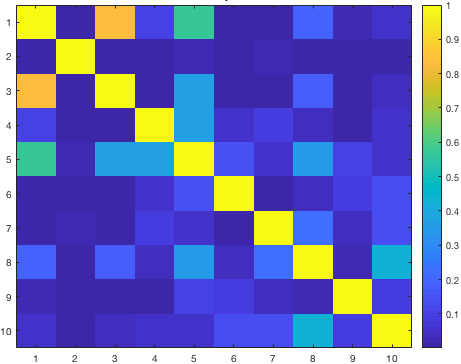} &
\includegraphics[width = 2.05in]{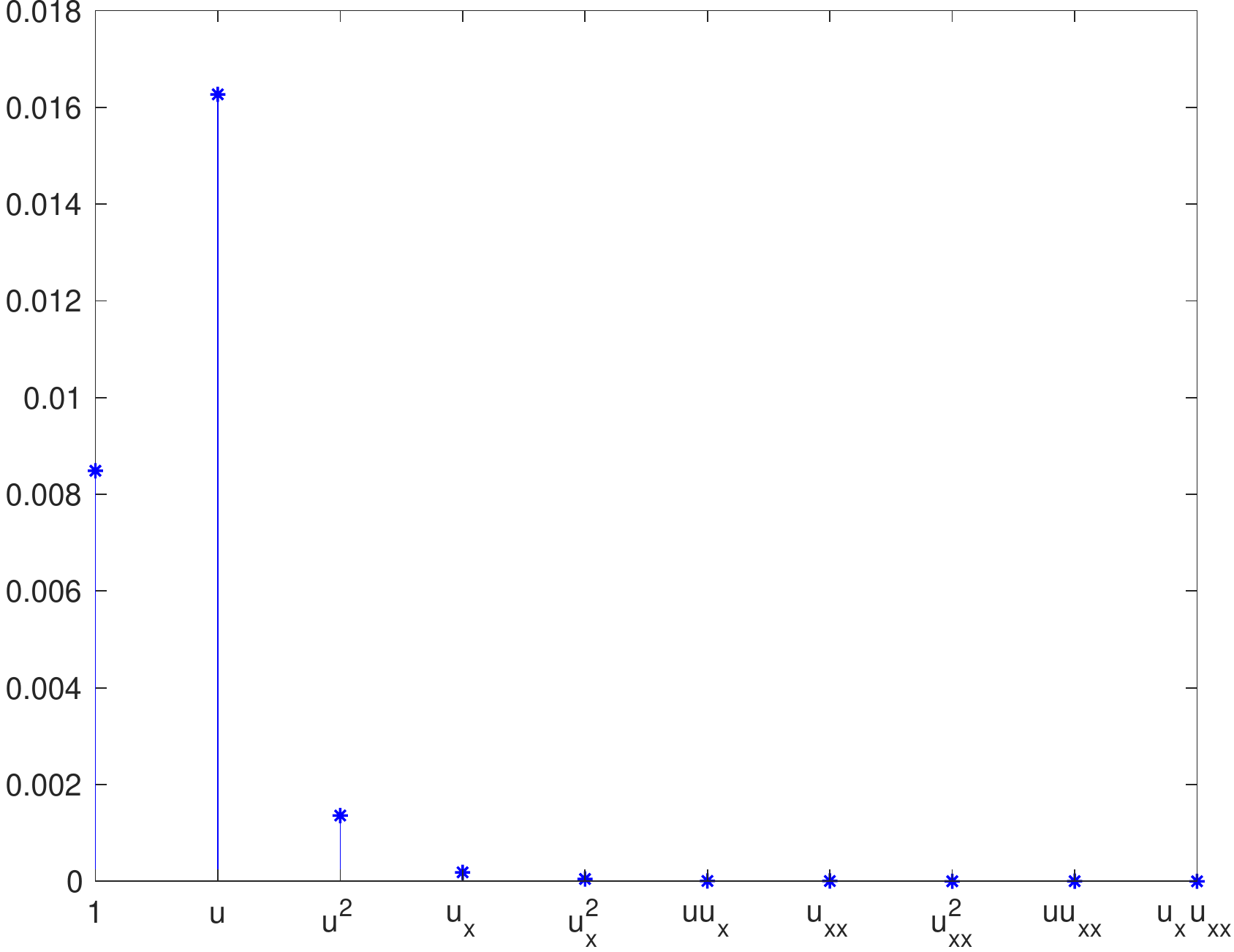}
\end{tabular}
\caption{Burger's equation in \eqref{E:burger} with $8\%$ Gaussian noise. (a) Given noisy data, (b) Coherence pattern of the feature matrix. (c) The normalized coefficient magnitudes from Lasso. This fails to identify the correct term $uu_x$. }\label{Fig-BurgerExactDemoNoise8}
\end{figure}

Our first experiment is on the Burger's equation in \eqref{E:burger} with $8\%$ Gaussian noise. Data are sampled from the analytic solution with $\Delta x = 1/56$ and $\Delta t = 0.004$ for $t \in [0,0.05]$, and then $8\%$ Gaussian noise is added. 
For comparison, we do not denoise the given data, but directly applied IDENT.  
Figure \ref{Fig-BurgerExactDemoNoise8} (a) shows the noisy given data, (b) shows the coherence pattern, and (c) shows the normalized coefficient magnitudes from Lasso.  The NSR for Lasso defined in \eqref{eqnsr} is 3.04.  Lasso fails to include the correct set of terms, thus TEE identifies the wrong equation $u_t=-0.59u^2$ as a solution. 

\begin{center}
\begin{tabular}{ | c | c | c ||  c |c|c| } 
     \hline
    Active terms & Coefficients & TEE  
    & Active terms & Coefficients & TEE   \\ \hline
    $1$ & $-0.27$ & $94.20$
    &   $[1 \ u  ]$ & $[-0.27 \ 1.17]$ & $99.15$ \\ 
    $u$ & $ 1.17$ & $99.36$
    &   $[1 \ u^2 ]$ & $[0.18\ -0.83]$ & $94.04$ \\ 
    \textcolor{red}{$u^2$} & \textcolor{red}{$ -0.59$} & \textcolor{red}{$94.03$}
    &   $[u \ u^2]$ & $[1.17\ -0.59]$ & $98.85$ \\ 
    $[1 \ u \ u^2]$ & $[0.19 \ 1.17 \ -0.84]$ & $98.81$
    &  & & \\
      \hline
  \end{tabular}
\end{center}

\begin{figure}[h!]
\centering
\begin{tabular}{ccc}
(a) Given data & 
(b) Coherence pattern & 
(c) Result from Lasso \\
\includegraphics[width = 2.05in]{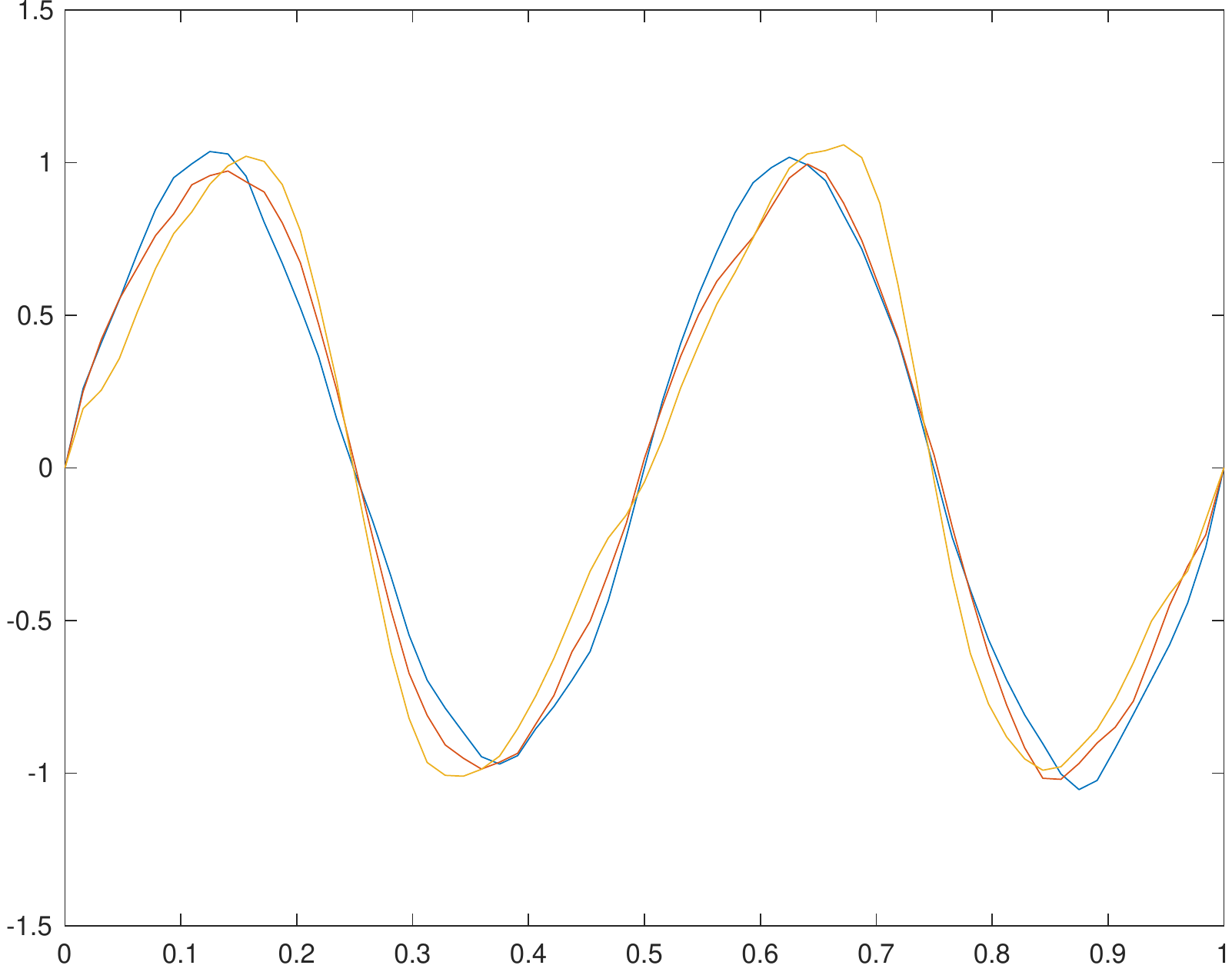} &
\includegraphics[width = 2.05in]{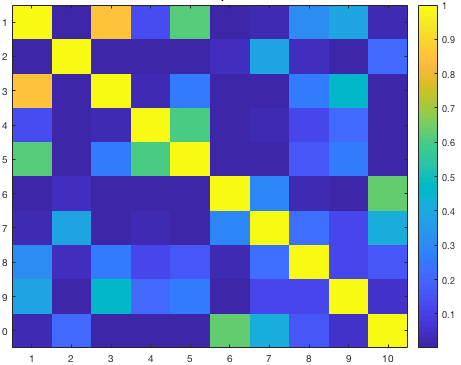} &
\includegraphics[width = 2.05in]{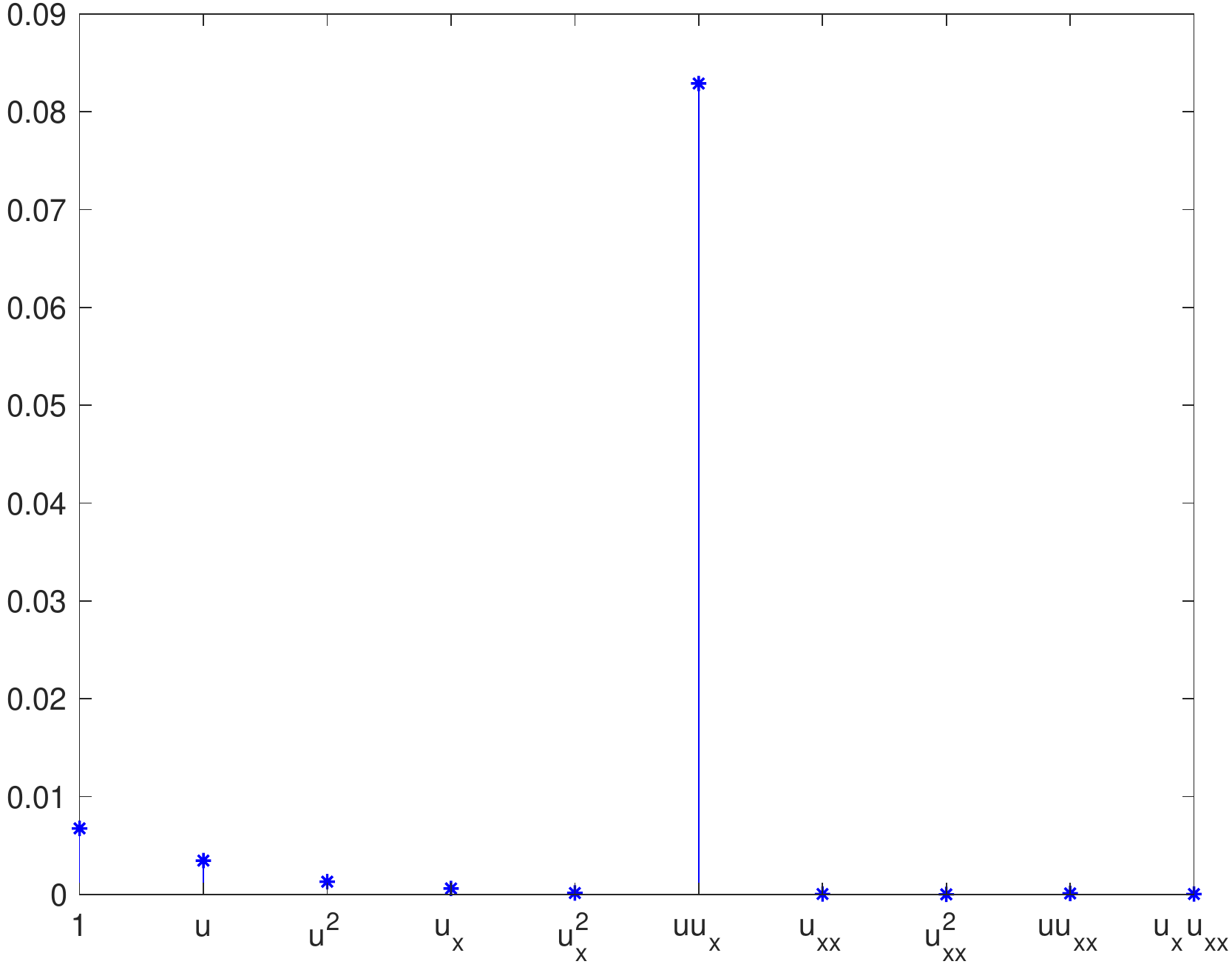}
\end{tabular}
\caption{Burger's equation in \eqref{E:burger} with $8\%$ Gaussian noise as in Figure \ref{Fig-BurgerExactDemoNoise8}. (a)  The data  after the LSMA denoising.  (b) Coherence pattern of $\hF$. (c) The normalized coefficient magnitudes from Lasso identifies $1,u$ and $uu_x$ which include the correct term $u u_x$.  }\label{Fig-BurgerExactDemoNoise8_LSMA}
\end{figure}
For the same data, Figure \ref{Fig-BurgerExactDemoNoise8_LSMA} and the table below show the  results when LSMA denoising is applied.  After denoising, and the given data is  noticeably smoother when Figure \ref{Fig-BurgerExactDemoNoise8_LSMA} (a) is compared with Figure \ref{Fig-BurgerExactDemoNoise8} (a).  
The Lasso result shows great improvement in Figure \ref{Fig-BurgerExactDemoNoise8_LSMA} (c).  With the correct terms included in the Step 2 of Lasso,  TEE determines the PDE with correct feature: $u_t = -0.92 u u_x$.
\begin{center}
\begin{tabular}{ | c | c | c |  c |c|c| } 
     \hline
    Active terms & Coefficients & TEE 
    & Active terms & Coefficients & TEE   \\ \hline
    $1$ & $-0.25$ & $87.10$
    &   $[1 \ u  ]$ & $[-0.25 \ 0.27]$ & $87.94$ \\ 
    $u$ & $ 0.27$ & $87.89$
    &   $[1 \ u u_x ]$ & $[-0.22\ -0.92]$ & $29.5412$ \\ 
    \textcolor{red}{$uu_x$} & \textcolor{red}{$ -0.92$} & \textcolor{red}{$29.5409$}
    &   $[u \ uu_x]$ & $[0.07\ -0.92]$ & $29.81$ \\ 
    $[1 \ u \ u u_x]$ & $[-0.22 \ 0.07 \ -0.92]$ & $29.80$
    &  & &\\
      \hline
  \end{tabular}
\end{center}

In the next set of experiments, we explore different levels of noise for denosing+IDENT.  In Figure \ref{Fig-BurgerExactVersusNoise}, we experiment on the Burger's equation \eqref{E:burger} with its analytic solution sampled in the same way as above, while the noise level increases from $0$ to $30\%$.   For each noise level, we (i) first generate data with $100$ sets of random noises, (ii) denoise by LS and LSMA  for a comparison respectively, then (iii) run IDENT. The parameter $\tau$ is chosen as $10\%$ of the largest coefficient magnitude.  
Figure \ref{Fig-BurgerExactVersusNoise} (a) represents how likely wrong results are found.  It is computed  by  the average ratio between the wrong coefficients and all computed coefficients: $\textstyle \sum_{j \in \widehat{\Lambda}\setminus \Lambda} |\widehat{\ba}_j|/\|\widehat{\ba}\|_{1}$, where $\Lambda$ and $\widehat{\Lambda}$ are the exact support and the identified support, respectively. Each bar plot represents the standard deviation of the results among 100 trials. 
The green curves denoised by LSMA show the most stable results even as the noise level increases. 
Figure \ref{Fig-BurgerExactVersusNoise} (b) shows the recovered coefficient of $uu_x$, where the true value is $-1$.   Notice that the LSMA+IDENT (green points) results are closer to $-1$, while others find wrong coefficients more often.  In general, denoising the given data with LSMA improves the result significantly.

\begin{figure}
\centering
\begin{tabular}{cc}
(a) Ratio of wrong coefficients versus noise & 
(b) Coefficient of $uu_x$, if $uu_x$ is identified \\ 
\includegraphics[width = 2.5in]{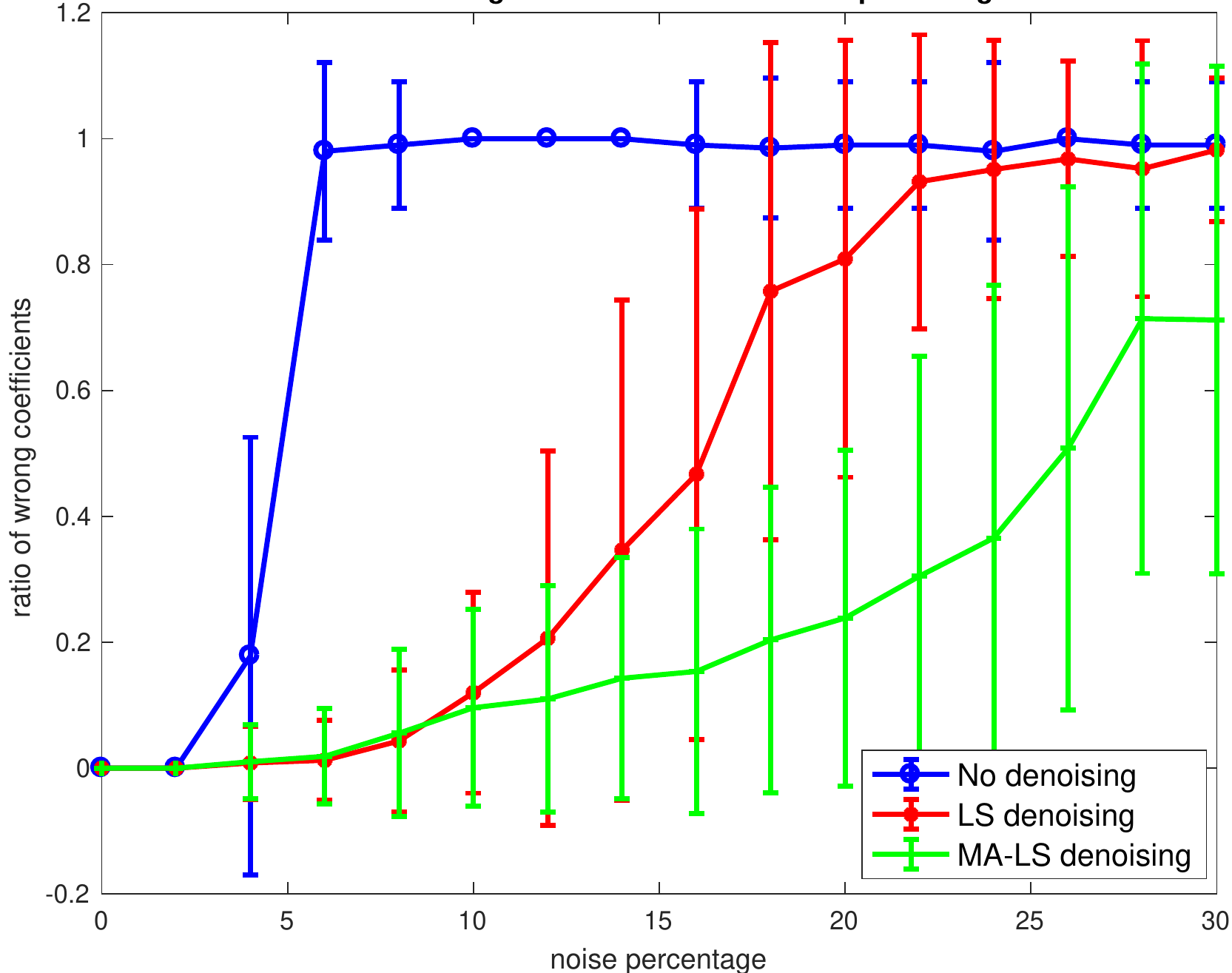} &
\includegraphics[width = 2.5in]{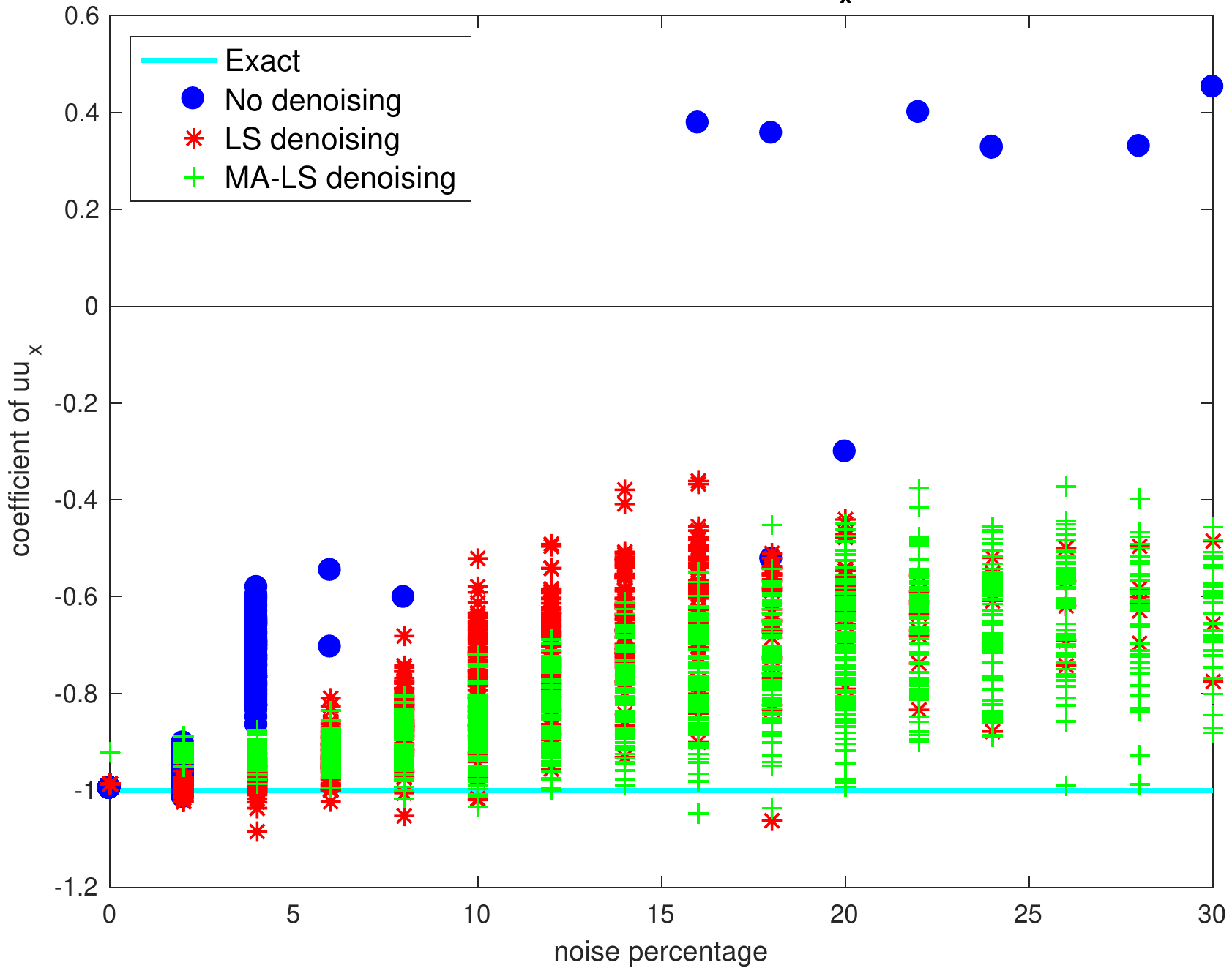} \\
\end{tabular}
\begin{tabular}{ | c   |c|c|c|c|c|c|c| c|} 
    \hline
  Added noise in  \% & 0 & 4  & 8  & 12 & 16 & 20 & 24 & 28
    \\ 
    \hline
    Added noise in new NSR \eqref{eqnsr} & 0.04 &   2.18&    3.09&    3.33&    3.40&    3.31&    3.31 &     3.23  \\
    \hline
     \end{tabular}
\caption{Burger's equation \eqref{E:burger} with increasing noise levels. (a) The average ratio between the identified wrong coefficients and all identified coefficients over $100$ trails. (b) The recovered coefficient of $u u_x$ by IDENT. Denoising the given data with LSMA significantly improves the result. The table shows the new NSR \eqref{eqnsr} corresponding to the noise level given in percentage.} 
\label{Fig-BurgerExactVersusNoise}
\end{figure}

Figure \ref{Fig-BurgerDiffVersusNoise} shows the Burger's equation with diffusion in \eqref{E:burger_diff} with varying noise levels.  The given data are sampled in the same way as in Figure \ref{Fig-BurgerDiffDemo}, the noise level increases from $0$ to $0.12\%$.   (a) shows the average ratio between the wrong coefficients and the total coefficients.   (b) and (c) show the recovered coefficients of $uu_x$ and $u_{xx}$, respectively.   Again using LSMA shows better performance. 
\begin{figure}
\centering
\begin{tabular}{ccc}
(a) Ratio of wrong coefficients & 
(b) Coefficient of $uu_x$ &
(c) Coefficient of $u_{xx}$ \\ 
\includegraphics[width = 2.05in]{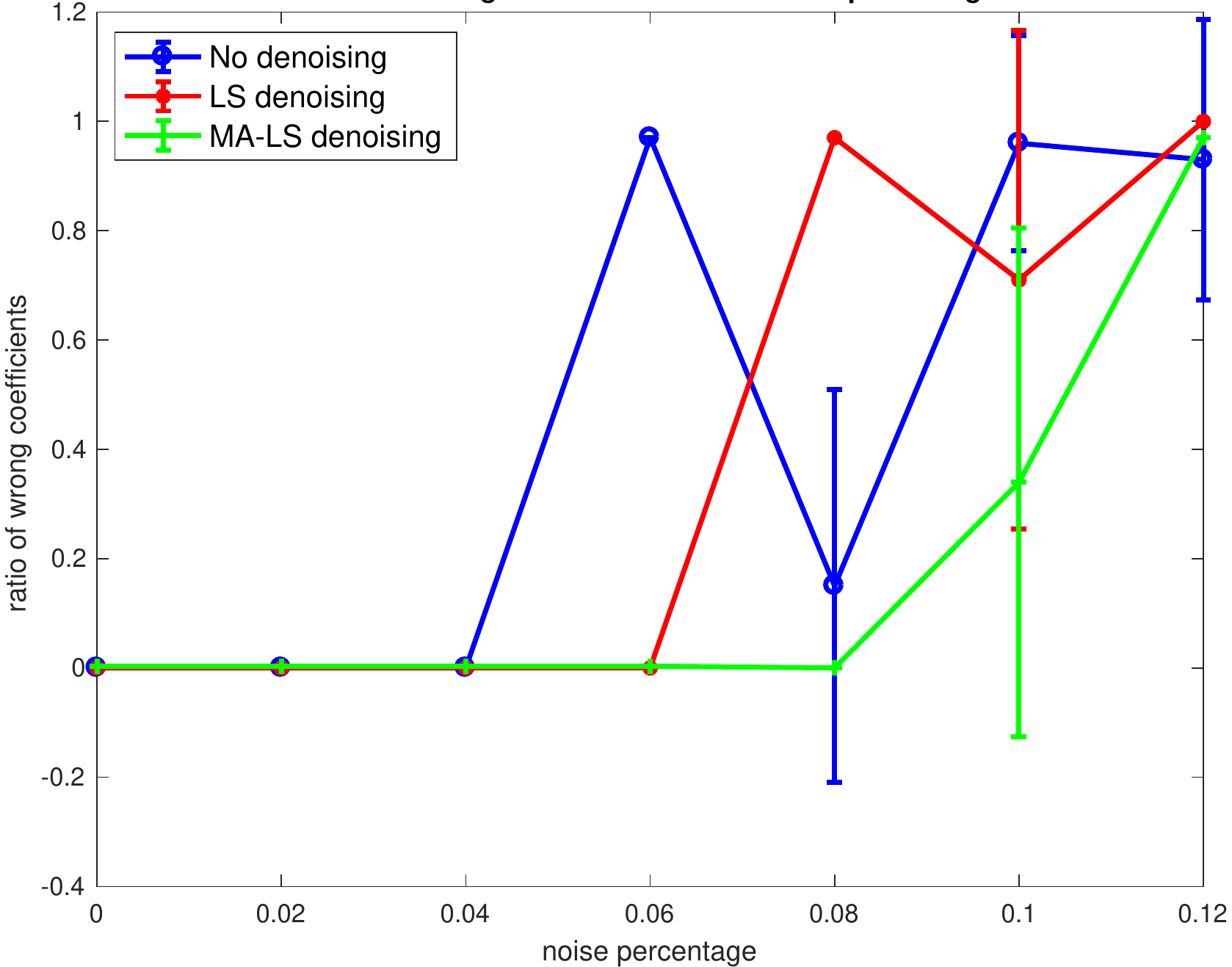} &
\includegraphics[width = 2.05in]{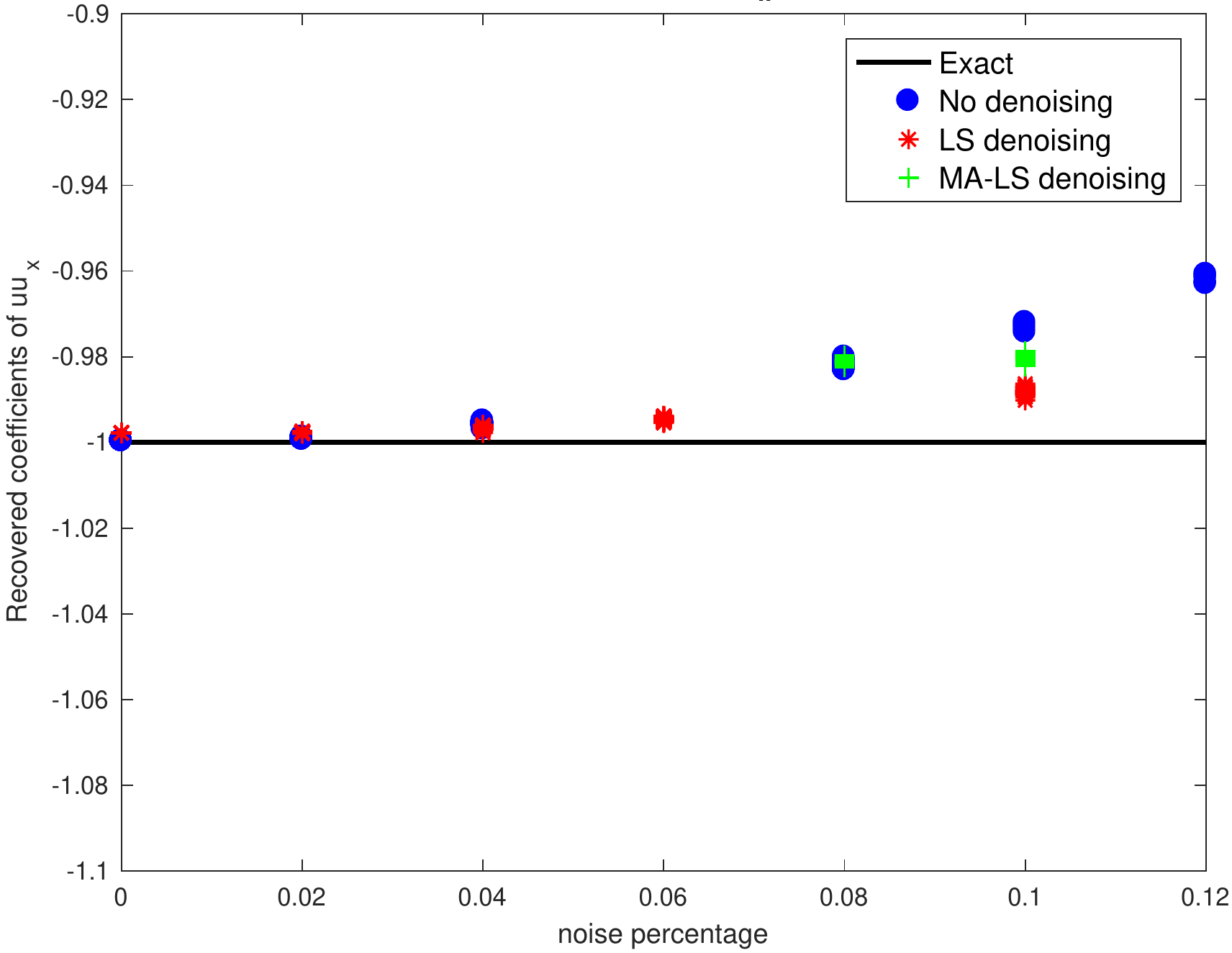} &
\includegraphics[width = 2.05in]{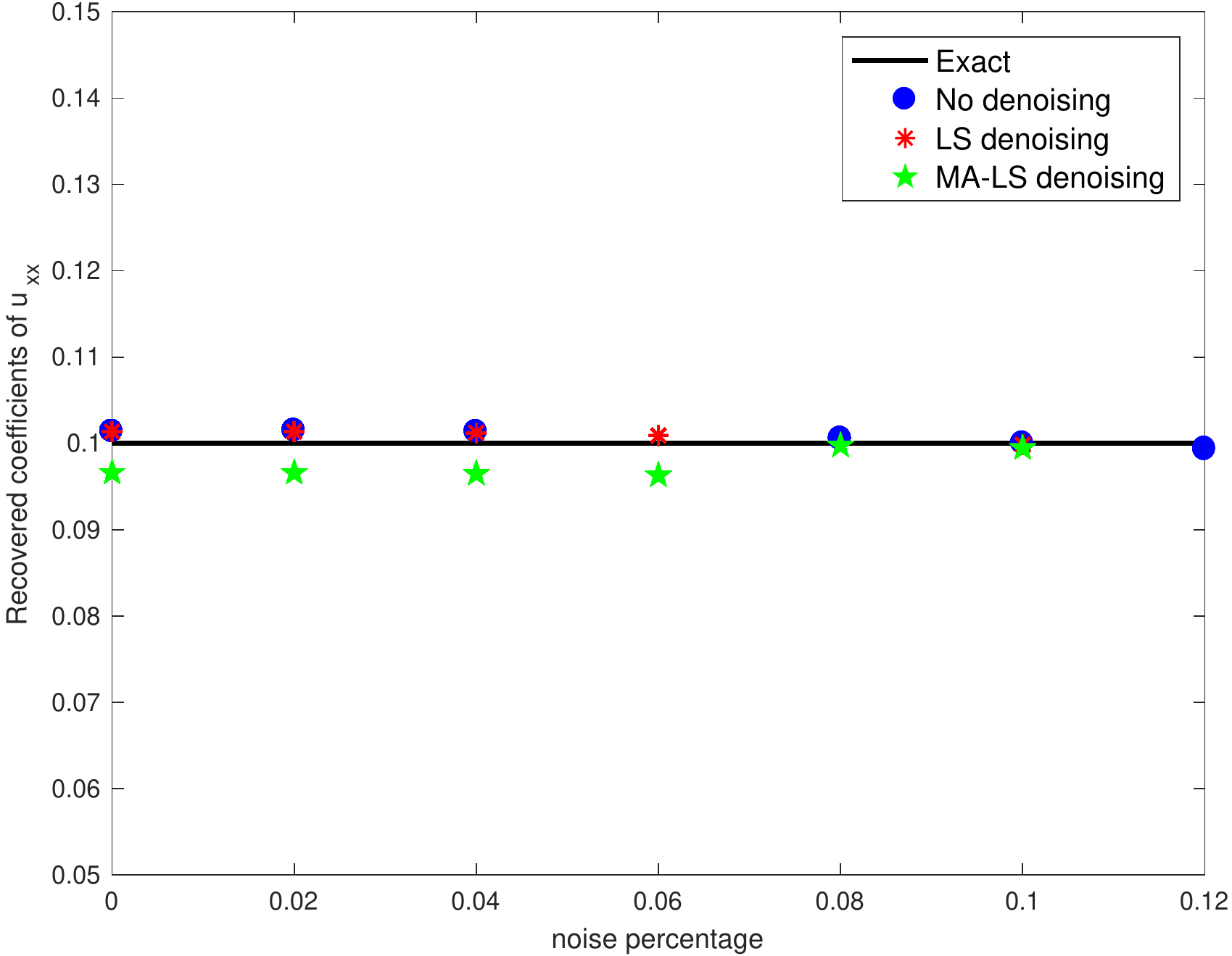} \\
\end{tabular}
\begin{tabular}{ | c | c | c |  c |c|c|c|c| } 
    \hline
  Added noise in  \%   & 0 &   0.02&    0.04 &    0.06 &    0.08&    0.10 & 0.12     \\ 
    \hline
   Added noise in new NSR \eqref{eqnsr} &     0.05&    2.02&    4.04&    6.06&    8.08&    10.10&    12.13   \\
    \hline
     \end{tabular}
\caption{Burger's equation with diffusion in \eqref{E:burger_diff} with varying noise levels. (a) The average ratio between the identified wrong coefficients and all identified coefficients over $100$ trails. (b) and (c) the computed coefficients of $u u_x$ and $u_{xx}$ respectively by IDENT.  While the noise level in percentage seems small, the new NSR represents the severeness of noise for PDE with high order derivatives. }
\label{Fig-BurgerDiffVersusNoise}
\end{figure}

For both Figures \ref{Fig-BurgerExactVersusNoise} and \ref{Fig-BurgerDiffVersusNoise}, we present the new NSR defined in \eqref{eqnsr}.  This clearly presents that noise affects different PDEs in different ways.  The Burger's equation \eqref{E:burger} only have first order derivatives, while the Burger's equation with diffusion in \eqref{E:burger_diff} has a second order derivative.  This seemingly small difference makes a big impact on the NSR and identification.  While in Figure \ref{Fig-BurgerExactVersusNoise}, the noise level is experimented up to $30 \%$, its corresponding new NSR varies only from 0 to less than 3.5.  In Figure \ref{Fig-BurgerDiffVersusNoise}, the noise level only varies from 0 to 0.12 in percentage, however, this corresponds to new NSR varying form 0 to above 12.  The level of the new NSR characterizes the difficulty of identification using IDENT (Step 2, Lasso), since having a higher-order term affects the Lasso  negatively, especially in the presents of noise.

\subsection{Downsampling effects and IDENT}\label{subsec:down} 
 
 In applications, data are often collected on a coarse grid to save the expenses of sensors. We explore the effect of downsampling in data collections in this section. 
 Consider a $r$th order PDE. Simulating its solution with a $q$th order method on a fine grid with time step $\delta t $ and spatial spacing $\delta x$ gives rise to the error $O(\delta t + \delta x^q)$.
 Suppose data are downsampled by a factor of $C_t$ in time and $C_x$ in space, such that data are sampled with spacing $\Delta t = C_t \delta t$ and $\Delta x = C_x \delta x$. Our error formula in \eqref{E:enoise} is crucially dependent on the downsampling factors $C_t$ and $C_x$. Each term is affected by downsampling differently.
 \begin{itemize}
   \setlength\itemsep{-0.02cm}
 \item{The term $\Delta t + \Delta x^{p+1-r}$ arises from the approximation of time and spatial derivatives. It increases as the downsampling factors $C_t$ and $C_x$ increase.} 
 \item{The term $\frac{\delta t + \delta x^q}{\Delta t} + \frac{\delta t + \delta x^q}{\Delta x^r}$ arises from the error in data generations. It decreases as the downsampling factors $C_t$ and $C_x$ increase. }
 \item{The term $\frac{\sigma}{\Delta t} + \frac{\sigma}{\Delta x^r}$ arises from the measurement noise. It decreases as the downsampling factors $C_t$ and $C_x$ increase.}
 \end{itemize}
Therefore,  downsampling may positively affect the identification depending on the balance among these three terms.  
 
 As a numerical example, we consider the Burger's equation in \eqref{E:burger} with different downsampling factors. The analytic solution is evaluated on the grid with spacing $\delta x = 1/1024$ and $\delta t = 0.001$ for $t \in [0,0.05]$. After evaluating the analytic solution, we generate $100$ sets of random noises then downsample the noisy data with spacing $\Delta x = C_x \delta x$ and $\Delta t = C_t \delta t$ where $C_x = C_t = 1,2,2^2,2^3,2^4$, and $2^5$  respectively. We run IDENT on the given downsampled noisy data, denoised by LS and LSMA respectively.
Figure \ref{Fig-BurgerExactVersusDownsample} displays the ratio of wrong coefficients by IDENT versus $\log_2 C_x$ in the presence of $5\%$ or $10\%$ Gaussian noise.  We observe that increasing downsampling rates can positively affect the result until the downsampling rates become too large.  LSMA also gives the best performance.
 
\begin{figure}
\centering
\begin{tabular}{cc}
(a) Ratio of wrong coefficients with $5\%$ noise & 
(a) Ratio of wrong coefficients with $10\%$ noise \\ 
\includegraphics[width = 3.3in]{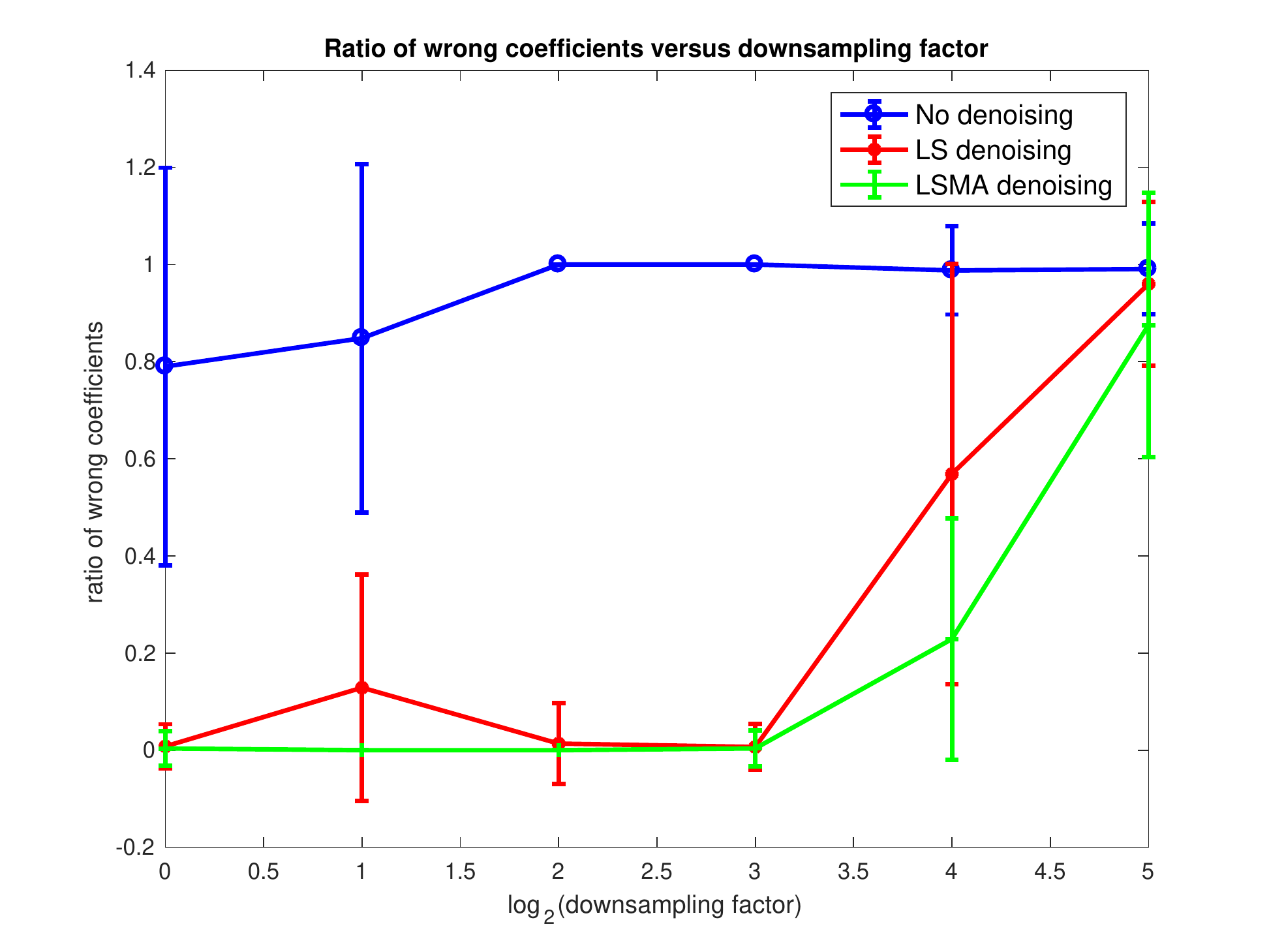} &
\includegraphics[width = 3.3in]{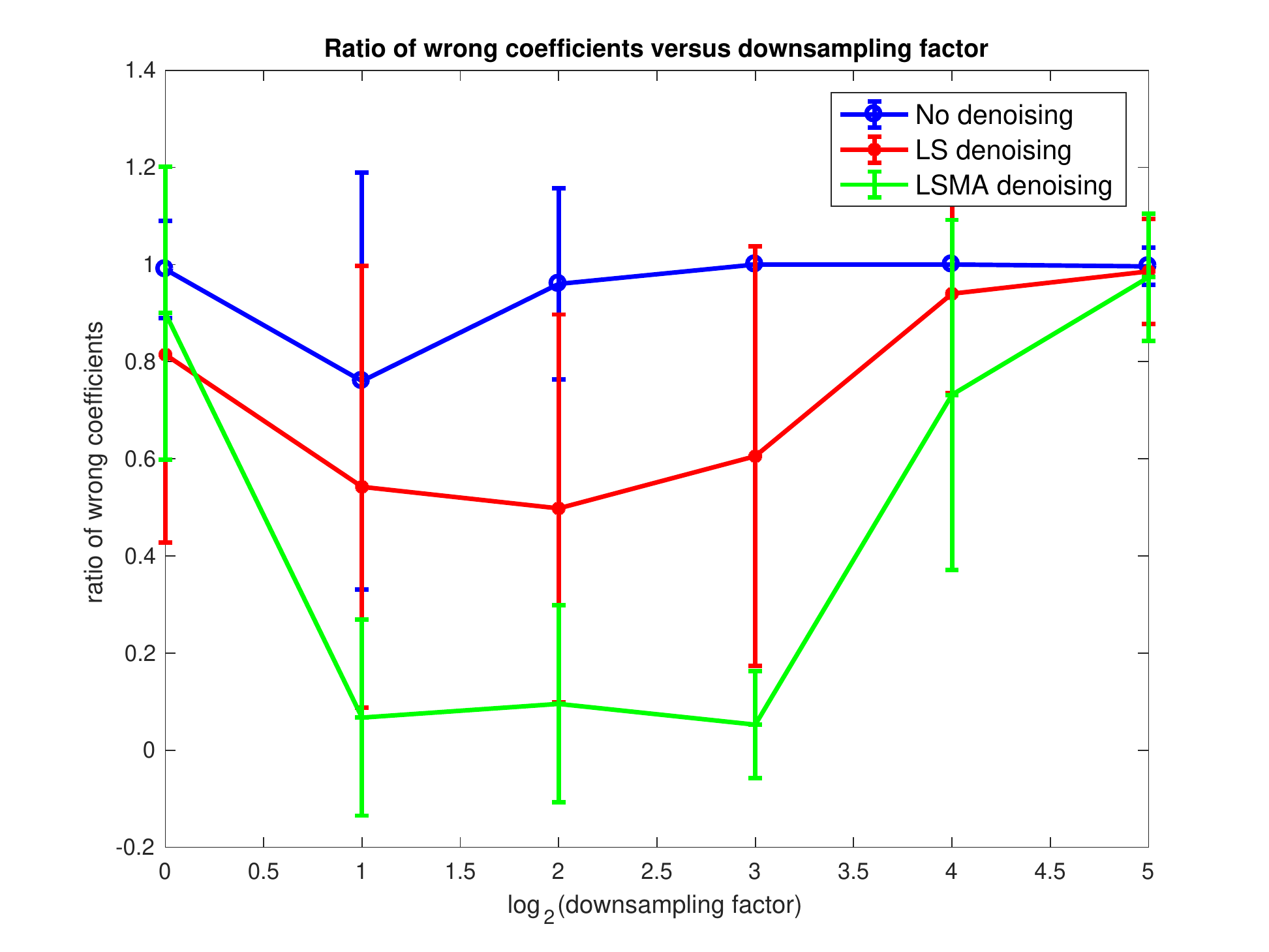} \\
\end{tabular}
\caption{Burger's equation in \eqref{E:burger} with various downsampling factors. (a) and (b) show the average ratio between the identified wrong coefficients and all identified coefficients in $100$ trails versus $\log_2(\text{downsampling factor})$ in the presence of $5\%$ (left) and $10\%$ (right) noise respectively.  Increasing the downsampling factors can positively affects the result until the downsampling factors become too large.}
\label{Fig-BurgerExactVersusDownsample}
\end{figure}

\section{Varying coefficients and Base Element Expansion}\label{sec:varying}

In this section, we consider PDEs with varying coefficients, e.g., $a_j(x)$ varying in space.  As illustrated in (\ref{E:L}), we can easily generalize the IDENT set-up to PDEs with varying coefficients, by expanding the coefficients in terms of finite element bases and solving group Lasso for $L>1$. 
Due to the increasing number of coefficients, the complexity of the problem increases as $L$ increases.  In order to design a stable  algorithm, we propose to let $L$ grow before TEE is applied. 

We refer to this extra procedure as Base Element Expansion (BEE).   From the given discrete data $ \{u_{i}^n | i=1, \dots, N_1 \text{ and } n= 1, \dots, N_2\}$, we first  compute numerical approximations of $u_t,u_x,u_{xx}$, etc, then apply BEE to gradually increase $L$ until the recovered coefficients become stable. 
For each fixed $L$, we form the feature matrix $\hF$ according to \eqref{E:general_F}, and solve group Lasso with the balancing parameter $\lambda$ to obtain $\widehat{\ba}_{\text{G-Lasso}}(\lambda)$.  We record the normalized block magnitudes from group Lasso, as $L$ increases: 
\[
\text{ BEE procedure}:= 
 \left\{\|\hF[j]\|_{L^1}\left\| \sum_{l=1}^L \frac{\widehat{\ba}_{\text{G-Lasso}}(\lambda)_{j,l}}{\|\hF[j,l]\|_\infty} \phi_l \right\|_{L^1}\right\}_{j=1,\ldots,N_3} \text{ versus  } L. 
\]
The main idea of BEE is based on the  convergence of the finite element approximation (\ref{E:approxError}) - as more basis functions are used, the more accurate the approximation is.  In the BEE procedure, the normalized block magnitudes reach a plateau as $L$ increases, i.e.,  candidate features can be selected by a thresholding according to \eqref{algthresholding} when $L$ is sufficiently large. With this added BEE procedure, IDENT continues to the Step 3 of TEE to refine the selection.

In the following, we present various numerical experiments for PDEs with varying coefficients using IDENT with BEE.   For the first set of experiments, in Figure  \ref{Fig-7VaryNoise0}, \ref{Fig-7VaryNoise0p2NoDenoising} and \ref{Fig-7VaryNoise0p2MALS},  we assume only one coefficient is known a priori to vary in $x$.  For the second set of experiments, in Figure \ref{Fig-47VaryDownsample}, we assume two coefficients are known a priori to vary in $x$, and the final experiment, in Figure \ref{Fig-AllVaryDownsample4} assumes all coefficients are free to vary without any a priori information.

\begin{figure}
\centering
\begin{tabular}{ccc}
(a) Given data & 
(b) BEE & 
(c) Group Lasso, when $L=20$ \\
\includegraphics[width = 2.05in]{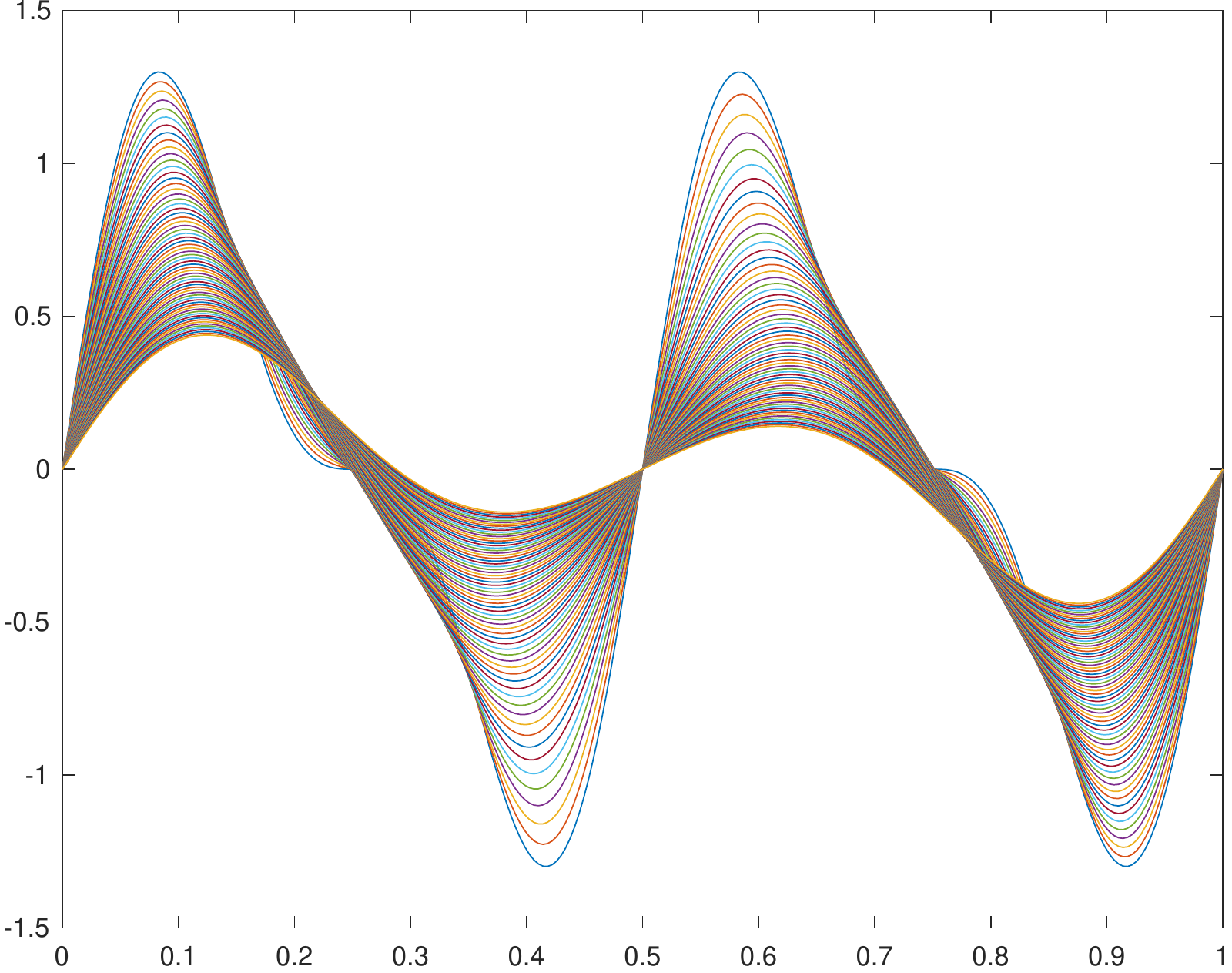} &
\includegraphics[width = 2.05in]{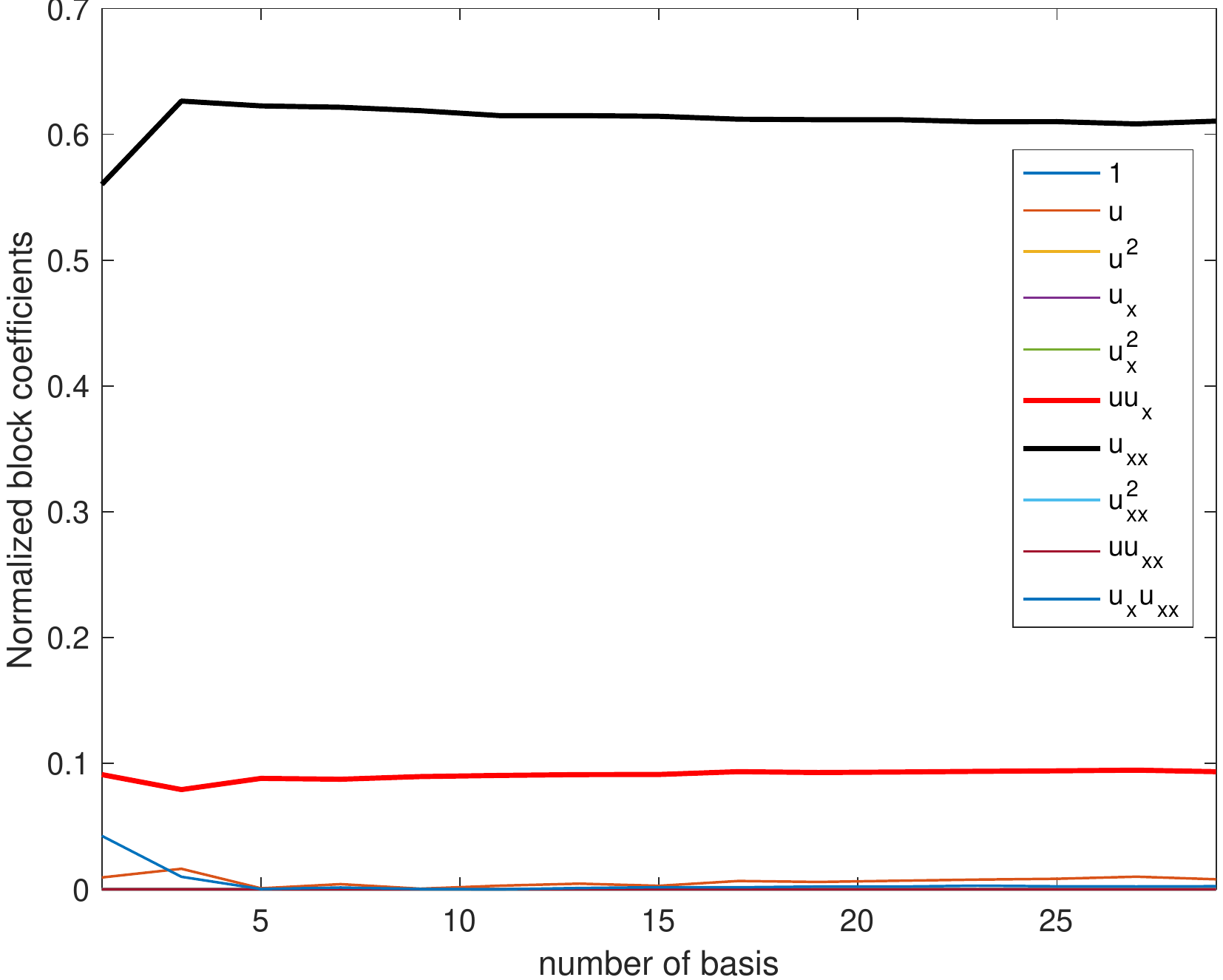} &
\includegraphics[width = 2.05in]{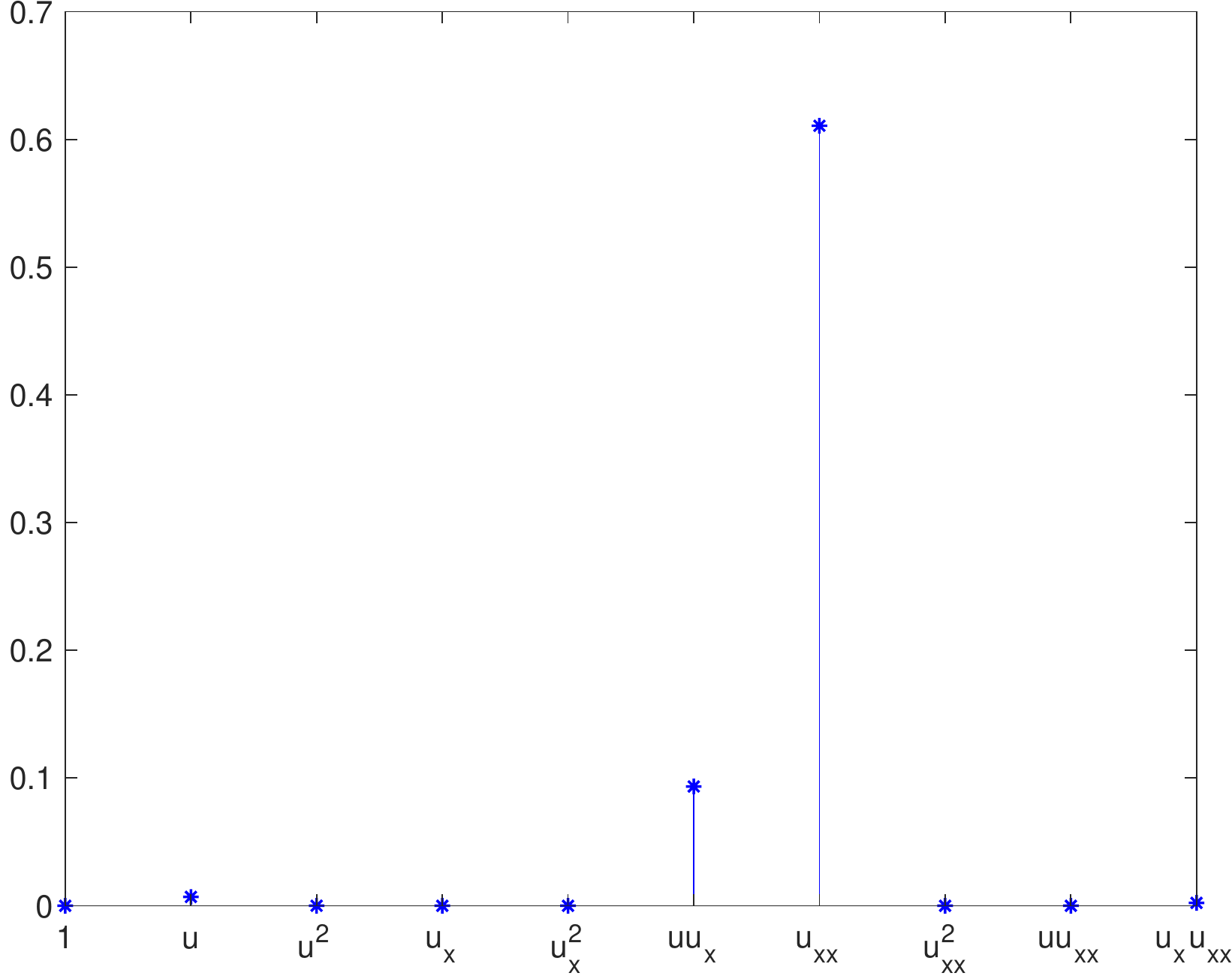}\\
(d) TEE in $\log_{10}$ scale vs. $L$& 
(e) $\widehat{c}(x)$ vs. $c(x)$  & 
(f)  $\|c(x)-\widehat{c}(x)\|_{L^1}$ vs. $L$ \\
\includegraphics[width = 2.05in]{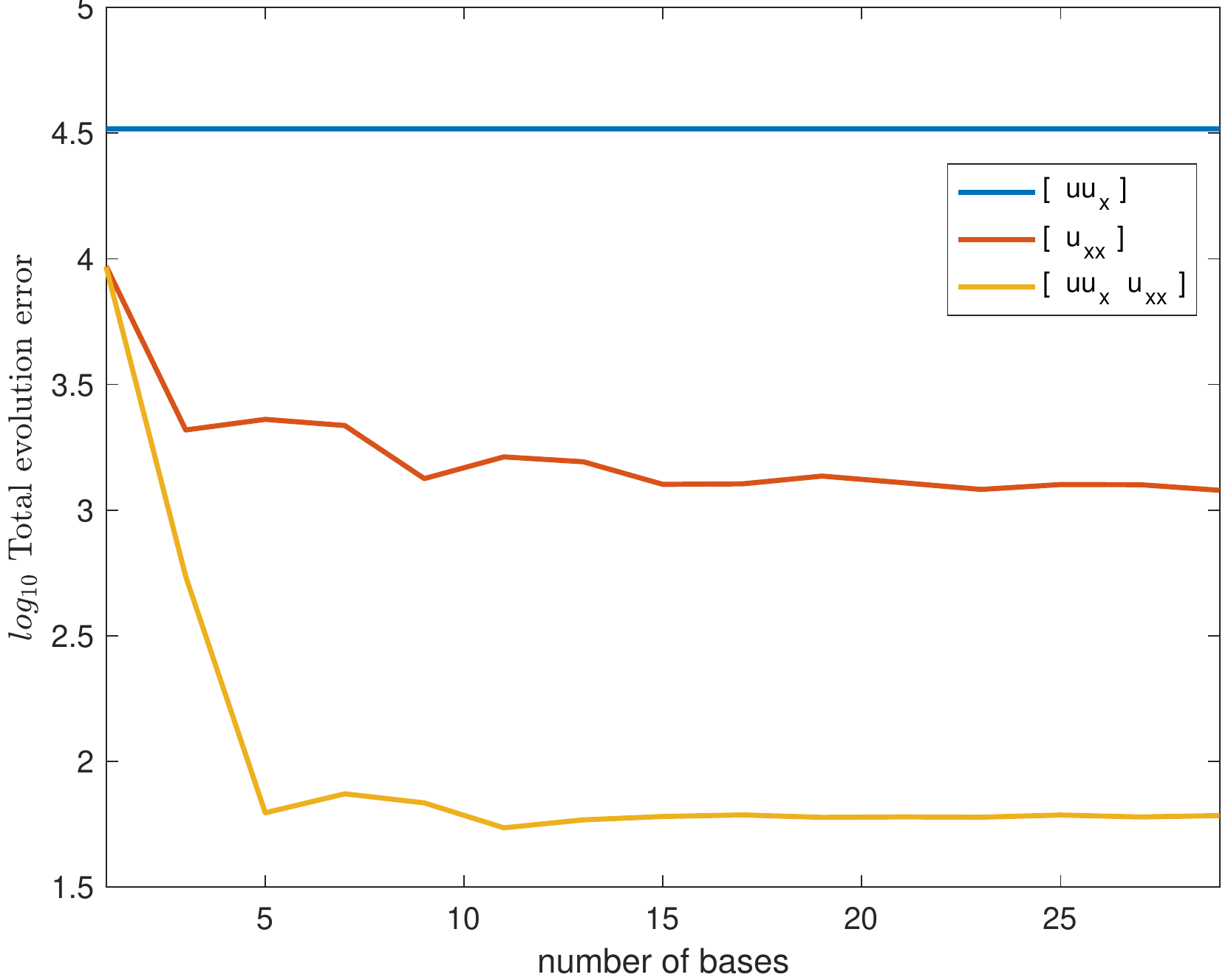} &
\includegraphics[width = 2.05in]{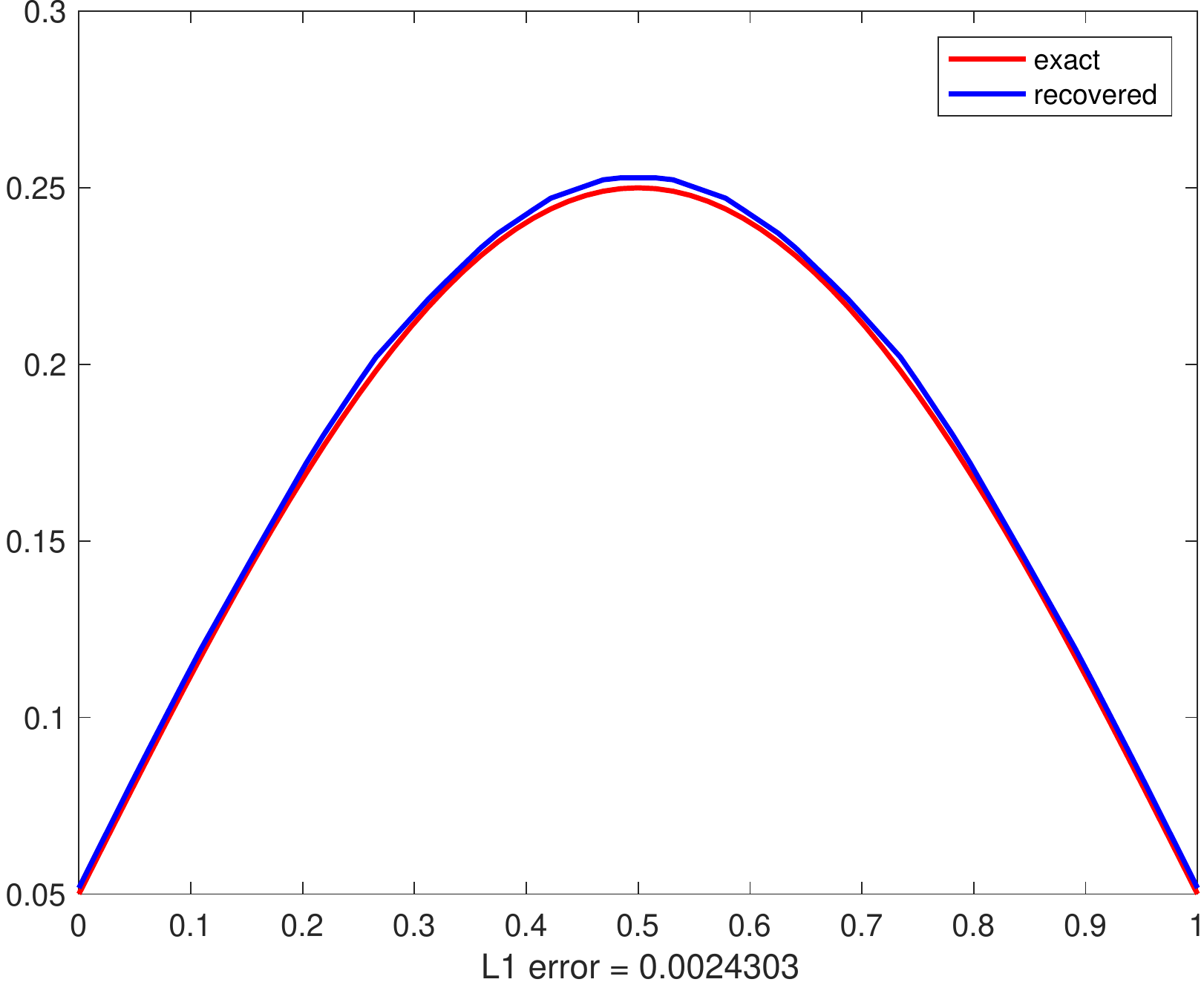} &
\includegraphics[width = 2.05in]{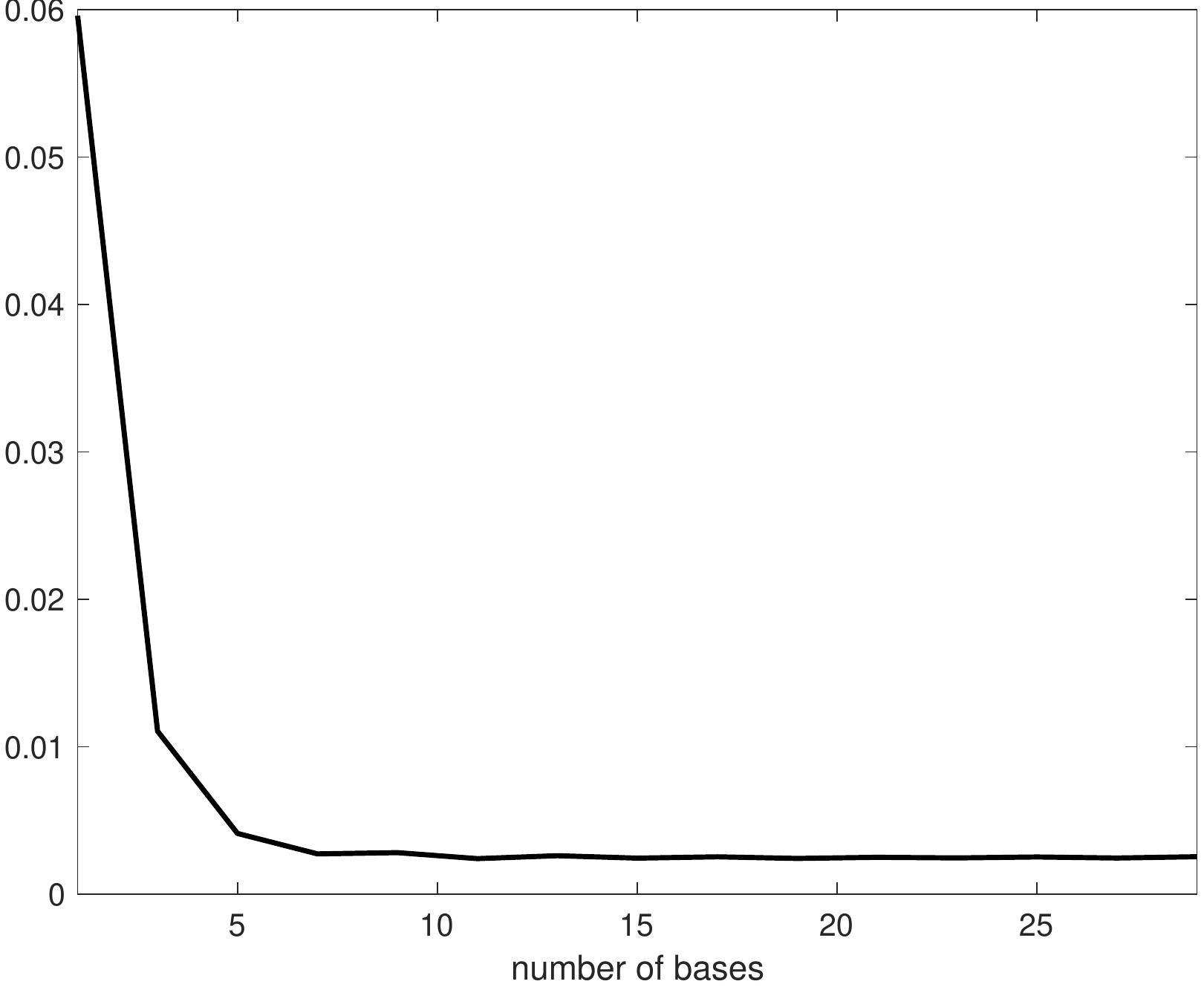}\\
\end{tabular}
\caption{Burger's equation with a varying diffusion coefficient \eqref{E:BurgerDiffVary} where data are downsampled by a factor 4.  (a) The given data. (b) BEE as $L$ increases from $1$ to $30$.  (c)  An example of the magnitudes of coefficients from Group Lasso when $L=20$.  (d) TEE versus $L$, for all subsets of coefficients of $\{uu_x \ u_{xx} \}$.  (e) Recovered diffusion coefficient $\widehat{c}(x) = \sum_{l=1}^L \widehat{a}_{7,l}\phi_l(x)$ when $L=20$ (blue), compared with the true diffusion coefficient $c(x)$ (red).   (f) The error $\|c(x)-\widehat{c}(x)\|_{L^1}$ as $L$ increases from $1$ to $30$.}
\label{Fig-7VaryNoise0}
\end{figure}

The first experiment is on the Burger's equation with a varying diffusion coefficient:
\begin{align}
& u_t + \left( \frac{u^2}{2}\right)_x =c(x)u_{xx}, \text{ where } c(x) = 0.05+0.2 \sin \pi x
\label{E:BurgerDiffVary}
\\
&\ x \in [0,1], \ u(x,0)= \sin(4\pi x)+0.5\sin(8 \pi x) \text{ and } u(0,t) = u(1,t) = 0. \nonumber
\end{align}
The given data, shown in Figure \ref{Fig-7VaryNoise0} (a), is numerically simulated by a first-order method with spacing $\delta x = 1/256$ and $\delta t = (\delta x)^2/2$ for $t \in [0,0.05]$. Data are downsampled by a factor of $4$ in time and space.  There is no measurement noise. Our objective is to identify the correct features $u u_x$ and $u_{xx}$ and recover the coefficients $-1$ and the varying coefficient $c(x)$.   After we expand the diffusion coefficient with $L$ finite element bases, the vectors to be identified can be written as $\ba = [a_1  \ldots a_6 \ a_{7,1} \ \ldots \ a_{7,L} \ a_8 \ldots a_{10}]^T$ where $c(x) \approx \sum_{\ell=1}^L a_{7,l}\phi_l(x)$.

\begin{figure}[t!]
\centering
\begin{tabular}{ccc}
(a) BEE & 
(b) TEE vs. $L$ &
(c) $\widehat{c}(x)$, when $L=20$ \\ 
\includegraphics[width = 2.05in]{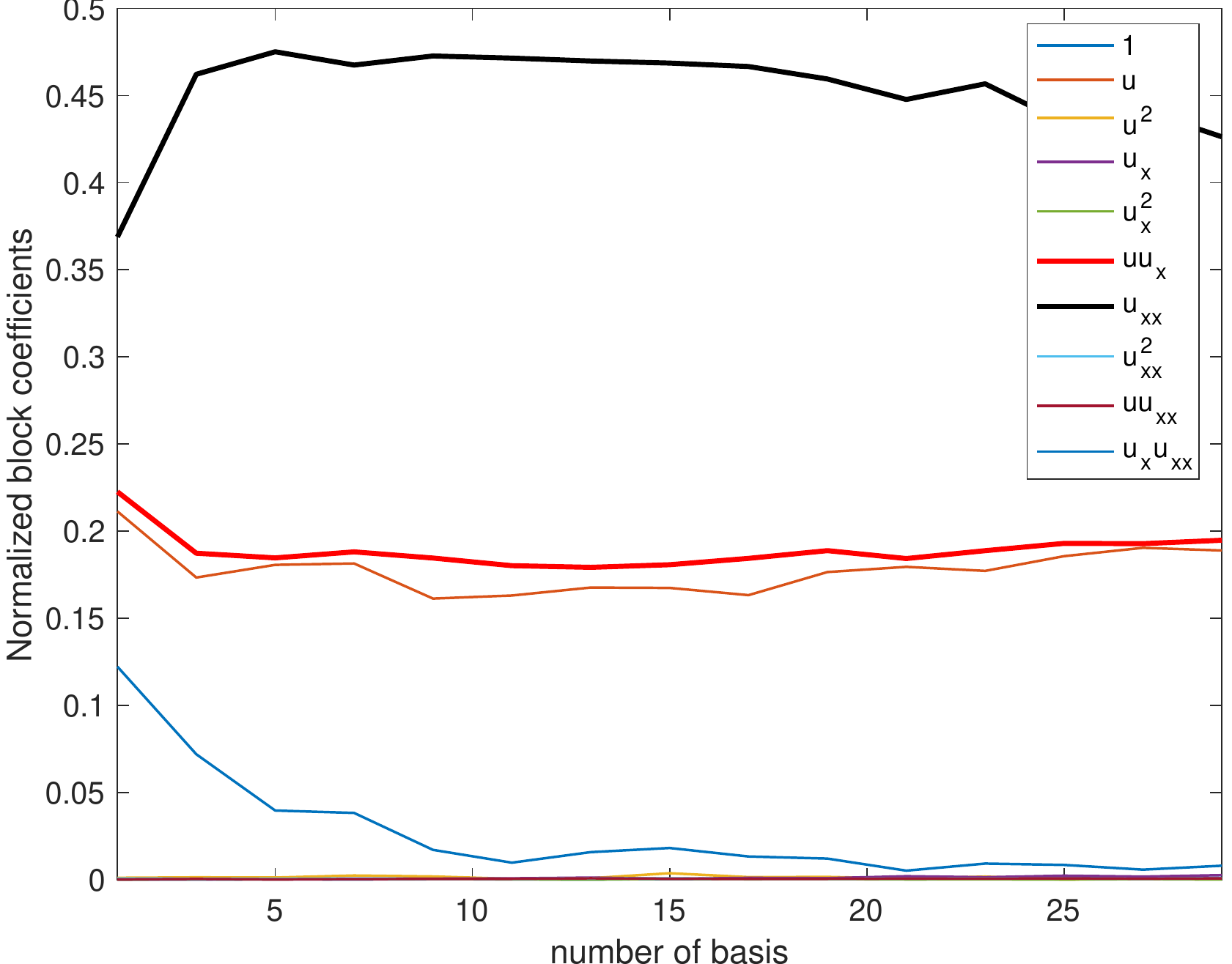} &
\includegraphics[width = 2.05in]{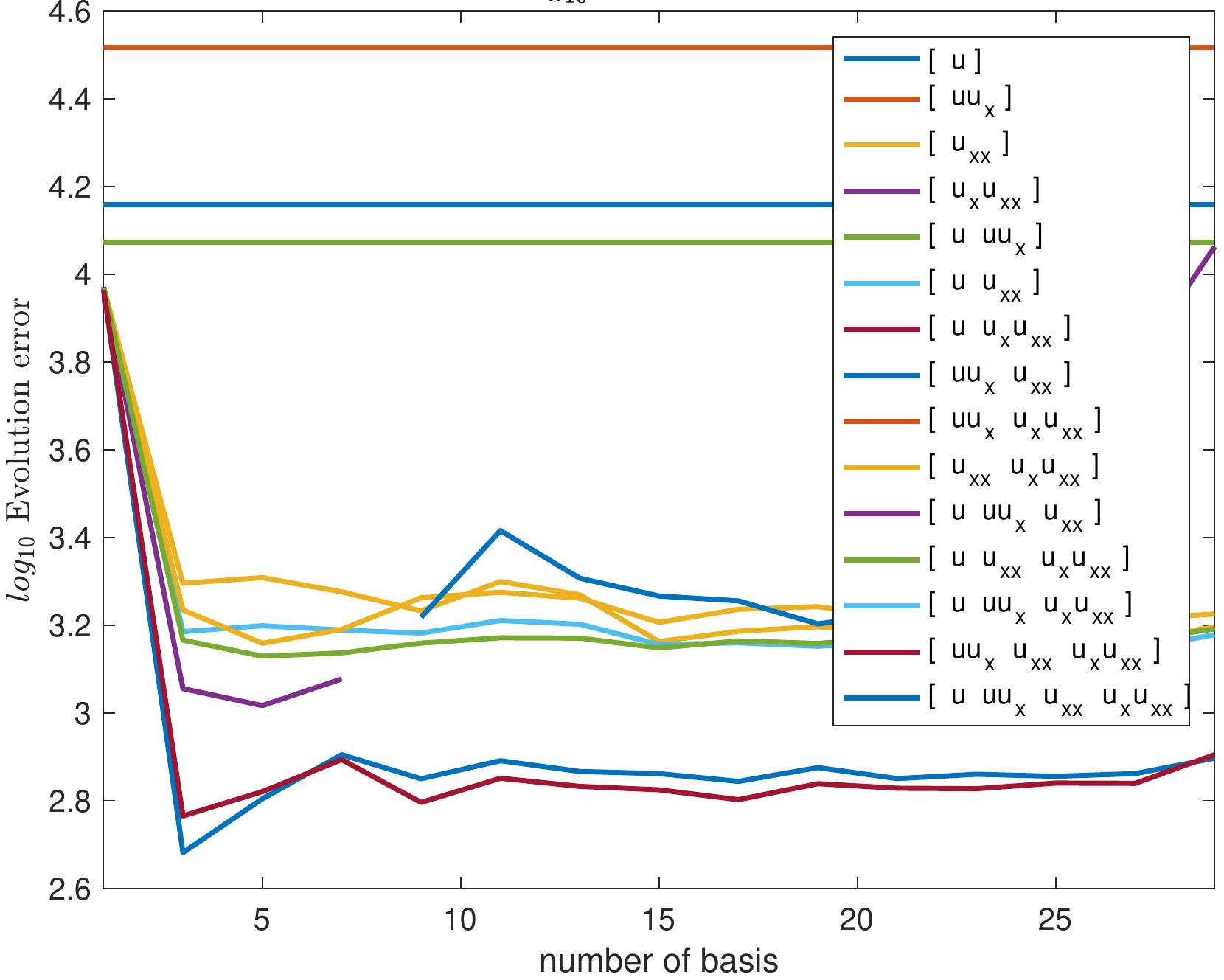} &
\includegraphics[width = 2.05in]{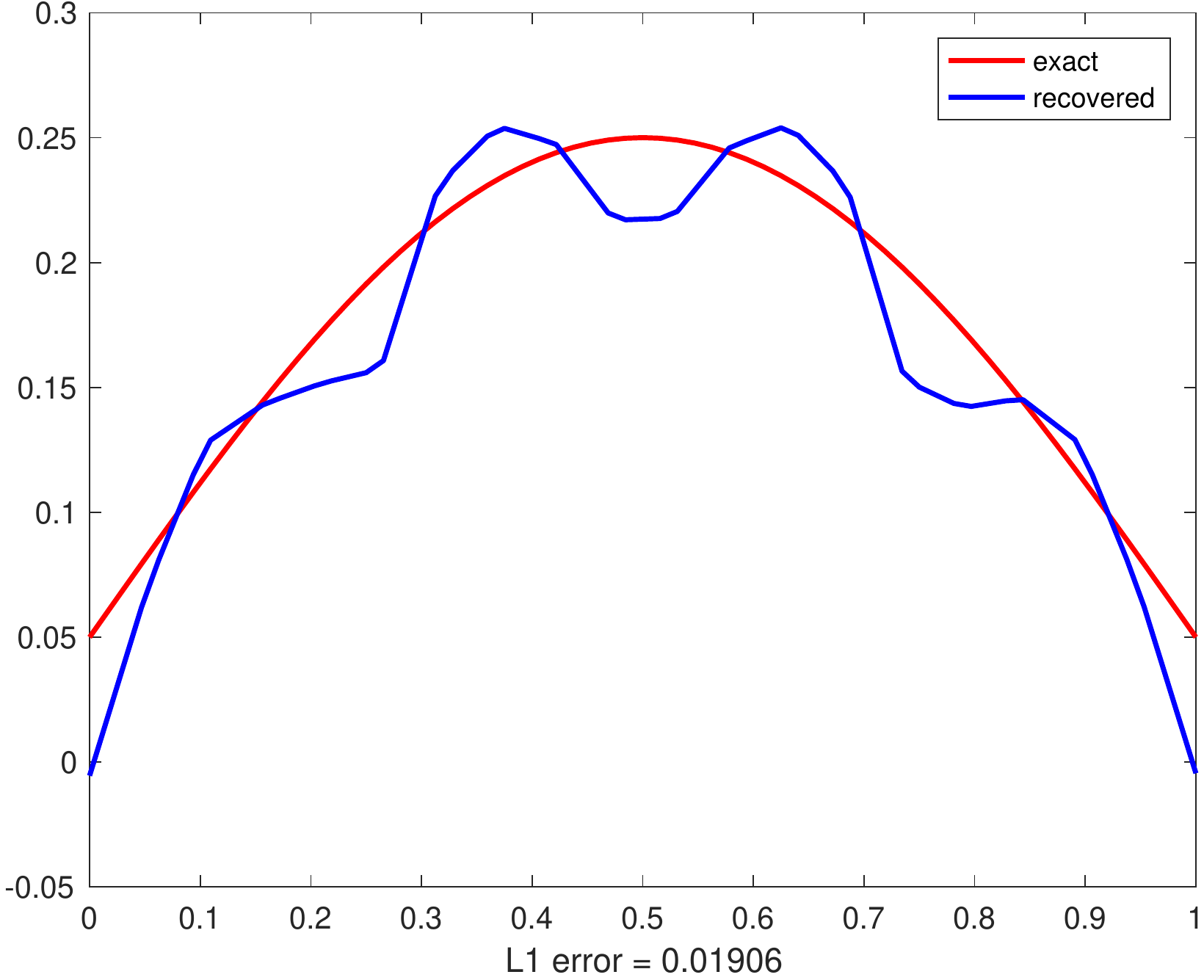} \\
\end{tabular}
\caption{Equation\eqref{E:BurgerDiffVary}, where data are downsampled by a factor of $4$ and $0.2\%$ measurement noise is added.  No denoising is applied. (a) BEE as $L$ increases from $1$ to $30$. (b) TEE versus $L$, for all subsets of four terms selected  in (a).  The correct support $[uu_x \ u_{xx}]$ is identified with the lowest TEE when $L \ge 7$.  (c) Recovered diffusion coefficient $\widehat{c}(x)$ when $L=20$ (blue), compared with the true diffusion coefficient $c(x)$ (red). }
\label{Fig-7VaryNoise0p2NoDenoising}
\end{figure}

\begin{figure}[h!]
\centering
\begin{tabular}{ccc}
(a) BEE & 
(b) TEE vs. $L$ &
(c) $\widehat{c}(x)$, when $L=20$ \\ 
\includegraphics[width = 2.05in]{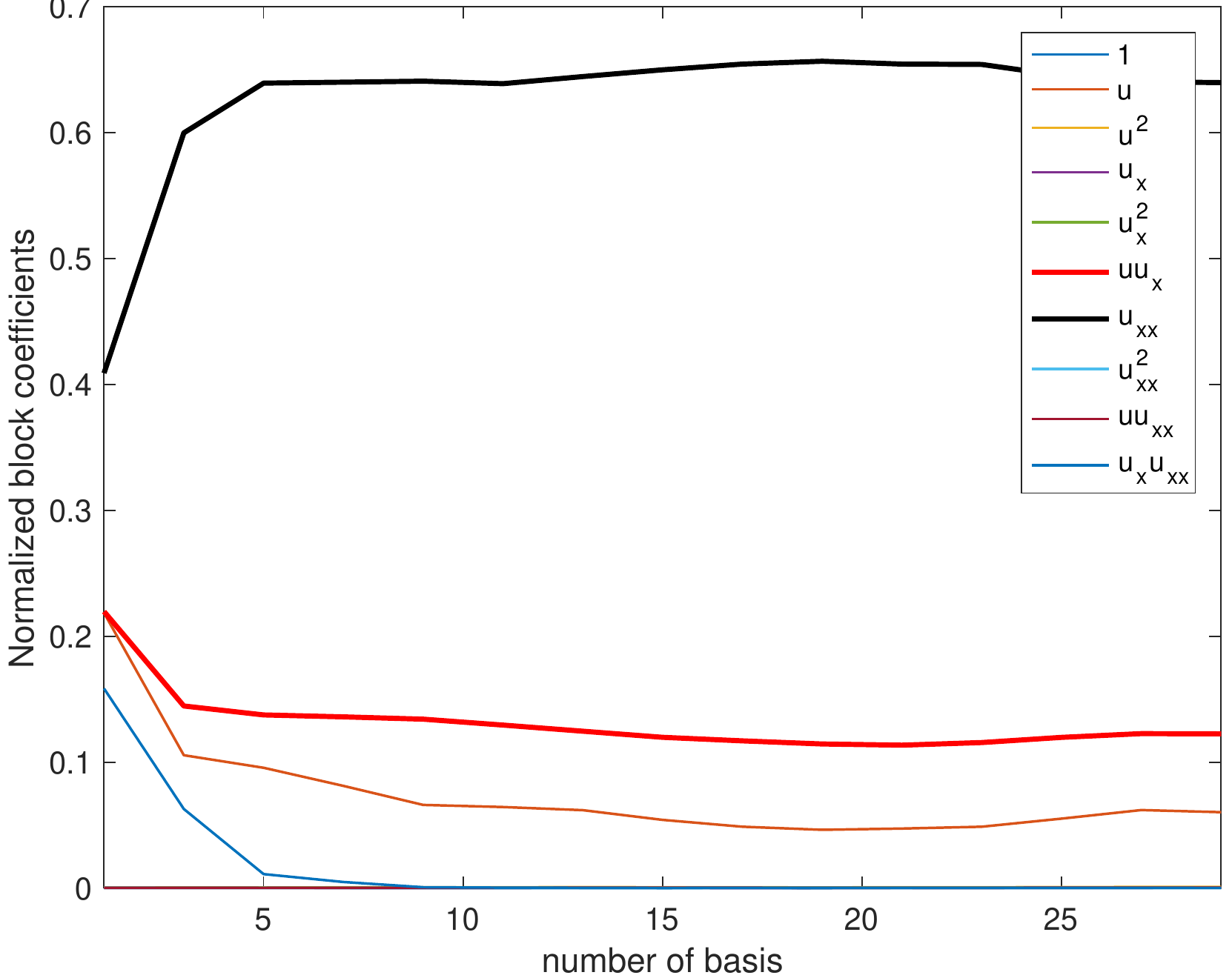} &
\includegraphics[width = 2.05in]{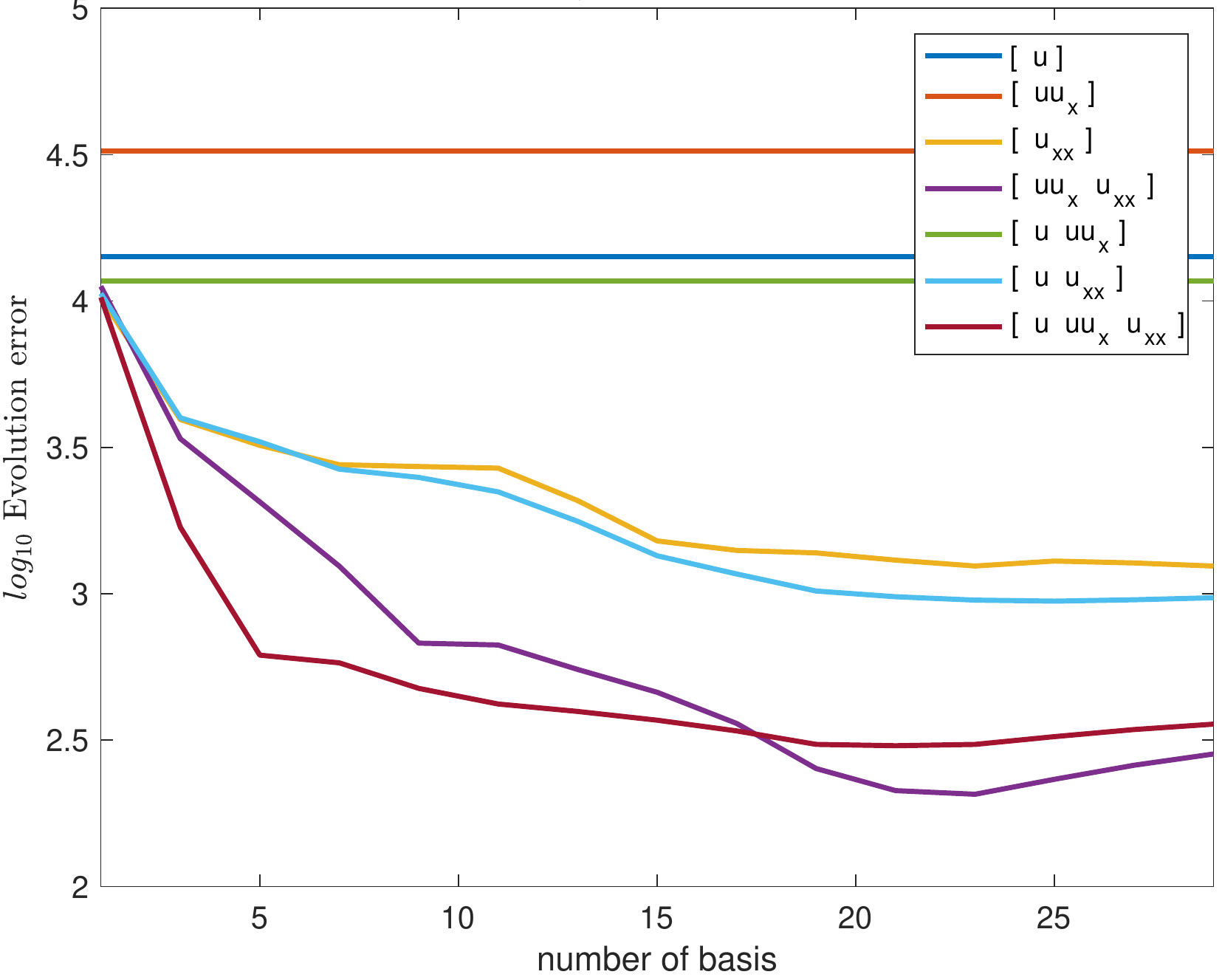} &
\includegraphics[width = 2.05in]{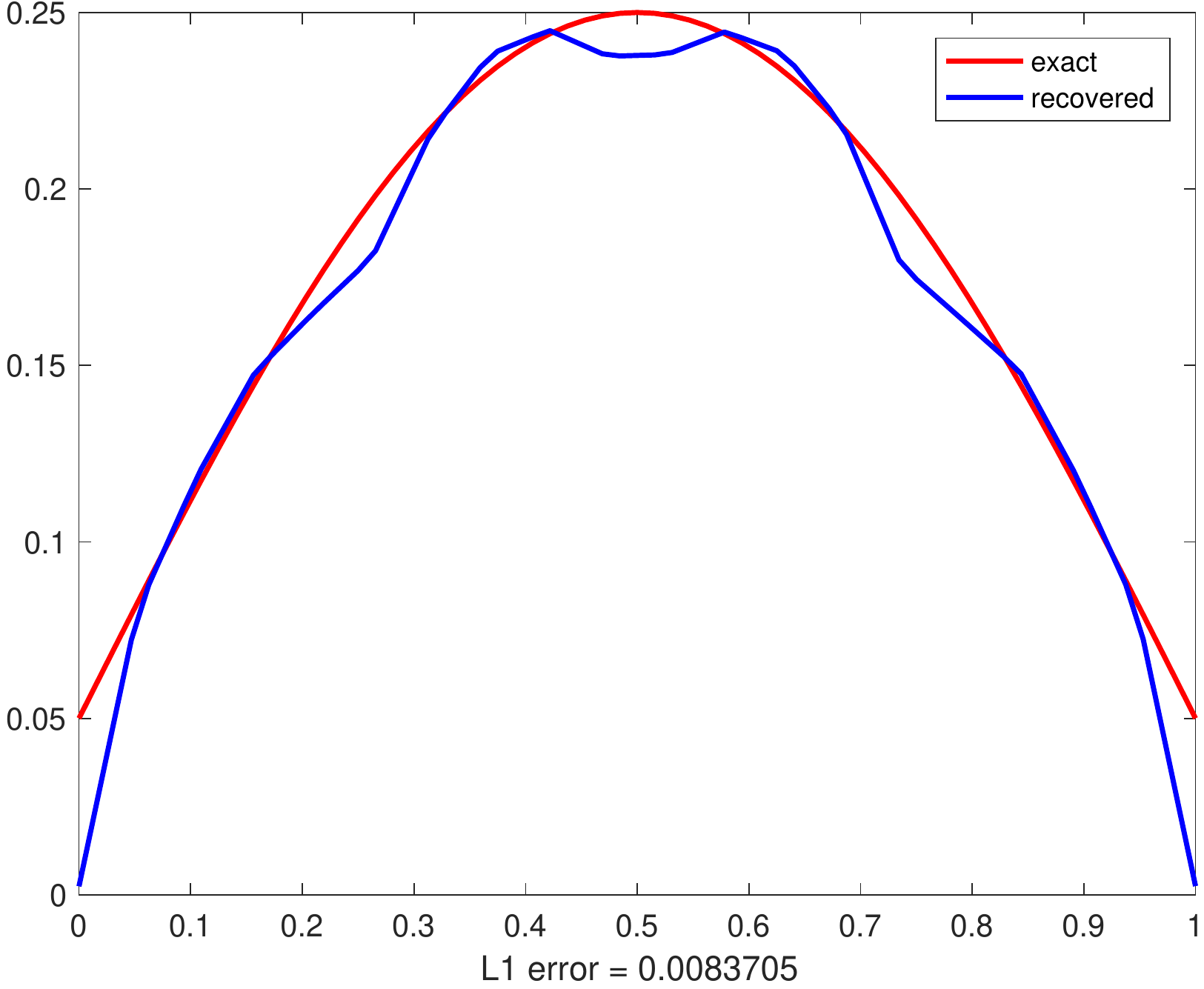} \\
\end{tabular}
\caption{The same experiment as Figure  \ref{Fig-7VaryNoise0p2NoDenoising}, but IDENT+BEE is applied with LSMA denoising.  (a) BEE as $L$ increases from $1$ to $30$. (b) TEE versus $L$, for all subset of coefficients identified in (a).  The correct support $[uu_x \ u_{xx}]$ is identified since it gives rise to the lowest TEE when $L \ge 19$.  (c) The recovered diffusion coefficient when $L=20$, compared with the true diffusion coefficient (red), which shows a clear improvement compared to Figure \ref{Fig-7VaryNoise0p2NoDenoising} (c).  }
\label{Fig-7VaryNoise0p2MALS}
\end{figure}

Figure \ref{Fig-7VaryNoise0} (b) presents BEE as $L$ increases from  $1$ to $30$.  This graph clearly shows  that BEE stabilizes when $L \ge 5$.   (c) is an example of the group Lasso result, the normalized block magnitudes, when $L = 20$.  The magnitudes of $uu_x$ and $u_{xx}$ are significantly larger than the others, so they are picked for TEE in Step 3.   
Figure \ref{Fig-7VaryNoise0} (d) presents TEE for all different $L$: TEE of $uu_x$, $u_{xx}$ and $[uu_x \ u_{xx}]$ in $\log_{10}$ scale, respectively.  The correct set, $[uu_x \ u_{xx}]$, is identified as the recovered feature set with the smallest TEE.  
The coefficient $[\widehat{a}_6 \ \widehat{a}_{7,1} \ldots \widehat{a}_{7,L}]$ is computed by least squares, and (e) displays the recovered  diffusion coefficient $\widehat{c}(x) = \sum_{l=1}^L \widehat{a}_{7,l}\phi_l(x)$ when $L=20$, compared with  the true equation $c(x) = 0.05+0.2 \sin \pi x$ given in (\ref{E:BurgerDiffVary}).
Figure \ref{Fig-7VaryNoise0} (f) shows the error $\|c(x)-\widehat{c}(x)\|_{L^1}$ as $L$ increases from $1$ to $30$. The error decreases as $L$ increases, yet does not converge to $0$ due to the errors arising from data generations and finite-difference approximations of $u_t,u_x$ and $u_{xx}$.

For the same equation,  $0.2\%$ noise is added to the next experiments presented in Figure \ref{Fig-7VaryNoise0p2NoDenoising} and \ref{Fig-7VaryNoise0p2MALS}.
Figure \ref{Fig-7VaryNoise0p2NoDenoising} presents the result without any denoising.  (a) shows BEE as $L$ increases from $1$ to $30$, where the magnitudes of  $u,uu_x,u_{xx},u_xu_{xx}$ are not negligible for $L \ge 20$.  These terms are picked for TEE, and Figure \ref{Fig-7VaryNoise0p2NoDenoising} (b) shows TEE versus $L$. The correct support $[uu_x \ u_{xx}]$ is identified with the lowest TEE when $L \ge 7$.    The computed diffusion coefficient $\widehat{c}(x)$ is compared  to the true one  in (c), which has the error $\|c(x)-\widehat{c}(x)\|_{L^1} \approx 0.019$.  Even for the data with noise, IDENT+BEE without any denoising gives a good identification of the general form of the PDE.  However, the varying coefficient approximation can be improved if LSMA denoising applied to the data as discussed in Section \ref{sec:noise}.

Figure \ref{Fig-7VaryNoise0p2MALS} presents the same experiment with  LSMA denoising.  In (a), BEE  picks $u,uu_x,u_{xx}$ for TEE. Notice that the coefficient of $u_x u_{xx}$ almost vanishes after denoising compared to Figure \ref{Fig-7VaryNoise0p2NoDenoising}.  (b) shows TEE versus $L$, where the correct support $[uu_x \ u_{xx}]$ gives the lowest TEE, when $L \ge 19$. The recovered diffusion coefficient when $L=20$ is shown in (c), which yields the error $\|c(x)-\widehat{c}(x)\|_{L^1} \approx 0.008$. 
In comparison with the results in Figure \ref{Fig-7VaryNoise0p2NoDenoising} without denoising, LSMA reduces the error of the recovered diffusion coefficient from $0.019$ to $0.008$. 

\begin{figure}[h]
\centering
\begin{tabular}{ccc}
(a) Numerical solution &
(b) $\widehat{b}(x)$, $\widehat{c}(x)$ vs. $b(x)$, $c(x)$  &
(b) $\widehat{b}(x)$, $\widehat{c}(x)$ vs. $b(x)$, $c(x)$  \\ 
\includegraphics[width = 2.05in]{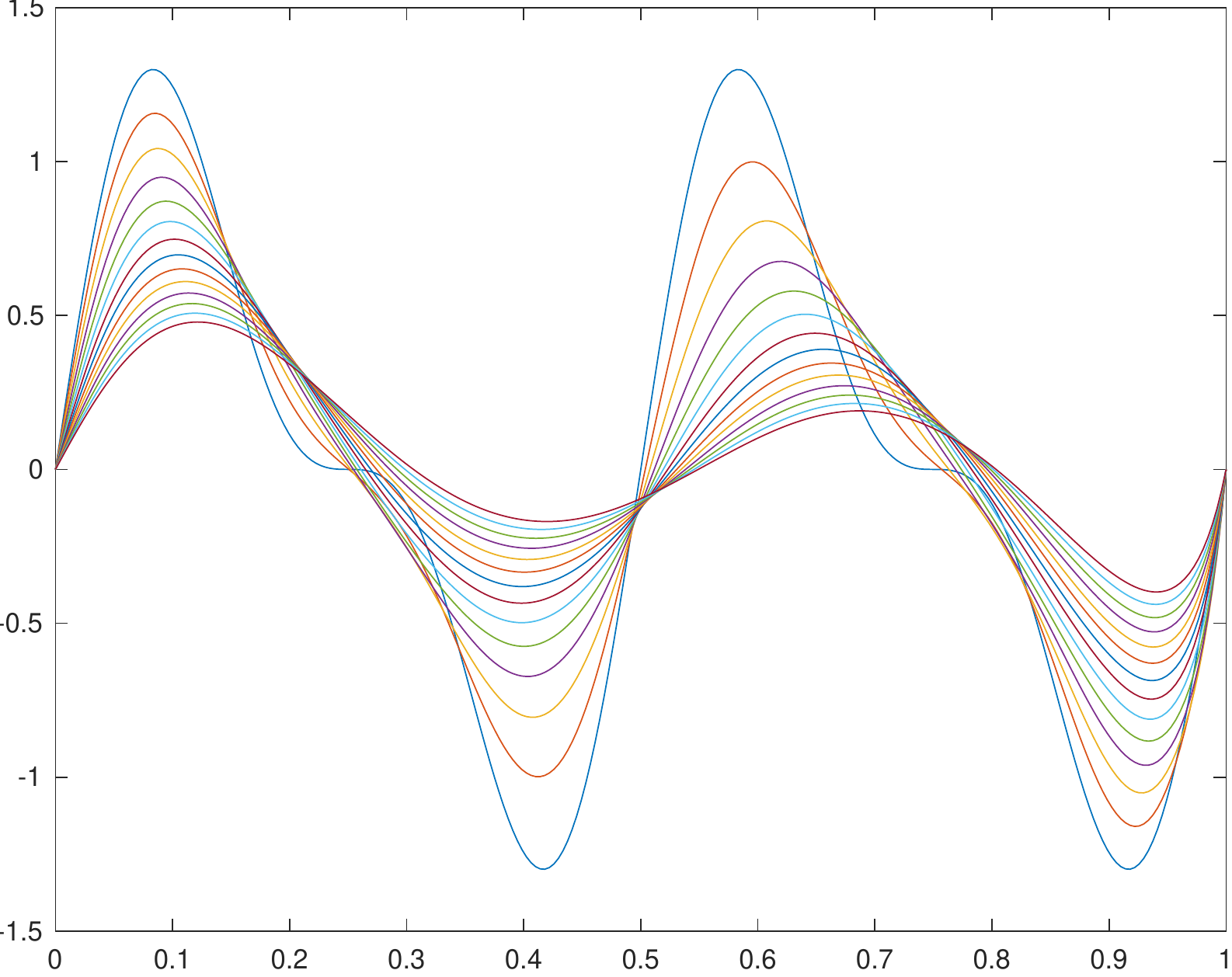} &
\includegraphics[width = 2.05in]{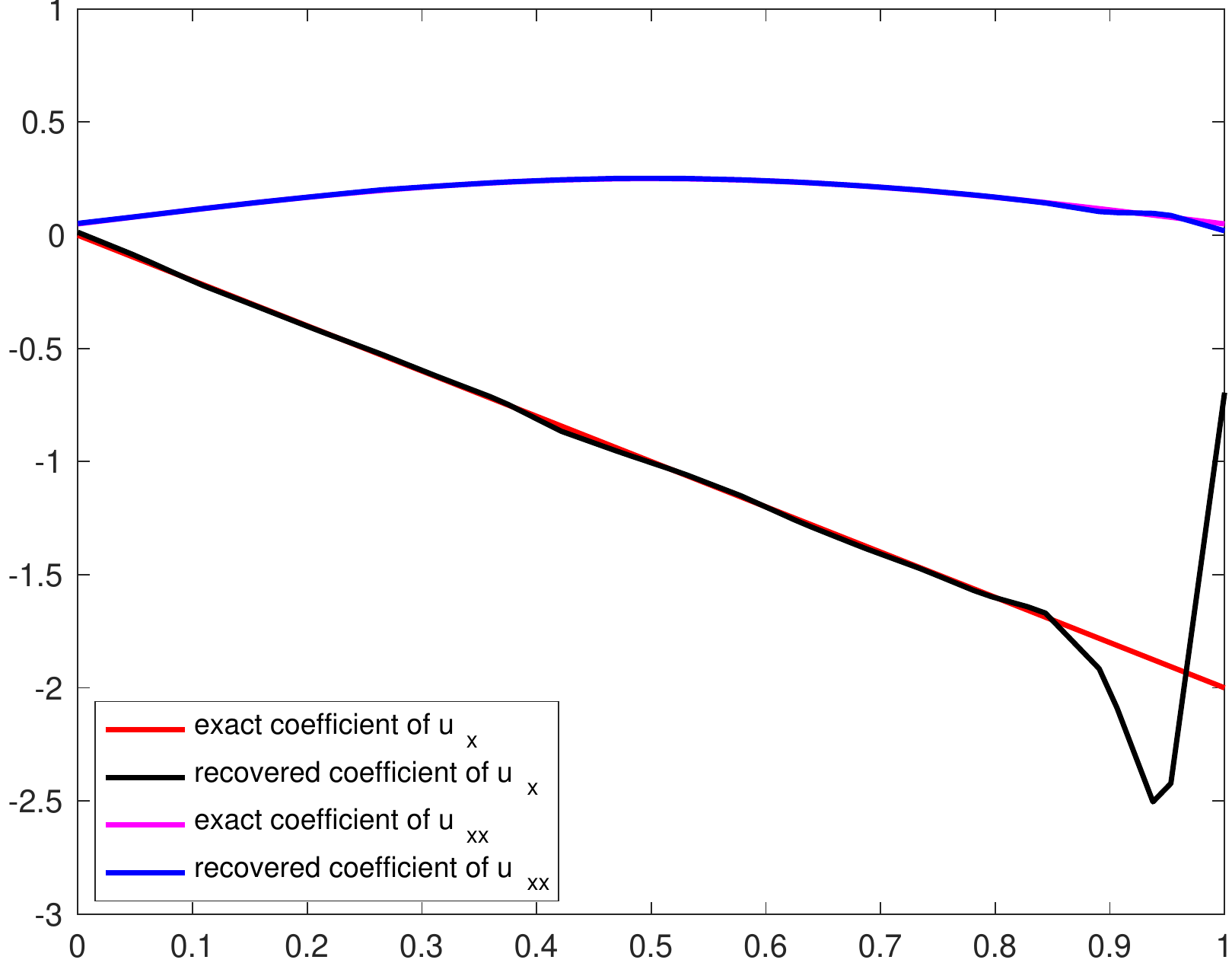} & 
\includegraphics[width = 2.05in]{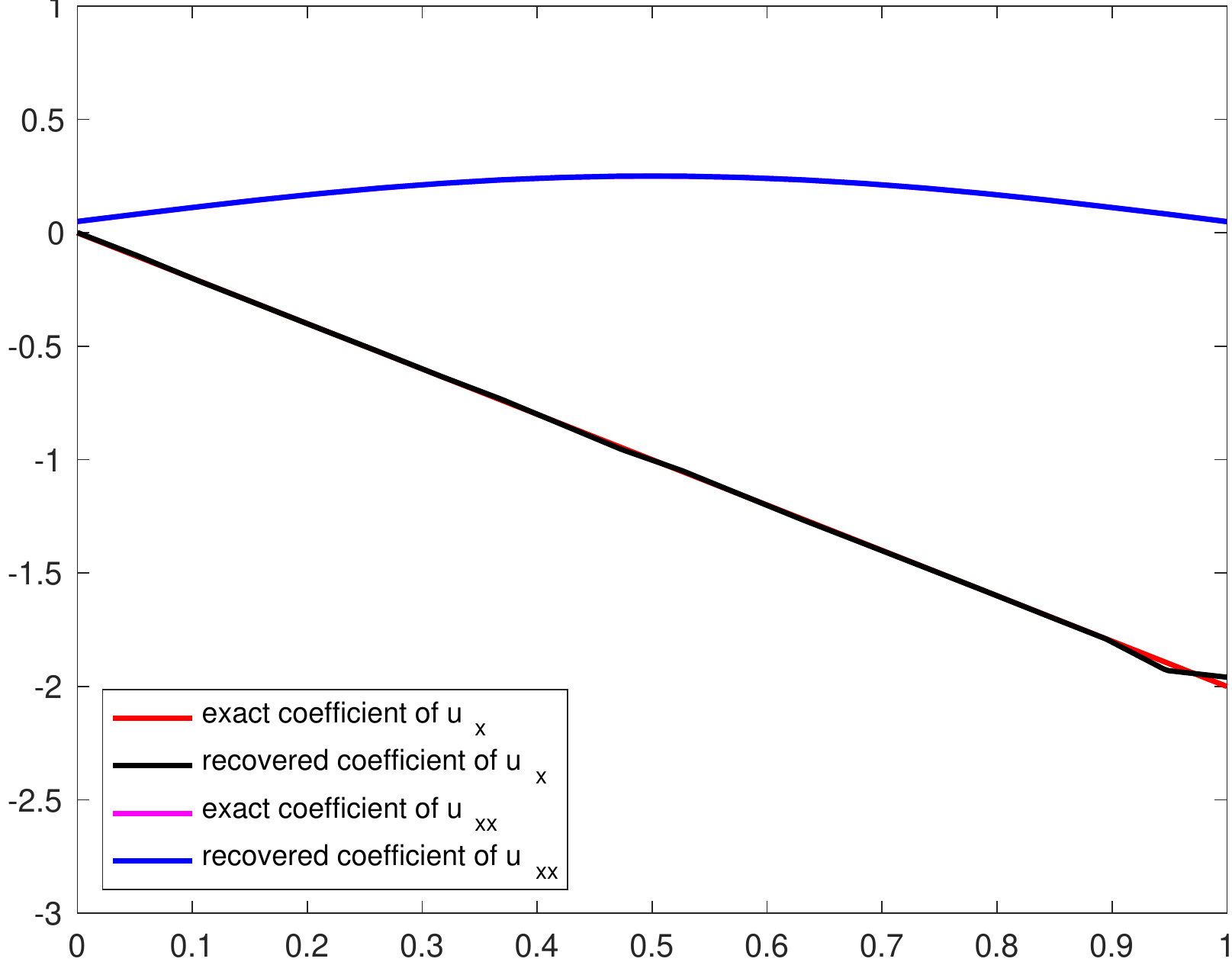} \\
\end{tabular}
\caption{Equation \eqref{E:47Vary} with varying convection and diffusion coefficients. (a)  The numerical solution of \eqref{E:47Vary}.  
(b) With data downsampled by a factor of $4$ in time and space, the recovered coefficient  $\widehat{b}(x)$  of $u_x$ is not accurate near  $x=1$. The downsampling rate is too high near  $x=1$ so that details of the solution are lost.  (c) The same experiment without any downsampling, and the recovered coefficients $\widehat{b}(x)$ and $\widehat{c}(x)$ are more accurate than (b). }
\label{Fig-47VaryDownsample}
\end{figure}

In Figure \ref{Fig-47VaryDownsample}, we experiment on the following PDF with two varying coefficients:
\begin{align}
& u_t  =b(x)u_x + c(x)u_{xx}, \text{ where } b(x)=-2x \text{ and } c(x) = 0.05+0.2 \sin \pi x, \label{E:47Vary}
\\ 
& x \in [0,1], \ u(x,0)= \sin(4\pi x)+0.5\sin(8 \pi x) \text{ and } u(0,t) = u(1,t) = 0. \nonumber
\end{align}
The given data are simulated by a first-order method with spacing $\delta x = 1/256$ and $\delta t = (\delta x)^2/2$ for $t \in [0,0.05]$.  The vectors to be identified are  $\ba = [a_1  \ldots a_3 \ a_{4,1} \ \ldots \ a_{4,L} \ a_5 \ a_6 \ a_{7,1} \ \ldots \ a_{7,L} \ a_8 \ldots a_{10}]^T$ where $b(x) \approx \sum_{\ell=1}^L a_{4,l}\phi_l(x)$ and $c(x) \approx \sum_{\ell=1}^L a_{7,l}\phi_l(x)$.  Figure \ref{Fig-47VaryDownsample} (a) shows the numerical solution of \eqref{E:47Vary}. In (b) the given data are downsampled by a factor of $4$ in time and space, and in (c) data not downsampled.   BEE and TEE successfully identifies the correct features.  Figure \ref{Fig-47VaryDownsample} (b)
plots both the recovered coefficients $\widehat{b}(x)$, $\widehat{c}(x)$ and the true coefficients $b(x)$ and $c(x)$ when data are downsampled.  Notice that the coefficient $\widehat{b}(x)$ of $u_x$ is not accurate when $x$ is close to $1$.  
The result is improved in (c) where data are not downsampled.
No downsampling helps to keep details of the solution around $x=1$ and reduces the finite-difference approximation errors.

Our final experiment is on Equation \eqref{E:BurgerDiffVary},  but all coefficients are allowed to vary in $x$.  The numerical solution is simulated in the same way as Figure \ref{Fig-7VaryNoise0} and the given data are downsampled by a factor of $4$ in time and space.  After all  coefficients are expanded in terms of $L$ finite element bases, the vectors to be identified is $\ba = \{a_{k,l}\}_{k=1,\ldots,10,\ l=1,\ldots,L}$ where $-1 = b(x)  \approx \sum_{l=1}^L a_{4,l}\phi(x)$ and $c(x) \approx \sum_{\ell=1}^L a_{7,l}\phi_l(x)$.  
Figure \ref{Fig-AllVaryDownsample4} (a) shows BEE, (b) shows group Lasso result, and (c) shows TEE. 
TEE identifies the correct support $[u_x \ u_{xx}]$ since it yields the smallest error. The coefficients $[\widehat{a}_{6,1} \ \ldots \ \widehat{a}_{6,L} \ \widehat{a}_{7,1} \ldots \widehat{a}_{7,L}]$ is computed by least squares. Figure  \ref{Fig-AllVaryDownsample4} (d) displays the computed coefficients $\widehat{b}(x) = \sum_{l=1}^L \widehat{a}_{6,l}\phi_l(x)$ and  $\widehat{c}(x) = \sum_{l=1}^L \widehat{a}_{7,l}\phi_l(x)$ when $L=20$, and (e) shows the coefficient recovery errors $\|-1-\widehat{b}(x)\|_{L^1}$ and $\|c(x)-\widehat{c}(x)\|_{L^1}$ as $L$ increases from $1$ to $30$. 
IDENT with BEE successfully identifies the correct terms even when all coefficients are free to vary in space.  The accuracy of the recovered coefficients has room for improvement if data are simulated and sampled on a finer grid.  
\begin{figure}[h]
\centering
\begin{tabular}{cc}
(a) BEE & 
(b) Group Lasso when $L=20$ \\
\includegraphics[width = 2.05in]{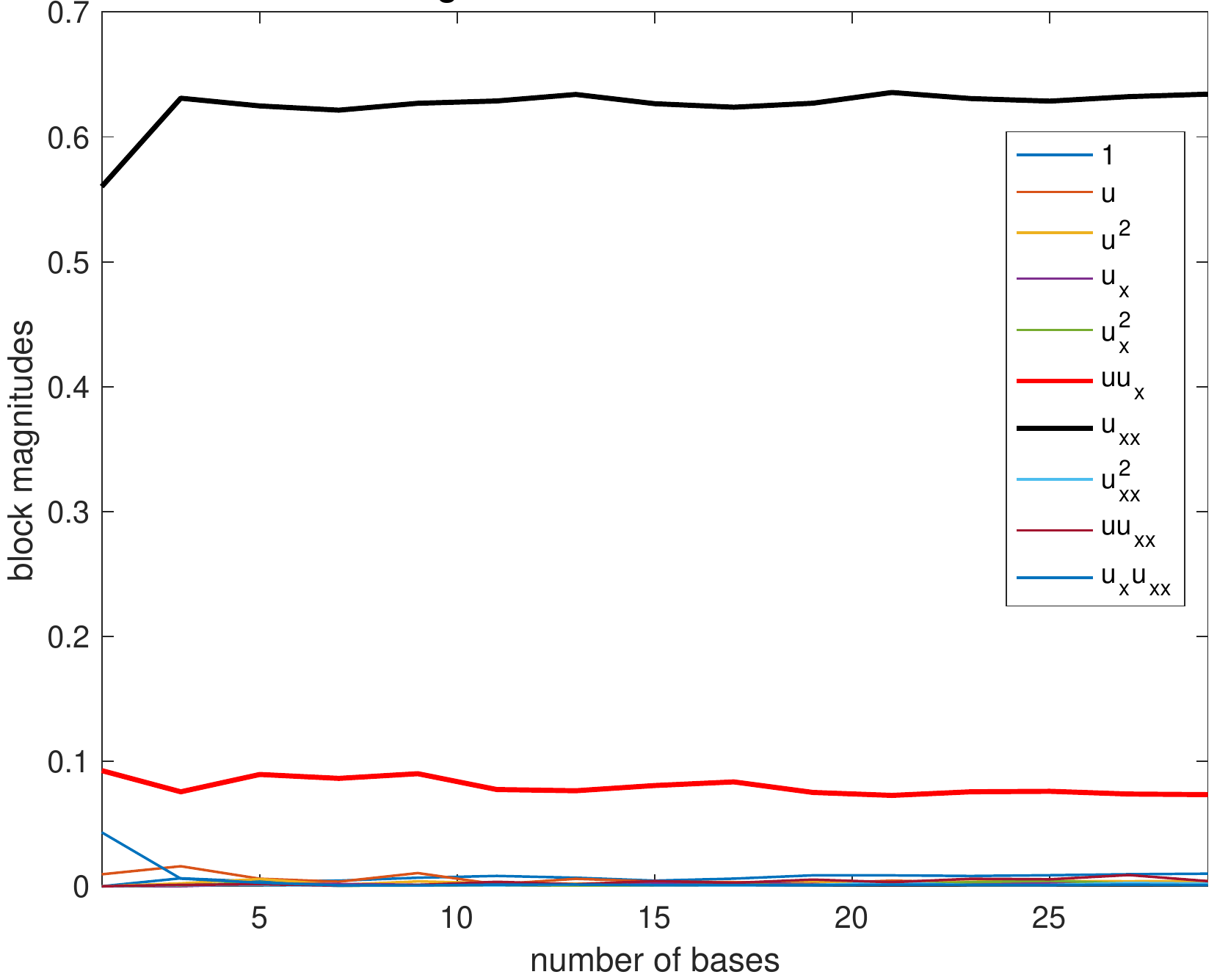} &
\includegraphics[width = 2.05in]{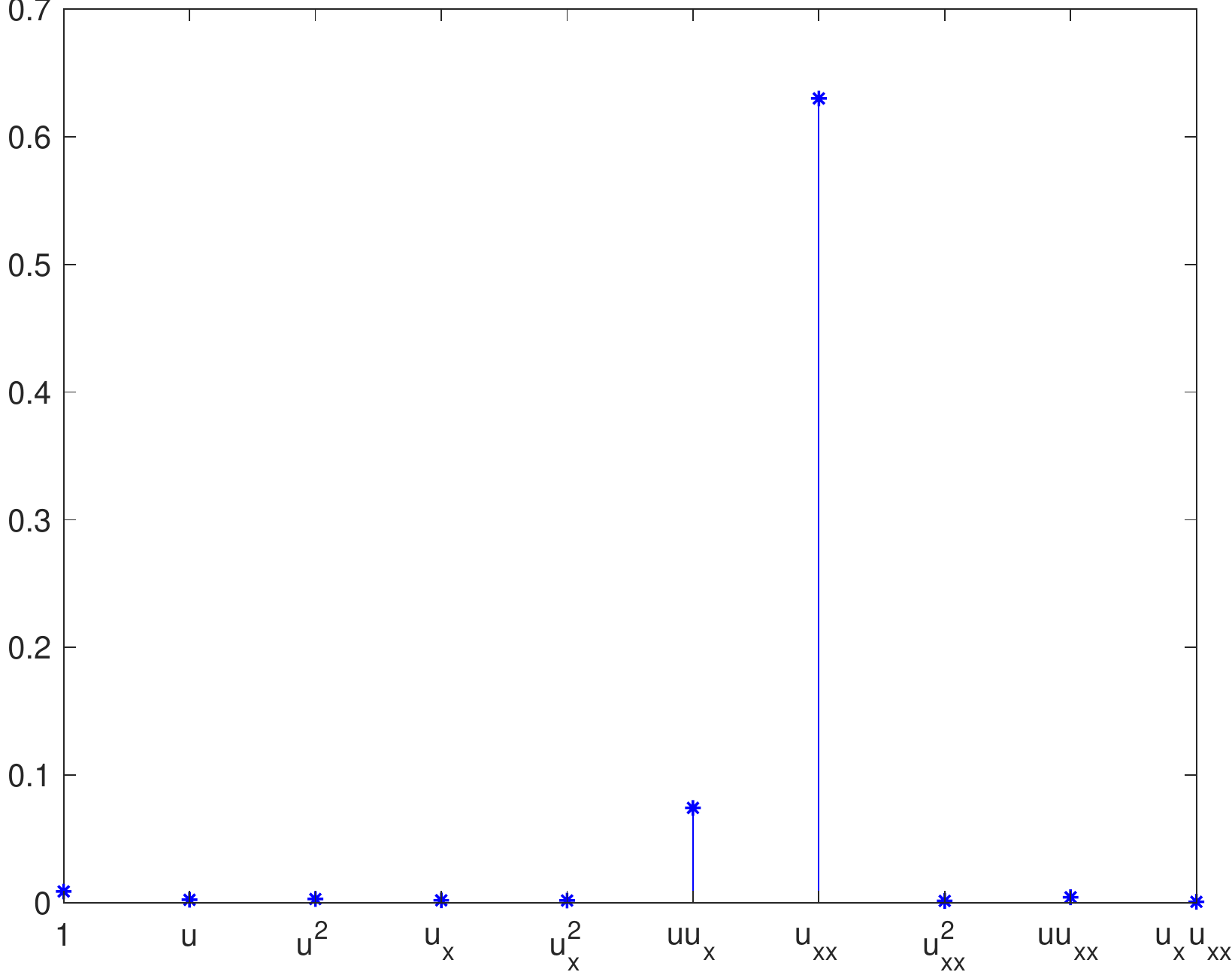}\\
\end{tabular}
\begin{tabular}{ccc}
(c) TEE in $\log_{10}$ scale vs. $L$& 
(d) $\widehat{c}(x)$ vs. $c(x)$  & 
(e)  $\|c(x)-\widehat{c}(x)\|_{L^1}$ vs. $L$ \\
\includegraphics[width = 2.05in]{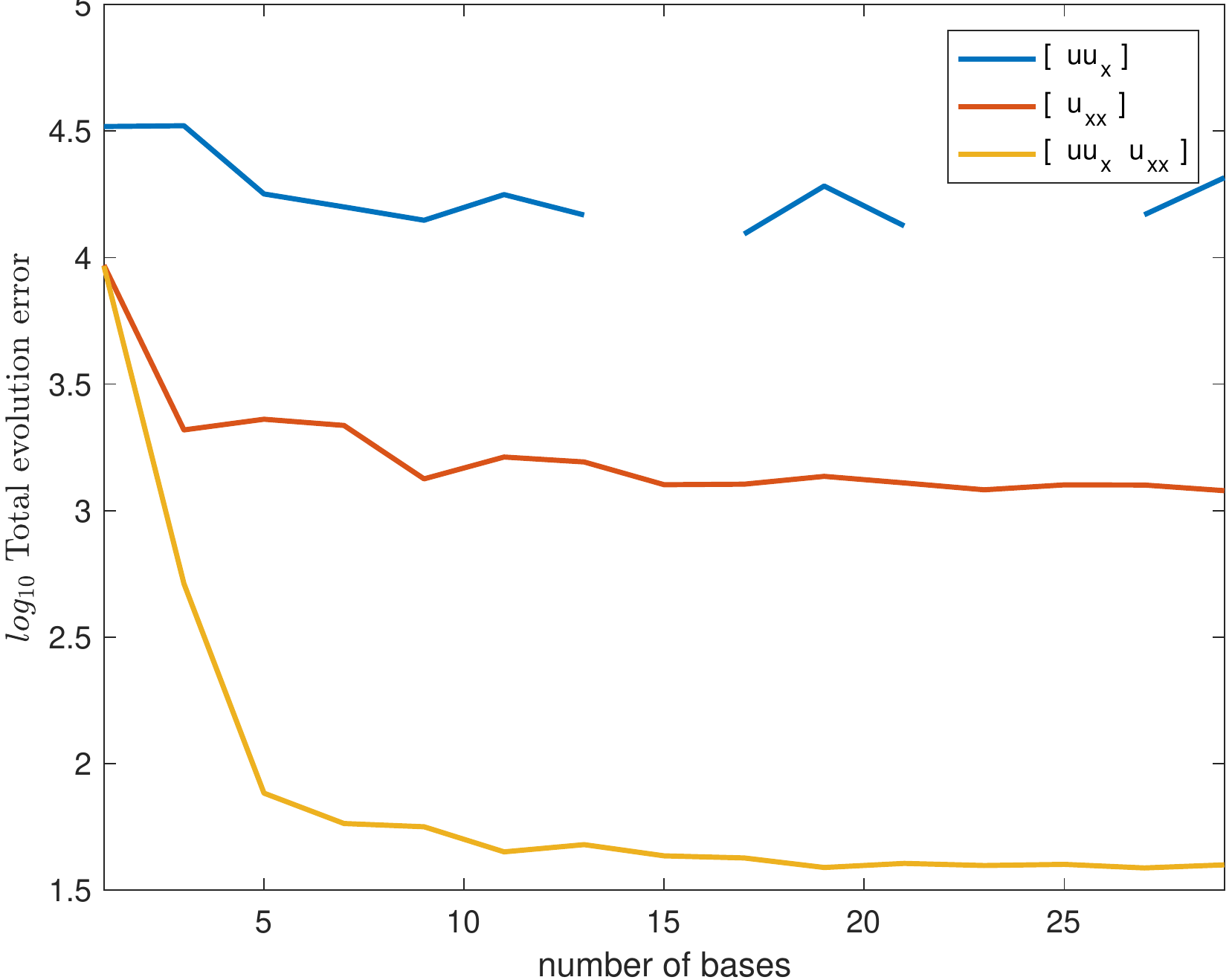} &
\includegraphics[width = 2.05in]{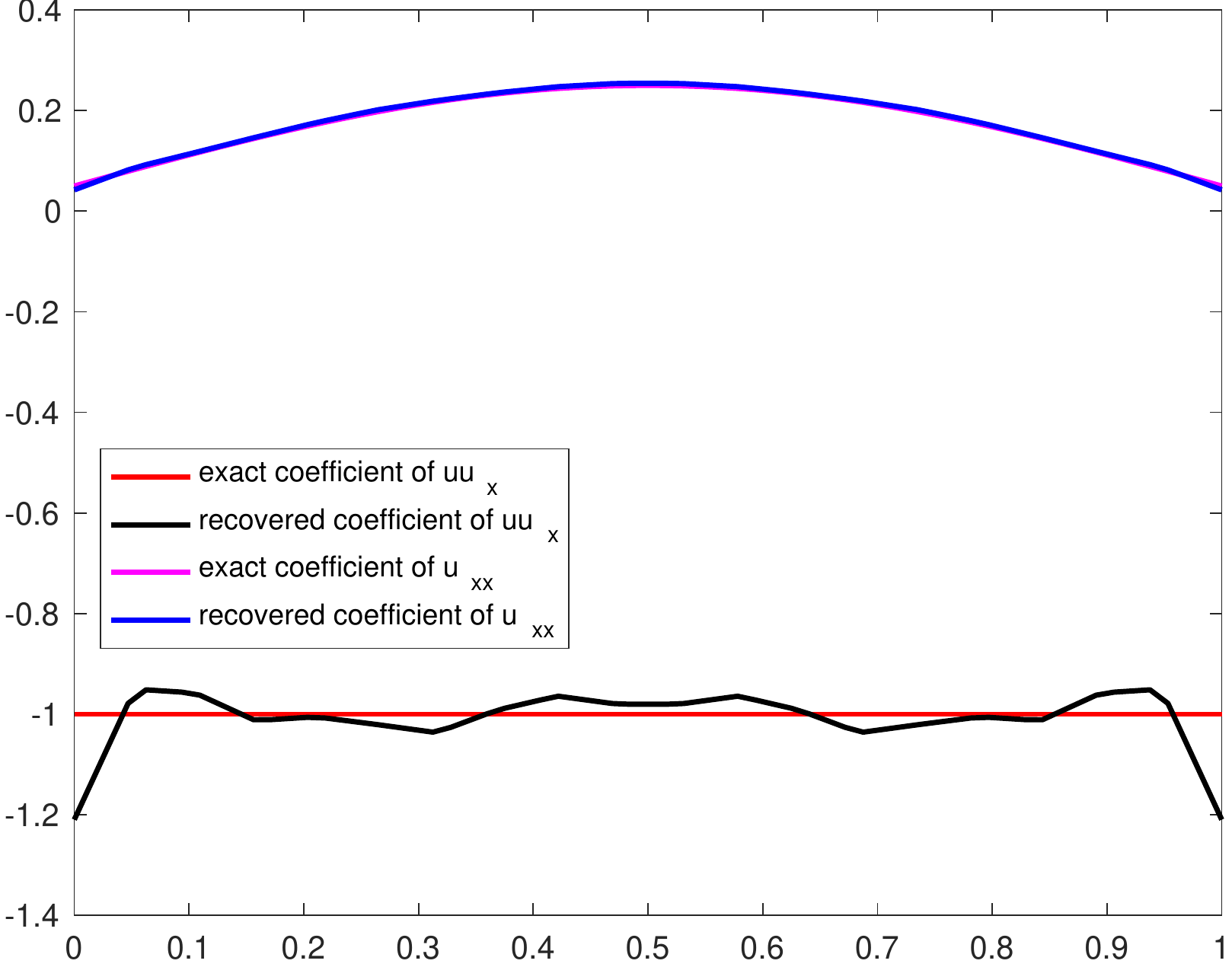} &
\includegraphics[width = 2.05in]{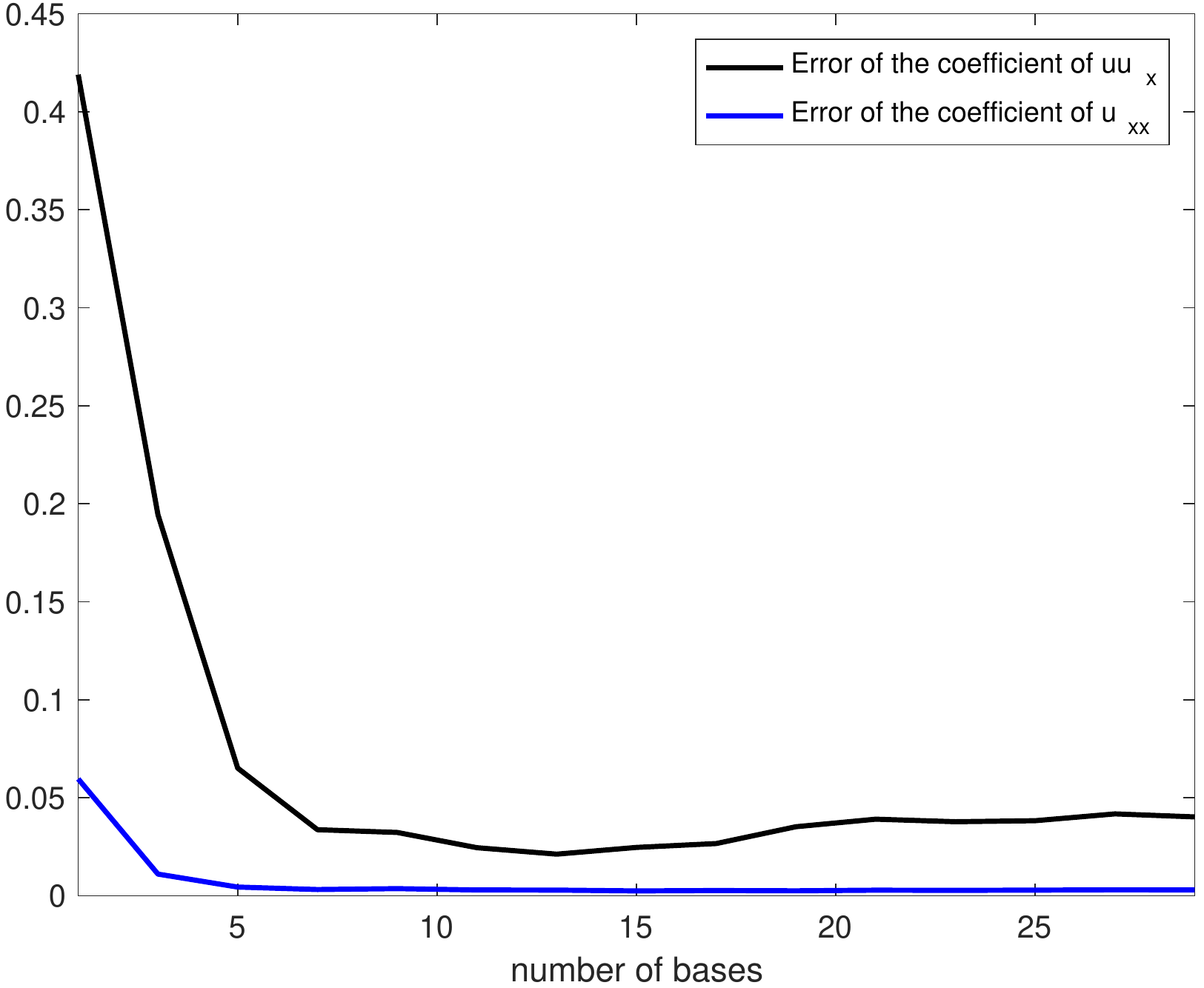}\\
\end{tabular}
\caption{Equation \eqref{E:BurgerDiffVary} where we are a priori given that all coefficients are free to vary with respect to $x$. Data are downsampled by a factor of $4$ in time and space. 
}
\label{Fig-AllVaryDownsample4}
\end{figure}

\section{Concluding remarks}\label{sec:summary}

We proposed a new method to identify PDEs from a given set of time dependent data, with techniques from numerical PDEs and fundamental ideas of convergence.  Assuming that the PDE is spanned by a few active terms in a prescribed dictionary, we used finite differences, such as the ENO scheme, to form an empirical version of the dictionary, and utilize $L^1$ minimization for efficiency.  Time Evolution Error (TEE)  was proposed as an effective tool to pick the correct set of terms.  

Starting with the first set of basic experiments, we considered noisy data, downsampling effect and PDEs with varying coefficients.    By establishing the recovery theory of Lasso for PDE recovery, a new noise-to-signal ratio (\ref{eqnsr}) is proposed, which measures the noise level more accurately in the setting of PDE identification.  We derived an error formula in (\ref{E:enoise}) and analyzed the effects of noise and downsampling.  A new order preserving denoising method called LSMA was proposed in subsection \ref{subsec:lsma} to aid the identification with noisy data.  IDENT can be applied to PDEs with varying coefficients and  BEE procedure helps to stabilize the result and reduce the computational cost.

\appendix

\begin{appendices}

\section{Recovery Theory of Lasso with  a weighted $L^1$ norm}\label{A:recovery}

In the field of compressive sensing, performance guarantees for the recovery of sparse vectors from a small number of noisy linear measurements by Lasso have been established when the sensing matrix satisfies an incoherence property \cite{donoho2001uncertainty} or a restricted isometry property \cite{candes2006robust}. We establish  the incoherence property of Lasso for the case of identifying PDE, where a weighted $L^1$ norm is used. 

Given a sensing matrix $\Phi \in \mathbb{R}^{n \times m}$ and the noisy measurement $$\bb = \Phi \bx^{\opt} + \be$$ where $\bx^\opt$ is $s$-sparse ($\|\bx^\opt\|_0 =s$), the goal is to recover $\bx_{\rm opt}$ in a robust way. Denote the support of $\bx^{\opt}$ by $\Lambda$ and let $\Phi_\Lambda$ be the submatrix of $\Phi$ whose columns are restricted on $\Lambda$. Suppose $\Phi = [\phi[1] \  \phi[2] \ \ldots \phi[m]]$ where all $\phi[j]$'s have unit norm.  Let the mutual coherence of $\Phi$ be 
$$\mu(\Phi) = \max_{j\neq l} |\phi[j]^T \phi[l]|.$$

The principle of Lasso with a weighted $L^1$ norm is to solve 
\begin{equation}
\label{E:QP}
\tag{W-Lasso}
\min_{\bx} \frac 1 2 \|\Phi {\bx}-\bb\|_2^2 + \gamma \|W\bx\|_1
\end{equation}
where $W = {\rm diag}(w_1,w_2,\ldots,w_m), w_j \neq 0, j=1,\ldots,m$ and $\gamma$ is a balancing parameter. Let $w_{\max} = \max_{j}|w_j|$ and $w_{\min} = \min_{j}|w_j|$. Lasso successfully recovers the support of $\bx^\opt$ when $\mu(\Phi)$ is sufficiently small.   The following proposition is a generalization of Theorem 8 in \cite{tropp2006just} from $L^1$ norm regularization to weighted $L^1$ norm regularization.

\begin{proposition}
\label{propLasso}
Suppose the support of $\bx^{\opt}$, denoted by $\Lambda$, contains no more than $s$ indices, $\mu(s-1)<1$ and
$$\frac{\mu s}{1-\mu (s-1)}< \frac{w_{\min}}{w_{\max}}.$$
Let 
\begin{equation}
\label{propLassogamma}
\gamma = \frac{1-\mu(s-1)}{w_{\min}[1 -\mu (s-1)] - w_{\max} \mu s}\|e\|_2^+,
\end{equation}
and $\bx(\gamma)$ be the minimizer of \eqref{E:QP}. Then
\begin{enumerate}

\item[1)] the support of $\bx(\gamma)$ is contained in $\Lambda$;

\item[2)] the distance between $\bx(\gamma)$ and $\bx^{\opt}$ satisfies
\begin{equation}
\label{propLassodist}
\|\bx(\gamma) - \bx^{\opt}\|_\infty \le \frac{w_{\max}}{w_{\min}[1- \mu (s-1)] - w_{\max} \mu s}\|\be\|_2;
\end{equation}

\item[3)] if $$\bx^\opt_{\min} := \min_{j \in \Lambda} |x_j^\opt|  > \frac{w_{\max}}{w_{\min}[1 -\mu (s-1)] - w_{\max} \mu s}\|\be\|_2,$$
then $\supp(\bx(\gamma)) = \Lambda$.

\end{enumerate}
\end{proposition}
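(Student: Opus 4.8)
The plan is to adapt the primal--dual witness (exact recovery) argument behind Theorem 8 of \cite{tropp2006just} to the weighted regularizer, carrying the diagonal matrix $W$ through every estimate. First I would solve the oracle problem restricted to the true support, $\widetilde{\bx} = \arg\min_{\supp(\bx)\subseteq\Lambda} \tfrac12\|\Phi\bx-\bb\|_2^2 + \gamma\|W\bx\|_1$. Because the columns of $\Phi$ have unit norm and pairwise inner products at most $\mu$, the Gram matrix $G = \Phi_\Lambda^T\Phi_\Lambda$ equals $I$ plus an off-diagonal perturbation whose row sums are at most $(s-1)\mu$; Gershgorin (equivalently a Neumann series) then gives $\lambda_{\min}(G)\ge 1-(s-1)\mu>0$ and $\|G^{-1}\|_{\infty\to\infty}\le [1-(s-1)\mu]^{-1}$. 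Hence $\Phi_\Lambda$ has full column rank, the restricted problem is strictly convex with unique minimizer $\widetilde{\bx}$, and its subgradient optimality condition $\Phi_\Lambda^T(\Phi_\Lambda\widetilde{\bx}_\Lambda-\bb)+\gamma W_\Lambda\bg = 0$, with $\|\bg\|_\infty\le 1$, yields the explicit deviation $\widetilde{\bx}_\Lambda-\bx^\opt_\Lambda = G^{-1}[\Phi_\Lambda^T\be-\gamma W_\Lambda\bg]$.

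The heart of the proof is to certify that $\widetilde{\bx}$ is actually the global minimizer, which is claim 1). For this I would check the one remaining subgradient inequality off the support: for each $j\in\Lambda^\complement$ the residual $\mathbf{r}=\Phi_\Lambda\widetilde{\bx}_\Lambda-\bb$ must obey $|\phi[j]^T\mathbf{r}|\le\gamma|w_j|$. Writing $\mathbf{v}_j=\Phi_\Lambda^{+}\phi[j]$ and substituting the deviation identity gives $\phi[j]^T\mathbf{r} = (\Phi_\Lambda\mathbf{v}_j-\phi[j])^T\be - \gamma\mathbf{v}_j^T W_\Lambda\bg$. The first factor is $-P_\Lambda^\perp\phi[j]$, of norm at most $1$, so its contribution is bounded by $\|\be\|_2$, while the coherence estimate $\|\mathbf{v}_j\|_1 = \|\Phi_\Lambda^{+}\phi[j]\|_1\le s\mu/[1-(s-1)\mu]$ bounds the second by $\gamma w_{\max}\,s\mu/[1-(s-1)\mu]$. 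The value of $\gamma$ in \eqref{propLassogamma} is engineered so that this total sits strictly below $\gamma w_{\min}\le\gamma|w_j|$, the strictness being furnished by the $\|\be\|_2^+$ convention; the hypothesis $\mu s/[1-\mu(s-1)]<w_{\min}/w_{\max}$ is precisely what keeps the denominator $D:=w_{\min}[1-\mu(s-1)]-w_{\max}\mu s$ positive. Strict dual feasibility then forces every global minimizer to vanish on $\Lambda^\complement$, which proves 1).

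Claims 2) and 3) follow quickly. Taking $\ell_\infty$ norms in the deviation identity gives $\|\widetilde{\bx}_\Lambda-\bx^\opt_\Lambda\|_\infty \le \|G^{-1}\|_{\infty\to\infty}\big(\|\Phi_\Lambda^T\be\|_\infty+\gamma\|W_\Lambda\bg\|_\infty\big)$, where $\|\Phi_\Lambda^T\be\|_\infty\le\|\be\|_2$ by the unit-norm columns and $\|W_\Lambda\bg\|_\infty\le w_{\max}$; substituting $\gamma$ and collecting over the common denominator $D$ yields the estimate \eqref{propLassodist}, the noise contribution $\|\be\|_2$ being the term that, after the normalizations of Section \ref{ssec:recovery}, reappears as the $\varepsilon/\sqrt{\Delta t\Delta x}$ in the numerator of \eqref{thmdist}. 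For 3), claim 1) places $\supp(\bx(\gamma))\subseteq\Lambda$, and for each $j\in\Lambda$ the reverse triangle inequality gives $|x_j(\gamma)|\ge|x_j^\opt|-\|\bx(\gamma)-\bx^\opt\|_\infty$, so the hypothesis $\bx^\opt_{\min}>\|\bx(\gamma)-\bx^\opt\|_\infty$ keeps each of these entries nonzero and forces $\supp(\bx(\gamma))=\Lambda$.

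I expect the main obstacle to be the off-support certification in the second paragraph: splitting the threshold correctly between a $w_{\max}$-weighted quantity generated on $\Lambda$ and the $w_{\min}$ floor available on $\Lambda^\complement$ is exactly where the generalization past Tropp's unweighted result lives, and the inequality must be made strict so that uniqueness of the global minimizer --- not merely optimality of the oracle solution $\widetilde{\bx}$ --- is secured.
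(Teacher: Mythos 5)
Your proposal is correct and follows essentially the same route as the paper's proof: both adapt Tropp's Theorem 8 by solving the problem restricted to $\Lambda$, extracting the deviation identity from the subgradient condition together with $\|(\Phi_\Lambda^*\Phi_\Lambda)^{-1}\|_{\infty\to\infty}\le[1-(s-1)\mu]^{-1}$, and then certifying global optimality via the coherence bound $\|\Phi_\Lambda^\dagger\phi[\omega]\|_1<\mu s/[1-\mu(s-1)]$ with the $w_{\max}$/$w_{\min}$ split; your off-support KKT dual-feasibility check is just the limiting form of the paper's directional-perturbation inequality $L(\bx^\star+\delta\be_\omega)>L(\bx^\star)$. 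One shared caveat: substituting $\gamma$ into the deviation identity leaves an extra noise contribution (your $\|(\Phi_\Lambda^*\Phi_\Lambda)^{-1}\|_{\infty\to\infty}\,\|\Phi_\Lambda^T\be\|_\infty$ term), so what both arguments actually deliver in part 2) is a numerator of the form $w_{\max}+O(\|\be\|_2)$ over the same denominator rather than the bare $w_{\max}$ of \eqref{propLassodist} --- this is precisely the $w_{\max}+\varepsilon/\sqrt{\Delta t\,\Delta x}$ appearing in \eqref{thmdist}, so it is a wrinkle in the stated proposition rather than a defect of your argument.
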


\begin{proof}
Under the condition $\mu(s-1)<1$, $\Lambda$ indexes a linearly independent collection of columns of $\Phi$. Let $\bx^\star$ be the minimizer of \eqref{E:QP} over all vectors supported on $\Lambda$. A necessary and sufficient condition on such a minimizer is that
\begin{equation}
\label{propLassop1}
\bx^\opt - \bx^\star = \gamma (\Phi_\Lambda^*\Phi_\Lambda)^{-1} \bg - (\Phi_\Lambda^*\Phi_\Lambda)^{-1} \Phi_\Lambda^*\be
\end{equation}
where $\bg \in \partial \|W\bx^\star\|_1$, meaning $g_j = w_j{\rm sign}(x^\star)$ whenever $x^\star_j \neq 0$ and $|g_j| \le w_j$ whenever $x^\star_j = 0$. It follows that $\|\bg\|_\infty \le w_{\max}$ and
\begin{equation}
\label{propLassop2}
\|\bx^\star -\bx^\opt\|_\infty \le \gamma \|(\Phi_\Lambda^*\Phi_\Lambda)^{-1}\|_{\infty,\infty} (w_{\max}+\|\be\|_2). 
\end{equation}

Next we prove $x^\star$ is also the global minimizer of \eqref{E:QP} by demonstrating that the objective function increases when we change any other component of $\bx^\star$. Let  
$$L(\bx) = \frac 1 2 \|\Phi \bx-\bb\|_2^2 + \gamma \|W\bx\|_1.$$
Choose an index $\omega \notin \Lambda$ and let $\delta$ be a nonzero scalar. We will develop a condition which ensures that 
$$L(\bx^\star + \delta \be_\omega) - L(\bx^\star)>0$$
where $\be_\omega$ is the $\omega$th standard basis vector. Notice that
\begin{align*}
L(\bx^\star + \delta \be_\omega) - L(\bx^\star)
& = \frac 1 2\left[ \|\Phi (\bx^\star+\delta \be_\omega)-\bb\|_2^2 - \|\Phi \bx^\star-\bb\|_2^2\right] + \gamma\left(\|W(\bx^\star+\delta \be_\omega)\|_1- \|W\bx\|_1\right)
\\
& =  \frac 1 2 \|\delta \phi[\omega]\|^2 + {\rm Re} \langle \Phi \bx^\star -\bb ,\delta \phi[\omega] \rangle
+ \gamma |w_\omega\delta|
\\
& > {\rm Re} \langle \Phi \bx^\star -\bb ,\delta \phi[\omega] \rangle
+ \gamma |w_\omega\delta|
\\
& \ge \gamma w_{\min}|\delta| -  |\langle \Phi \bx^\star -\Phi \bx^\opt - \be ,\delta \phi[\omega] \rangle| 
\text{ since } \bb = \Phi \bx^\opt +\be
\\
& =  \gamma w_{\min}|\delta| -  |\langle \Phi_\Lambda \bx_{\Lambda}^\star -\Phi_\Lambda \bx_{\Lambda}^\opt - \be ,\delta \phi[\omega] \rangle| 
\\
& \ge \gamma w_{\min}|\delta| - |\langle \Phi_\Lambda (\bx_{\Lambda}^\star - \bx_{\Lambda}^\opt) ,\delta \phi[\omega] \rangle| - |\langle \be,\delta \phi[\omega]\rangle|
\\
& =  \gamma w_{\min}|\delta| - |\langle \gamma \Phi_\Lambda ( \Phi_\Lambda^* \Phi_\Lambda)^{-1}\bg ,\delta \phi[\omega] \rangle| - |\langle \be,\delta \phi[\omega]\rangle| \text{ thanks to \eqref{propLassop1}}
\\
& \ge  \gamma w_{\min}|\delta| - \gamma |\delta|\cdot |\langle  \Phi_\Lambda ( \Phi_\Lambda^* \Phi_\Lambda)^{-1}\bg , \phi[\omega] \rangle| - |\delta|\|\be\|_2
\\
& = \gamma w_{\min}|\delta| - \gamma |\delta|\cdot |\langle    (\Phi_\Lambda^\dagger)^*\bg , \phi[\omega] \rangle| - |\delta|\|\be\|_2
\\
& =  \gamma w_{\min}|\delta| - \gamma |\delta|\cdot |\langle    \bg , \Phi_\Lambda^\dagger \phi[\omega] \rangle| - |\delta|\|\be\|_2 
\\
& \ge \gamma w_{\min}|\delta| - \gamma |\delta|\|\bg\|_\infty \|\Phi_\Lambda^\dagger \phi[\omega] \|_1 - |\delta|\|\be\|_2
\\
& \ge \gamma w_{\min}|\delta| - \gamma |\delta|w_{\max}\max_{\omega \notin \Lambda} \|\Phi_\Lambda^\dagger\phi[\omega]\| - |\delta|\|\be\|_2.
\end{align*}
According to \cite{fuchs2004sparse,tropp2004just}, $\max_{\omega \notin \Lambda} \|\Phi_\Lambda^\dagger\phi[\omega]\| <\frac{\mu s}{1-\mu(s-1)}$.
A sufficient condition to guarantee $L(\bx^\star + \delta \be_\omega) - L(\bx^\star)>0$ is $$\gamma\left(w_{\min} - w_{\max} \frac{\mu s}{1-\mu(s-1)} \right) > \|\be\|_2,$$
which gives rise to \eqref{propLassogamma}. This establishes that $\bx^\star$ is the global minimizer of \eqref{E:QP}. \eqref{propLassodist} is resulted from \eqref{propLassop2} along with $\|(\Phi_\Lambda^*\Phi_\Lambda)^{-1}\|_{\infty,\infty} \le [1-\mu(s-1)]^{-1}$.
\end{proof}

We prove Theorem \ref{thmLasso} based on Proposition \ref{propLasso}.
\begin{proof}[Proof of Theorem \ref{thmLasso}]
Suppose $\hF_{\rm unit}$ is obtained from $\hF$ with the columns normalized to unit $L^2$ norm and let $W \in \mathbb{R}^{N_3 \times N_3}$ be the diagonal matrix with $W_{jj} =\|\hF[j]\|_\infty \|\hF[j]\|_2^{-1}$. 
The Lasso we solve is equivalent to
$$\widehat\by = \arg \min \frac 1 2 \|\widehat\bb - \hF_{\rm unit} \by\| + \lambda \|W \by\|_1$$
where $\bz = W \by$, $\by^{\rm opt}_j  = \ba_j \|\hF[j]\|_2$ and $\be = \widehat\bb - \hF_{\rm unit} \by^{\rm opt} $. Then we apply Proposition \ref{propLasso}. The choice of balancing parameters in \eqref{propLassogamma} suggests 
$$\lambda = \frac{1-\mu(s-1)}{\min_j\frac{\|\hF[j]\|_\infty}{\|\hF[j]\|_2}[1-\mu(s-1)] -\max_j\frac{\|\hF[j]\|_\infty}{\|\hF[j]\|_2}\mu s}\|\be\|_2^+,$$
which gives rise to \eqref{thmlambda}. The error bound in \eqref{propLassodist} gives
$$\|\widehat\by - \by^{\rm opt}\|_\infty  \le  \frac{(\max_j\|\hF[j]\|_\infty \|\hF[j]\|_2^{-1}+\|\be\|_2)}{\min_j \|\hF[j]\|_\infty \|\hF[j]\|_2^{-1} [1- \mu (s-1)] - \max_j \|\hF[j]\|_\infty \|\hF[j]\|_2^{-1} \mu s}\|\be\|_2$$
which implies
$$\max_j \|\hF[j]\|_{L^2} \left|\|\hF[j]\|_\infty^{-1}\widehat{\ba}_{\text{Lasso}}(\lambda)_j-\ba_j\right| 
\le \frac{w_{\max}+\varepsilon/\sqrt{\Delta t \Delta x}}{w_{\min}[1-\mu(s-1)] - w_{\max}\mu s } \varepsilon,$$
which yields \eqref{thmdist}.
\end{proof}

\end{appendices}

\bibliographystyle{plain}
\bibliography{cite_findingPDE}  

\begin{thebibliography}{10}

\bibitem{barth1990higher}
Timothy Barth and Paul Frederickson.
\newblock Higher order solution of the {E}uler equations on unstructured grids
  using quadratic reconstruction.
\newblock In {\em 28th aerospace sciences meeting}, page~13, 1990.

\bibitem{bongard2007automated}
Josh Bongard and Hod Lipson.
\newblock Automated reverse engineering of nonlinear dynamical systems.
\newblock {\em Proceedings of the National Academy of Sciences},
  104(24):9943--9948, 2007.

\bibitem{bongini2017inferring}
Mattia Bongini, Massimo Fornasier, Markus Hansen, and Mauro Maggioni.
\newblock Inferring interaction rules from observations of evolutive systems i:
  The variational approach.
\newblock {\em Mathematical Models and Methods in Applied Sciences},
  27(05):909--951, 2017.

\bibitem{boyd2011distributed}
Stephen Boyd, Neal Parikh, Eric Chu, Borja Peleato, Jonathan Eckstein, et~al.
\newblock Distributed optimization and statistical learning via the alternating
  direction method of multipliers.
\newblock {\em Foundations and Trends{\textregistered} in Machine learning},
  3(1):1--122, 2011.

\bibitem{brunton2016discovering}
Steven~L Brunton, Joshua~L Proctor, and J~Nathan Kutz.
\newblock Discovering governing equations from data by sparse identification of
  nonlinear dynamical systems.
\newblock {\em Proceedings of the National Academy of Sciences},
  113(15):3932--3937, 2016.

\bibitem{candes2006robust}
Emmanuel~J Cand{\`e}s, Justin Romberg, and Terence Tao.
\newblock Robust uncertainty principles: Exact signal reconstruction from
  highly incomplete frequency information.
\newblock {\em IEEE Transactions on Information Theory}, 52(2):489--509, 2006.

\bibitem{donoho2001uncertainty}
David~L Donoho and Xiaoming Huo.
\newblock Uncertainty principles and ideal atomic decomposition.
\newblock {\em IEEE transactions on Information Theory}, 47(7):2845--2862,
  2001.

\bibitem{fannjiang2012coherence}
Albert Fannjiang and Wenjing Liao.
\newblock Coherence pattern--guided compressive sensing with unresolved grids.
\newblock {\em SIAM Journal on Imaging Sciences}, 5(1):179--202, 2012.

\bibitem{fuchs2004sparse}
J-J Fuchs.
\newblock On sparse representations in arbitrary redundant bases.
\newblock {\em IEEE transactions on Information theory}, 50(6):1341--1344,
  2004.

\bibitem{ENO87}
Ami Harten, Bjorn Engquist, Stanley Osher, and Sukumar~R. Chakravarthy.
\newblock Uniformly high order accurate essentially non-oscillatory schemes,
  iii.
\newblock {\em Journal of Computational Physics}, 71(2):231--303, 1987.

\bibitem{hu1999weighted}
Changqing Hu and Chi-Wang Shu.
\newblock Weighted essentially non-oscillatory schemes on triangular meshes.
\newblock {\em Journal of Computational Physics}, 150(1):97--127, 1999.

\bibitem{kaiser2018sparse}
Eurika Kaiser, J~Nathan Kutz, and Steven~L Brunton.
\newblock Sparse identification of nonlinear dynamics for model predictive
  control in the low-data limit.
\newblock {\em Proceedings of the Royal Society A}, 474(2219):20180335, 2018.

\bibitem{khoo2018switchnet}
Yuehaw Khoo and Lexing Ying.
\newblock Switchnet: a neural network model for forward and inverse scattering
  problems.
\newblock {\em arXiv preprint arXiv:1810.09675}, 2018.

\bibitem{LSTZ07}
Yingjie Liu, Chi-Wang Shu, Eitan Tadmor, and Mengping Zhang.
\newblock Central discontinuous {G}alerkin methods on overlapping cells with a
  non-oscillatory hierarchical reconstruction.
\newblock {\em SIAM J. Numer. Anal.}, 45:2442--2467, 2007.

\bibitem{loiseau2018constrained}
Jean-Christophe Loiseau and Steven~L Brunton.
\newblock Constrained sparse galerkin regression.
\newblock {\em Journal of Fluid Mechanics}, 838:42--67, 2018.

\bibitem{long2017pde}
Zichao Long, Yiping Lu, Xianzhong Ma, and Bin Dong.
\newblock {PDE}-{N}et: Learning {PDE}s from data.
\newblock {\em arXiv preprint arXiv:1710.09668}, 2017.

\bibitem{lu2018nonparametric}
Fei Lu, Ming Zhong, Sui Tang, and Mauro Maggioni.
\newblock Nonparametric inference of interaction laws in systems of agents from
  trajectory data.
\newblock {\em arXiv preprint arXiv:1812.06003}, 2018.

\bibitem{lusch2018deep}
Bethany Lusch, J~Nathan Kutz, and Steven~L Brunton.
\newblock Deep learning for universal linear embeddings of nonlinear dynamics.
\newblock {\em Nature communications}, 9(1):4950, 2018.

\bibitem{mangan2017model}
Niall~M Mangan, J~Nathan Kutz, Steven~L Brunton, and Joshua~L Proctor.
\newblock Model selection for dynamical systems via sparse regression and
  information criteria.
\newblock {\em Proceedings of the Royal Society A}, 473(2204):20170009, 2017.

\bibitem{qin2018data}
Tong Qin, Kailiang Wu, and Dongbin Xiu.
\newblock Data driven governing equations approximation using deep neural
  networks.
\newblock {\em arXiv preprint arXiv:1811.05537}, 2018.

\bibitem{raissi2018deep}
Maziar Raissi.
\newblock Deep hidden physics models: Deep learning of nonlinear partial
  differential equations.
\newblock {\em arXiv preprint arXiv:1801.06637}, 2018.

\bibitem{raissi2018hidden}
Maziar Raissi and George~Em Karniadakis.
\newblock Hidden physics models: Machine learning of nonlinear partial
  differential equations.
\newblock {\em Journal of Computational Physics}, 357:125--141, 2018.

\bibitem{raissi2017physics}
Maziar Raissi, Paris Perdikaris, and George~Em Karniadakis.
\newblock Physics informed deep learning (part i): Data-driven solutions of
  nonlinear partial differential equations.
\newblock {\em arXiv preprint arXiv:1711.10561}, 2017.

\bibitem{rudy2017data}
Samuel~H Rudy, Steven~L Brunton, Joshua~L Proctor, and J~Nathan Kutz.
\newblock Data-driven discovery of partial differential equations.
\newblock {\em Science Advances}, 3(4):e1602614, 2017.

\bibitem{schaeffer2017learning}
Hayden Schaeffer.
\newblock Learning partial differential equations via data discovery and sparse
  optimization.
\newblock {\em Proceedings of the Royal Society A: Mathematical, Physical and
  Engineering Sciences}, 473(2197):20160446, 2017.

\bibitem{schaeffer2013sparse}
Hayden Schaeffer, Russel Caflisch, Cory~D Hauck, and Stanley Osher.
\newblock Sparse dynamics for partial differential equations.
\newblock {\em Proceedings of the National Academy of Sciences},
  110(17):6634--6639, 2013.

\bibitem{schaeffer2018extracting}
Hayden Schaeffer, Giang Tran, and Rachel Ward.
\newblock Extracting sparse high-dimensional dynamics from limited data.
\newblock {\em SIAM Journal on Applied Mathematics}, 78(6):3279--3295, 2018.

\bibitem{schmidt2009distilling}
Michael Schmidt and Hod Lipson.
\newblock Distilling free-form natural laws from experimental data.
\newblock {\em Science}, 324(5923):81--85, 2009.

\bibitem{tibshirani1996regression}
Robert Tibshirani.
\newblock Regression shrinkage and selection via the lasso.
\newblock {\em Journal of the Royal Statistical Society. Series B
  (Methodological)}, pages 267--288, 1996.

\bibitem{tran2017exact}
Giang Tran and Rachel Ward.
\newblock Exact recovery of chaotic systems from highly corrupted data.
\newblock {\em Multiscale Modeling $\&$ Simulation}, 15(3):1108--1129, 2017.

\bibitem{tropp2004just}
Joel~A Tropp.
\newblock Just relax: Convex programming methods for subset selection and
  sparse approximation.
\newblock {\em ICES report}, 404, 2004.

\bibitem{tropp2006just}
Joel~A Tropp.
\newblock Just relax: Convex programming methods for identifying sparse signals
  in noise.
\newblock {\em IEEE Transactions on Information Theory}, 52(3):1030--1051,
  2006.

\bibitem{yuan2006model}
Ming Yuan and Yi~Lin.
\newblock Model selection and estimation in regression with grouped variables.
\newblock {\em Journal of the Royal Statistical Society: Series B (Statistical
  Methodology)}, 68(1):49--67, 2006.

\bibitem{zhang2018robust}
Sheng Zhang and Guang Lin.
\newblock Robust data-driven discovery of governing physical laws with error
  bars.
\newblock {\em Proceedings of the Royal Society A}, 474(2217):20180305, 2018.

\end{thebibliography}

\end{document}